\title{  Local Euler obstructions of toric varieties}    
\author{Bernt Ivar Utst$\o$l N$\o$dland} \thanks{Author affiliation: Department of Mathematics, University of Oslo, Moltke Moes vei 35,
0851 Oslo, Norway Email: berntin@math.uio.no }
\date{\today}
\begin{document}

\begin{abstract}
We use Matsui and Takeuchi's formula for toric $A$-discriminants to give algorithms for computing local Euler obstructions and dual degrees of toric surfaces and $3$-folds. In particular, we consider  weighted projective spaces. As an application we give counterexamples to a conjecture by Matsui and Takeuchi. As another application we recover the well-known fact that the only defective normal toric surfaces are cones.
\end{abstract}

\maketitle

\smallskip
\noindent \textbf{Keywords.} toric varieties,
    weighted projective spaces,
singularities,
local Euler obstruction,
lattice polytopes,
continued fractions

\section{Introduction}

\footnotetext{\textit{Mathematics Subject Classification}: \textup{Primary:14M25 Secondary:52B20,14B05,14C17}}

The local Euler obstruction was used by MacPherson \cite{MacPherson} in his construction of Chern classes for singular varieties. For a variety $X$ the local Euler obstruction is a constructible function $\Eu: X \to \Z$ which takes the value $1$ at smooth points of $X$. It is related to the Chern--Mather class and to the Chern--Schwartz--MacPherson class of $X$ (see Remark \ref{chclass}).

Several equivalent definitions of the local Euler obstruction have been given, such as Kashiwara's definition of the local characteristic for a germ of an irreducible analytic space \cite{Kashiwara}. The first  algebraic formula was given by Gonz\'alez-Sprinberg and Verdier \cite{gonz}. Matsui and Takeuchi use a topological definition \cite{MatTak}, which defines the local Euler obstruction of $X$ inductively using the Whitney stratification of $X$. They use this definition to prove a formula for the local Euler obstruction on a (not necessarily normal) toric variety $X$. In this article we will apply this formula to compute the local Euler obstructions of toric varieties of dimension $\leq 3$.

For a normal toric surface $X$, we have that $X$ is smooth if and only if $\Eu(X)=\mathbbm{1}_X$ \cite[Cor 5.7]{MatTak}. \cite{MatTak} conjecture that the corresponding statement should also hold for a higher dimensional normal and projective toric variety. As an application we present counterexamples to this conjecture.

A motivation for studying Euler obstructions comes from formulas for the degrees of dual varieties. Given a projective variety variety $X \subset \p^N$, its dual variety $X^\vee \subset {\p^N}^\vee$ is the closure of the set of hyperplanes $H \in {\p^N}^\vee$ such that there exists a smooth point $x \in X$ with $T_xX \subset H$. Generally  $X^\vee$ will be a hypersurface in ${\p^N}^\vee$. Finding its equation is usually very difficult, but there are results which give the degree. Gelfand, Kapranov and Zelevinsky \cite{GKZ} proved a combinatorial formula for the degree of the dual variety of an embedded smooth toric variety. Matsui and Takeuchi generalized this formula to singular toric varieties, by weighting the terms by the local Euler obstruction. We will use this to describe algorithms to compute the degree of the dual variety of some toric varieties, in particular weighted projective spaces  of dimension $\leq 3$.

There has been  recent interest in the local Euler obstruction. Aluffi  studied Chern--Mather and Chern--Schwartz--MacPherson classes in \cite{Aluffi}. Helmer and Sturmfels studied polar degrees and the local Euler obstruction in \cite{SturmHelm} related to the problem of finding the Euclidean distance degree of a variety.  This problem is closely related to the contents of the current paper, since the Euclidean distance degree is expressible in terms of polar degrees, which in turn is expressible in terms of Matsui and Takeuchi's formulas involving the local Euler obstruction. In particular Helmer and Sturmfels study codimension one toric varieties \cite[Thm 3.7]{SturmHelm}, and also they briefly study surfaces.

In Section 2 we define the local Euler obstruction. We recall some basic facts about toric varieties.

In Section 3 we present Matsui and Takeuchi's method for computing the local Euler obstruction of toric varieties and the degree of dual varieties.

In Section 4 we introduce our main examples of study, the weigthed projective spaces. We describe them via toric geometry.

In Section 5 we follow Chapter 5 of \cite{Mork} and apply the theory to  toric surfaces. This  relates to Hirzebruch--Jung continued fractions and the minimal resolution of singularities. We then do explicit computations for weighted projective planes.

In Section 6 we consider the local Euler obstruction of toric 3-folds. We prove that for a toric 3-fold $X_{P \cap M}$ with isolated singularities, the local Euler obstruction is always greater than or equal to $1$. We find counterexamples to a conjecture by Matsui and Takeuchi \cite[p.2063]{MatTak}.

In Section 7 we apply the above to describe which toric surfaces are dual defective, and to say something about which $3$-dimensional weighted projective spaces are dual defective. 

In the appendix we collect some computations of the local Euler obstruction and degrees of dual varieties for some weighted projective spaces.

\section{The local Euler obstruction} \label{secEu}

Given a complex projective variety $X$ of dimension $d$, consider the (generalized) Grassmann variety ${\rm Grass}_d(\Omega_X^1)$ representing locally free rank $d$ quotients of $\Omega_X^1$. The Nash blowup $\tilde X$ of $X$ is the closure of the image of the morphism $X_{\rm sm}\to {\rm Grass}_d(\Omega_X^1)$. Let $\pi\colon \tilde X\to X$ denote the projection. The Nash sheaf $\tilde \Omega$ is the restriction to $\tilde X$ of the tautological rank $d$ sheaf on ${\rm Grass}_d(\Omega_X^1)$. There is a surjection $\pi^\ast \Omega^1_X \to \tilde{\Omega}$, and the Nash blowup is universal with respect to birational morphisms $f: Y \to X$ such that there is a locally free sheaf $\mathcal{F}$ of rank $d$ on $Y$ and a surjection $f^\ast \Omega^1_X \to \mathcal{F}$. Let $\tilde T$ denote the dual of $\tilde \Omega$.

The local Euler obstruction of a point $x \in X$ is the integer
\[ \Eu(x) = \int_{\pi^{-1}(x)} c(\tilde{T}|_{\pi^{-1}(x)}) \cap s(\pi^{-1}(x),\tilde{X}). \]
On the smooth locus of a variety the local Euler obstruction takes the value  $1$. It is a local invariant, thus we can compute it on an open affine cover. 

This is the usual algebraic definition, used by amongst others \cite[Ex. 4.2.9]{Fulton2}. When the ambient variety is clear we will simply write $\Eu$ for the local Euler obstruction, however if there are different ambient varieties  we sometimes write $\Eu_X$ for the local Euler obstruction on $X$.

\begin{remark} \label{chclass}
 The Chern--Mather class $c^M(X)$ of a variety $X$ is defined by
\[ c^M (X) = \pi_\ast ( c(\tilde{T}) \cap [\tilde{X}]). \]
There is an isomorphism $T$ from cycles on $X$ to constructible functions on $X$  given by $\sum n_i [V_i] (p) \mapsto \sum n_i \Eu_{V_i}(p)$. Letting $c_\ast$ be $c^M \circ T^{-1}$, we have that $c_\ast$ is the unique natural transformation from constructible functions on $X$ to the homology of $X$ such that on a non-singular $X$ we have that $c^{SM} (X)$ is the Poincare dual of the total Chern class of $X$ \cite[Thm 1]{MacPherson}. The Chern-Schwartz-MacPherson class $c^{SM} (X)$ is defined as $c_\ast ( \mathbbm{1}_X)$.
\end{remark}

\subsection{Definitions and notation for toric varieties}

We shall use the notation and definitions from \cite{Cox} for toric varieties. Let $T$ be the torus $(\C^\ast)^n$ and let M denote its character lattice $\Hom(T,\C^\ast) \simeq \Z^n$. The dual $\Hom_\Z(M,\Z)$ of $M$ we denote by $N$. Any normal toric variety is of the form $X_\Sigma$ for a fan $\Sigma \subset N_\R$, and has the open affine cover $ \{ U_\sigma | \sigma \in \Sigma \}$.

We will sometimes be interested in toric varieties which are not normal: For a finite set of lattice points $A \subset M$, we can associate the toric variety $X_A \subset \p^{\# A-1}$ by mapping the torus via the characters corresponding to the lattice points in $A$ and taking the Zariski closure. These varieties are not necessarily normal.

For a subset $S \subset M_\R$ we denote by $\Conv(S)$ the convex hull of the points of $S$. Setting $P=\Conv(A)$, we get a (possibly different) embedding $X_{P \cap M} \subset \p^{\# P \cap M -1}$. The variety $X_A$ is also the image of the projection from  $\p^{\# P \cap M -1}$ to  $\p^{\# A-1}$ given by forgetting the coordinates corresponding to $P \cap M \setminus A$.

To a lattice polytope $P$ we can also associate the normal toric variety $X_P$ which equals $X_{kP \cap M}$ for any $k \in \mathbb{N}$ such that $kP$ is very ample (this is equivalent to a certain divisor on $X_{kP \cap M}$ being very ample), thus it is independent of any specific embedding and not necessarily isomorphic to $X_{P \cap M}$. We have that $X_P$ is isomorphic to $X_{\Sigma_P}$, the toric variety associated to the normal fan $\Sigma_P \subset N_\R$ of P. 

If the polytope $P$ is itself very ample, we will sometimes, by abuse of notation, identify the abstract variety $X_P$ with the embedded variety $X_{P \cap M}$. For instance all $2$-dimensional polytopes are very ample.

\section{The local Euler obstruction of toric varieties}

Consider a toric variety $X_A$ associated to a finite set $A$ in $M \simeq \Z^n$.  We will use the formula for the local Euler obstruction of toric varieties proved in \cite[Ch. 4]{MatTak}. It is proved using an equivalent topological definition of the Euler obstruction, defined by induction on the codimension of the strata of a Whitney stratification of the variety. 

\begin{remark}
One can quite explicitly describe both the Nash blowup of a toric variety, and its normalization as a toric variety, see \cite{nash},\cite{PerezTeissier}. It would be interesting to prove Matsui and Takeuchi's formula for the local Euler obstruction directly from the algebraic definition, for instance if one could  describe the Nash sheaf as a module over the Cox ring of $X_A$.
\end{remark}

Let $P$ be the convex hull of $A$. We may assume this has dimension $n$. Then $P$ is a lattice polytope in $M$. For a toric variety $X_A$ there is a one-to-one correspondence bewtween faces of $P$ and orbits of $X_A$ by \cite[Prop. 1.9]{GKZ}. The local Euler obstruction is constant on each orbit, hence for a face $\Delta \preceq P$ we can denote by $\Eu(\Delta)$ the common value of the local Euler obstruction on the orbit corresponding to $\Delta$. Matsui and Takeuchi describe the Euler obstruction combinatorially by induction on the codimension of the faces of $P$.

For a face $\Delta$ of $P$, let $L(\Delta)$ be the smallest linear subspace in $M_{\mathbb{R}}$ containing $\Delta$. The dimension of $L(\Delta)$ is equal to $\dim \Delta$. We can also associate a lattice to $\Delta$: $M_\Delta$ is the lattice generated by $A \cap \Delta$ in $L(\Delta)$.

Given faces $\Delta_\alpha$ and $\Delta_\beta$ of $P$ such that $\Delta_\beta \preceq \Delta_\alpha$, we can associate a lattice $M_{\alpha,\beta} \defeq M_\alpha \cap L(\Delta_\beta)$. We have that $M_\beta \subseteq M_{\alpha,\beta}$, but they are not necessarily equal (see Examples \ref{ind1} and \ref{ind2}). They are however both of maximal rank in $L(\Delta_\beta)$ which motivates the following definition:

\begin{definition}
Given faces $\Delta_\alpha$ and $\Delta_\beta$ of $P$ such that $\Delta_\beta \preceq \Delta_\alpha$, we define the index $i(\Delta_\alpha, \Delta_\beta)$ to be $[M_{\alpha,\beta} : M_\beta]$.
\end{definition}

For two faces $\Delta_\alpha$ and $\Delta_\beta$ of $P$ such that $\Delta_\beta \preceq \Delta_\alpha$ we may, after a translation, assume that $0$ is a vertex of $\Delta_\beta$. We denote by $S_\alpha$ the semigroup generated by $A \cap \Delta_\alpha$ in $M_\alpha$. Let $S_{\alpha,\beta}$ denote the image of $S_\alpha$ in the quotient lattice $M_\alpha/ M_{\alpha,\beta}$.

\begin{definition} \label{RSV}
Given faces $\Delta_\alpha$ and $\Delta_\beta$ of $P$ such that $\Delta_\beta \preceq \Delta_\alpha$, we define the normalized relative subdiagram volume $RSV_{\mathbb{Z}}(\Delta_\alpha \Delta_\beta)$ of $\Delta_\alpha$ along $\Delta_\beta$ by
\[ \RSV_{\mathbb{Z}}(\Delta_\alpha,\Delta_\beta) =  \Vol ( S_{\alpha,\beta} \setminus \Theta_{\alpha,\beta}), \]
where $\Theta_{\alpha,\beta}$ is the convex hull of $S_\alpha/\Delta_\beta \cap  M_\alpha/ M_{\alpha,\beta} \setminus \{ 0 \}$ in $(M_\alpha/ M_{\alpha,\beta})_\R $. The volume is normalized with respect to the $(\dim \Delta_\alpha - \dim \Delta_\beta)$-dimensional lattice $M_\alpha/ M_{\alpha,\beta}$. If $\Delta_\alpha = \Delta_\beta$ we set $\RSV_{\mathbb{Z}}(\Delta_\alpha,\Delta_\beta) = 1$.
\end{definition}

\begin{corollary} \label{euler} \cite[Thm 4.7]{MatTak}\label{Euler}
The local Euler obstruction of $X_A$ is described as follows: The value $\Eu(\Delta_\beta)$ for a face $\Delta_\beta$ of $P$ is determined by induction on the codimension of the faces of $P$ by the following:
\[\Eu(P) = 1, \]
\[\Eu(\Delta_\beta) = \sum_{\Delta_\beta \lneqq \Delta_\alpha} (-1)^{\dim \Delta_\alpha -\dim \Delta_\beta -1} i(\Delta_\alpha, \Delta_\beta) \RSV_{\mathbb{Z}}(\Delta_\alpha,\Delta_\beta) \Eu(\Delta_\alpha) . \]
\end{corollary}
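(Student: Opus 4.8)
This is \cite[Thm 4.7]{MatTak}; I outline how one would argue. Recall first that $\Eu$ is a constructible function, that the torus $T$ acts on $X_A$ with orbits $O_\Delta$ in bijection with the faces $\Delta\preceq P$ by \cite[Prop. 1.9]{GKZ}, and that, $T$ being connected and acting transitively on each orbit, $\Eu$ is constant on $O_\Delta$; write $\Eu(\Delta)$ for this value. The orbit decomposition $X_A=\bigsqcup_{\Delta\preceq P}O_\Delta$ is a Whitney stratification, and Matsui and Takeuchi's topological, inductive definition of $\Eu$ then gives, for $x\in O_\beta$,
\[
\Eu(x)=\sum_{\Delta_\beta\preceq\Delta_\alpha}\chi\big(O_\alpha\cap N\cap B_\varepsilon\cap g^{-1}(\delta)\big)\,\Eu(\Delta_\alpha),
\]
where $N$ is a generic complex slice through $x$ of dimension $\operatorname{codim}O_\beta=n-\dim\Delta_\beta$, $g$ is a generic affine linear form with $g(x)=0$, $B_\varepsilon$ is a small ball around $x$, and $0<|\delta|\ll\varepsilon$; the $\alpha=\beta$ summand vanishes since $O_\beta\cap N=\{x\}$ and $g(x)\ne\delta$. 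As $\operatorname{codim}\Delta_\alpha<\operatorname{codim}\Delta_\beta$ whenever $\Delta_\beta\lneqq\Delta_\alpha$, this is a recursion on $\operatorname{codim}$ with base case $\Eu(P)=1$ (the open orbit is the smooth torus), exactly as stated. Hence the theorem reduces to the combinatorial identity
\[
\chi\big(O_\alpha\cap N\cap B_\varepsilon\cap g^{-1}(\delta)\big)=(-1)^{\dim\Delta_\alpha-\dim\Delta_\beta-1}\,i(\Delta_\alpha,\Delta_\beta)\,\RSV_{\mathbb Z}(\Delta_\alpha,\Delta_\beta).
\]

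To prove this identity I would translate so that $0$ is a vertex of $\Delta_\beta$ and proceed in three steps. First, describe the germ of $X_A$ transverse to $O_\beta$ at $x$ toric-combinatorially: $N\cap X_A$ is, analytically, an affine toric variety whose torus orbits are the traces $O_\alpha\cap N$ for $\Delta_\alpha\succeq\Delta_\beta$, each a torus of dimension $\dim\Delta_\alpha-\dim\Delta_\beta$ with character lattice $M_\alpha/M_{\alpha,\beta}$, glued according to the star of $\Delta_\beta$ in the face poset of $P$ and carrying the semigroup $S_{\alpha,\beta}$. Second, cut by $g$ and intersect with $B_\varepsilon$: the resulting piece lying in $O_\alpha$ is a generic affine hypersurface $\{\,g|_{O_\alpha\cap N}=\delta\,\}$ inside the torus $O_\alpha\cap N\cong(\mathbb C^\ast)^{\dim\Delta_\alpha-\dim\Delta_\beta}$. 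Third, compute its Euler characteristic by Bernstein--Kushnirenko--Khovanskii theory: since $g-\delta$ has a nonzero constant term, the Newton polytope of $g|_{O_\alpha\cap N}-\delta$ --- once one accounts both for which monomials are actually present in the embedding given by $A$ and for restriction to the orbit --- is precisely the subdiagram polytope obtained from $\mathrm{cone}(S_{\alpha,\beta})$ by deleting the part above the diagram $\Theta_{\alpha,\beta}$, whose normalized volume (with respect to $M_\alpha/M_{\alpha,\beta}$) is $\RSV_{\mathbb Z}(\Delta_\alpha,\Delta_\beta)$; Kushnirenko's formula then yields $\chi=(-1)^{\dim\Delta_\alpha-\dim\Delta_\beta-1}\RSV_{\mathbb Z}(\Delta_\alpha,\Delta_\beta)$. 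Finally, the index $i(\Delta_\alpha,\Delta_\beta)=[M_{\alpha,\beta}:M_\beta]$ enters because the slice of $\overline{O_\alpha}$ transverse to $O_\beta$ carries the lattice $M_{\alpha,\beta}$ along $\Delta_\beta$, while $O_\beta$ sits in $X_A$ with the smaller lattice $M_\beta$: the associated branched covering along $O_\beta$ has degree $i(\Delta_\alpha,\Delta_\beta)$, multiplying the point count by that factor. Combining the steps and substituting into the displayed recursion gives the theorem.

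The main obstacle is the geometric bookkeeping in the first and last steps: one must correctly read off the transverse-slice structure of the (possibly non-normal) variety $X_A$ along an arbitrary orbit, simultaneously controlling the three lattices $M_\beta\subseteq M_{\alpha,\beta}\subseteq M_\alpha$ and showing that the discrepancy between $M_\beta$ and $M_{\alpha,\beta}$ materializes as an $i(\Delta_\alpha,\Delta_\beta)$-fold covering, with the difference $A$ versus $P\cap M$ contributing here as well. Once that is in place, the Euler-characteristic count is a careful application of Kushnirenko's theorem in its relative/subdiagram form, which is exactly what produces both the sign $(-1)^{\dim\Delta_\alpha-\dim\Delta_\beta-1}$ and the volume $\RSV_{\mathbb Z}(\Delta_\alpha,\Delta_\beta)$. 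Useful sanity checks along the way are $\Eu(P)=1$ from smoothness of the torus, and $\Eu(\Delta)=1$ for every facet $\Delta$ when $X_A$ is normal, since normal toric varieties are smooth in codimension one.
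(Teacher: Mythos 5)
The paper does not prove this statement at all: it is quoted verbatim from Matsui--Takeuchi \cite[Thm 4.7]{MatTak}, with only the remark that the proof uses a topological, inductive definition of $\Eu$ via a Whitney stratification. Your sketch correctly reproduces the strategy of that cited proof --- stratify by torus orbits, apply the normal-slice recursion so that everything reduces to the Euler characteristic of a generic local hyperplane section of each transverse orbit, and evaluate that Euler characteristic by Kushnirenko-type subdiagram-volume formulas, with the index $i(\Delta_\alpha,\Delta_\beta)$ arising as a covering degree from the lattice discrepancy $M_\beta\subseteq M_{\alpha,\beta}$. So there is no divergence of approach to report; the only caveat is that, as you acknowledge, the three substantive steps (identifying the transverse toric germ with the semigroups $S_{\alpha,\beta}$, extracting the covering degree, and the local BKK/subdiagram computation) are asserted rather than carried out, so this remains an outline of \cite{MatTak}'s argument rather than an independent proof.
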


\begin{remark} \label{ChernMather}
By \cite[Th. 2]{PieneCM} the value of the local Euler obstruction at a torus orbit is the coefficient of the orbit in the the Chern--Mather class of $X$, i.e. $c^M(X_A) = \sum_{\Delta \preceq \Conv(A)} \Eu(\Delta) [\Delta]$.
\end{remark}

\begin{example} \label{ind1}
Let $A$ be the following lattice points in $M \simeq \Z^2$:
\[ \begin{bmatrix} 0 \\ 0 \end{bmatrix} \begin{bmatrix} 0 \\ 1 \end{bmatrix} \begin{bmatrix} 1 \\ 1 \end{bmatrix} \begin{bmatrix} 2 \\ 0 \end{bmatrix} \begin{bmatrix} 2 \\ 1 \end{bmatrix}.  \]
Let $e$ be the edge of $P = \Conv(A)$ generated by the vector $(1,0)$. Then $M_{P,e}$ is the lattice $\Z (1,0)$. However since $(1,0) \notin A$ we have that the lattice $M_e = \Z (2,0)$. Thus the index $i(P,e) = 2$.
\end{example}

In the example above we do not have that $A$ equals $M \cap \Conv(A)$. One might suspect that this is the only way to get a nontrivial index, however the following example show this to be wrong.

\begin {example} \label{ind2}
Let $Q$ be the $3$-dimensional polytope in $M \simeq \Z^4$ with lattice points
\[ \begin{bmatrix} 0 \\ 0 \\ 0 \\ 0\end{bmatrix} \begin{bmatrix} 1 \\ 0 \\ 0 \\ 0 \end{bmatrix} \begin{bmatrix} 0 \\ 1 \\ 0 \\ 0 \end{bmatrix} \begin{bmatrix} 1 \\ 1 \\ 2 \\ 0 \end{bmatrix}. \]
Assume $Q$ is the facet of a $4$-dimensional polytope $P$ cut out by setting the last coordinate equal to $0$. Furthermore assume that $P$ has enough lattice points so that $\Z P = M$. Then $M_{P,Q} = \Z^3$, but $M_Q = \Z^2 \oplus 2 \Z$, so $i(P,Q) = 2$.
\end{example}

Next we state Matsui and Takeuchi's formula for the degree of  the dual variety of a toric variety.

\begin{proposition}\cite[Cor 1.6]{MatTak} \label{dualformula} 
Assume $A$ is a finite subset of $M \simeq \Z^n$ such that $X_A^\vee$ is a hypersurface in ${\p^N}^\vee$. Setting $P = \Conv(A)$, we have
\[ \degree X_A^\vee = \sum_{Q \preceq P} (-1)^{\codim Q}(\dim Q +1)\Eu(Q) \Vol(Q) . \]
where $\Eu(Q)$ is the (constant) value of the local Euler obstruction on the torus orbit associated to $Q$, and $\Vol(Q)$ is the normalized volume of $Q$ with respect to the sublattice spanned by lattice points of $Q$.
\end{proposition}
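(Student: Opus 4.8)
The plan is to combine three ingredients: a general Pl\"ucker-type formula expressing the degree of a dual \emph{hypersurface} in terms of Chern--Mather data, the combinatorial description of the Chern--Mather class of $X_A$ recorded in Remark~\ref{ChernMather}, and the classical identity between the degree of a projective toric orbit closure and a normalized lattice volume. Write $n=\dim P$ and $h=c_1(\mathcal O_{X_A}(1))$, and for $0\le k\le n$ let $c^M_k(X_A)$ denote the $k$-dimensional component of $c^M(X_A)$ and put $\mu_k\defeq\deg\bigl(h^{k}\cap c^M_k(X_A)\bigr)\in\Z$.

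The first step is to establish the formula
\[ \deg X_A^\vee \;=\; \sum_{k=0}^{n}(-1)^{\,n-k}(k+1)\,\mu_k , \]
valid whenever $X_A^\vee$ is a hypersurface. This is the singular Pl\"ucker formula of Ernstr\"om (building on Katz and Kleiman), or equivalently follows from Piene's expression of the polar degrees $\delta_i$ of a possibly singular variety in terms of its Chern--Mather class: one has $\deg X_A^\vee=\delta_0(X_A)$ exactly when $X_A^\vee$ is a hypersurface (and $\delta_0=0$ otherwise), while $\delta_0$ is always the alternating sum of the $\mu_k$ written above. I would double-check the sign and binomial conventions against smooth toric examples, e.g.\ the Segre quadric $\p^1\times\p^1\subset\p^3$, where the right-hand side must return $2$.

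The second step feeds in the geometry of $X_A$. By Remark~\ref{ChernMather} we have $c^M(X_A)=\sum_{Q\preceq P}\Eu(Q)\,[Q]$, where $[Q]$ is the class of the orbit closure attached to the face $Q$, an irreducible subvariety of dimension $\dim Q$; hence $c^M_k(X_A)=\sum_{\dim Q=k}\Eu(Q)\,[Q]$, and therefore
\[ \mu_k \;=\; \sum_{\dim Q=k}\Eu(Q)\,\deg\bigl(h^{\dim Q}\cap[Q]\bigr) \;=\; \sum_{\dim Q=k}\Eu(Q)\,\Vol(Q), \]
the last equality being the standard fact that the orbit closure corresponding to $Q$ is the projective toric variety $X_{A\cap Q}$, whose degree in $\p^N$ is the normalized volume of $Q$ taken with respect to the lattice $M_Q$ generated by $A\cap Q$. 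Substituting into the formula of the first step and using $\codim Q=n-\dim Q$ yields exactly
\[ \deg X_A^\vee \;=\; \sum_{Q\preceq P}(-1)^{\codim Q}(\dim Q+1)\,\Eu(Q)\,\Vol(Q). \]

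The main obstacle is the first step: one must pin down the precise shape of the singular Pl\"ucker formula and the exact role of the hypothesis that $X_A^\vee$ be a hypersurface --- many toric varieties are dual defective, and for those the alternating sum must be shown to vanish rather than to compute a degree. An alternative route, closer to the microlocal methods of \cite{MatTak}, is to write $\deg X_A^\vee$ as an index of the characteristic cycle of the constant sheaf on $X_A$, stratify by torus orbits, and evaluate the resulting alternating sum of Euler characteristics of repeated generic hyperplane sections of the orbits; the contribution of the orbit of $Q$ then collapses, by the combinatorics of hyperplane sections of a torus, to $(-1)^{\codim Q}(\dim Q+1)\Vol(Q)\Eu(Q)$. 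The remaining points --- the grading convention on $c^M$, and checking that ``lattice points of $Q$'' in the statement refers to $A\cap Q$, so that $\Vol(Q)$ is taken with respect to $M_Q$ --- are routine.
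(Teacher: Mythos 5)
Your argument is correct, and it is worth noting that the paper itself offers no proof of this proposition --- it is quoted verbatim from \cite[Cor 1.6]{MatTak}, whose original derivation goes through characteristic cycles of constructible sheaves and a microlocal index formula (essentially the ``alternative route'' you sketch at the end). Your primary route is the other standard one: Piene's expression of the polar degrees of a possibly singular projective variety in terms of its Chern--Mather class gives $\deg X_A^\vee=\delta_0=\sum_{k=0}^n(-1)^{n-k}(k+1)\deg\bigl(h^k\cap c_k^M(X_A)\bigr)$ whenever the dual is a hypersurface, and then Remark~\ref{chclass}/Remark~\ref{ChernMather} (Piene's theorem that $c^M(X_A)=\sum_{Q\preceq P}\Eu(Q)[\overline{O_Q}]$) together with $\deg \overline{O_Q}=\deg X_{A\cap Q}=\Vol_{M_Q}(Q)$ collapses each $\mu_k$ to $\sum_{\dim Q=k}\Eu(Q)\Vol(Q)$. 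This is exactly the Chern--Mather/polar-degree perspective of Aluffi and Helmer--Sturmfels mentioned in the introduction, and it buys a proof that stays entirely within intersection theory, at the cost of importing the singular Pl\"ucker formula as a black box; the microlocal proof of Matsui--Takeuchi is self-contained on that point but needs the machinery of characteristic cycles. Two of your cautions can be discharged immediately: the sign convention checks out on $\p^1\times\p^1$ (giving $4-8+6=2$) and on the twisted cubic (giving $-2+6=4$); and the vanishing of the alternating sum in the dual-defective case is not needed for this statement --- it is the content of the separate Remark~\ref{defect}, since the proposition explicitly assumes $X_A^\vee$ is a hypersurface.
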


\begin{remark} \label{defect}
By \cite[Thm 1.4]{MatTak} $X_A^\vee$ is a hypersurface if and only if the formula above yields a non-zero number.
\end{remark}

If we take $A$ to be all lattice points of a polytope, we can simplify some calculations:
\begin{lemma} \label{codim}
Assume  $A=\Conv(A) \cap M$. If $\dim \Delta_\alpha - \dim \Delta_\beta = 1$ then $\RSV_\Z(\Delta_\alpha,\Delta_\beta)=1$.
\end{lemma}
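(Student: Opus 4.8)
The plan is to unwind the definition of $\RSV_\Z(\Delta_\alpha,\Delta_\beta)$ in the special case $\dim\Delta_\alpha - \dim\Delta_\beta = 1$, where the quotient lattice $M_\alpha/M_{\alpha,\beta}$ is one-dimensional, and to show that both the "ambient" semigroup $S_{\alpha,\beta}$ and the "interior" polytope $\Theta_{\alpha,\beta}$ collapse to essentially the same ray, leaving a single lattice point of normalized volume $1$ in the difference. First I would translate so that $0$ is a vertex of $\Delta_\beta$, fix an isomorphism $M_\alpha/M_{\alpha,\beta}\cong\Z$, and observe that under this identification the image $S_{\alpha,\beta}$ of the semigroup $S_\alpha$ generated by $A\cap\Delta_\alpha$ is a subsemigroup of $\Z_{\geq0}$. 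The key input is the hypothesis $A=\Conv(A)\cap M$: this forces $A\cap\Delta_\alpha$ to contain all lattice points of $\Delta_\alpha$, in particular enough points mapping to every nonnegative integer (the primitive generator of the image ray lies in $A\cap\Delta_\alpha$ because $\Delta_\beta$ is a facet of $\Delta_\alpha$ and there is a lattice point of $\Delta_\alpha$ one lattice-step off $\Delta_\beta$). Hence $S_{\alpha,\beta}=\Z_{\geq0}$.

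Next I would compute $\Theta_{\alpha,\beta}$: it is the convex hull, inside $(M_\alpha/M_{\alpha,\beta})_\R\cong\R$, of the nonzero elements of $S_\alpha/\Delta_\beta \cap M_\alpha/M_{\alpha,\beta}$, which in our one-dimensional picture is the convex hull of all strictly positive elements of the image lattice, i.e. the ray $[1,\infty)$. Therefore
\[
S_{\alpha,\beta}\setminus\Theta_{\alpha,\beta} = \Z_{\geq0}\setminus[1,\infty) = \{0\},
\]
a single lattice point. Its normalized volume with respect to the one-dimensional lattice $M_\alpha/M_{\alpha,\beta}$ is $1$, which gives $\RSV_\Z(\Delta_\alpha,\Delta_\beta)=1$ as claimed. (The degenerate case $\Delta_\alpha=\Delta_\beta$ does not arise here since the codimension is $1$, not $0$.)

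The main obstacle I anticipate is being careful about what "$S_\alpha/\Delta_\beta$" and the set $S_{\alpha,\beta}\setminus\Theta_{\alpha,\beta}$ mean precisely — i.e., pinning down Matsui–Takeuchi's convention for the relative subdiagram so that the computation above is faithful — and, relatedly, justifying that the image of $S_\alpha$ really is all of $\Z_{\geq0}$ rather than a proper subsemigroup. This is exactly where the hypothesis $A=\Conv(A)\cap M$ is used: it guarantees a lattice point of $\Delta_\alpha$ at lattice distance $1$ from $L(\Delta_\beta)$ belongs to $A$, so its image generates $\Z_{\geq0}$; without this assumption the image could be $d\,\Z_{\geq0}$ for some $d>1$ and the count would change. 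Once the one-dimensionality of the quotient is in hand the rest is a direct evaluation, so I expect the write-up to be short.
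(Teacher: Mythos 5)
Your argument is correct and follows essentially the same route as the paper's proof, which likewise reduces to the observations that $M_\alpha/M_{\alpha,\beta}\cong\Z$ and that the image of $S_\alpha$ there is generated by the primitive element $1$ (the paper is no more explicit than you are about why the minimal positive image is $1$ rather than some $d>1$). The only caveat is your literal reading of $S_{\alpha,\beta}\setminus\Theta_{\alpha,\beta}$ as the single point $\{0\}$, which would have one-dimensional volume $0$; the intended convention, used throughout the paper, is the volume of the region $\Conv(S_{\alpha,\beta})\setminus\Theta_{\alpha,\beta}=[0,1)$, which is indeed $1$ — a convention issue you yourself flag.
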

\begin{proof}
This follows almost by construction: The quotient lattice $M_\alpha / M_{\alpha,\beta}$ will be  isomorphic to $\Z$. Then $S_\alpha/\Delta_\beta$ must be generated by either $1$ or $-1$, thus it follows $\RSV_\Z(\Delta_\alpha,\Delta_\beta)=1$.
\end{proof}
\begin{corollary} \label{cod}
Assume  $A=\Conv(A) \cap M$. For any $(n-1)$-dimensional face $\Delta \lneqq P$ we have $\Eu(\Delta)=i(P,\Delta)$.
\end{corollary}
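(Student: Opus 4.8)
The plan is to apply the inductive formula of Corollary~\ref{euler} directly, exploiting that for a facet the defining sum degenerates to a single term. First I would note that an $(n-1)$-dimensional face $\Delta$ of the $n$-dimensional polytope $P$ is a facet, and the only face of $P$ properly containing a facet is $P$ itself: any face strictly between $\Delta$ and $P$ would have dimension strictly between $n-1$ and $n$, which is impossible. Hence in the formula
\[ \Eu(\Delta) = \sum_{\Delta \lneqq \Delta_\alpha} (-1)^{\dim \Delta_\alpha - \dim \Delta - 1} i(\Delta_\alpha, \Delta)\, \RSV_\Z(\Delta_\alpha, \Delta)\, \Eu(\Delta_\alpha) \]
the index of summation ranges over the single face $\Delta_\alpha = P$.

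Next I would evaluate the three surviving factors. The sign exponent is $\dim P - \dim \Delta - 1 = n - (n-1) - 1 = 0$, so the sign is $+1$. By the base case of Corollary~\ref{euler} we have $\Eu(P) = 1$. Finally, since we are assuming $A = \Conv(A) \cap M$ and $\dim P - \dim \Delta = 1$, Lemma~\ref{codim} gives $\RSV_\Z(P, \Delta) = 1$. Combining these, the formula collapses to $\Eu(\Delta) = i(P, \Delta)$, as claimed.

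There is essentially no obstacle here; the only point that merits explicit mention is the combinatorial fact that a codimension-one face of $P$ is contained in no proper face of $P$ other than $P$, which is immediate from the face-lattice structure of a polytope. It is worth emphasizing that the hypothesis $A = \Conv(A) \cap M$ enters only through the appeal to Lemma~\ref{codim}: without it the same argument would merely yield $\Eu(\Delta) = i(P,\Delta)\,\RSV_\Z(P,\Delta)$, and the $\RSV_\Z$ term need not be $1$.
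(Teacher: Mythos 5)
Your proof is correct and follows exactly the route the paper intends: the paper states this as an immediate corollary of Lemma~\ref{codim} and the inductive formula of Corollary~\ref{euler} without writing out the details, and your argument simply makes explicit that the sum has the single term $\Delta_\alpha = P$ with sign $+1$, $\Eu(P)=1$, and $\RSV_\Z(P,\Delta)=1$.
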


We  need the following well-known fact: Given set of linearly independent vectors $b_1,...,b_n \in M$ let 
\[ T(b_1,...,b_n) = \{ \sum_{i=1}^n c_ib_i | 0 \leq c_i < 1 \} \subseteq M_{\mathbb{R}} = M \otimes \mathbb{R} .\]
\begin{lemma} \label{basis}
The vectors $b_1,...,b_n$ form a basis for the lattice $M$ if and only if $T(b_1,...,b_n) \cap M = \{ 0 \}$ .
\end{lemma}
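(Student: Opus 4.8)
The plan is to prove the two implications separately, in both cases working with the sublattice $M' := \mathbb{Z}b_1 + \cdots + \mathbb{Z}b_n \subseteq M$ spanned by the given vectors. Since the $b_i$ are linearly independent, $M'$ has finite index in $M$, and the statement amounts to showing that $[M:M'] = 1$ if and only if $T(b_1,\dots,b_n) \cap M = \{0\}$.

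For the ($\Rightarrow$) direction, suppose $b_1,\dots,b_n$ is a basis of $M$. Then any $m \in M$ can be written uniquely as $m = \sum_{i=1}^n a_i b_i$ with $a_i \in \mathbb{Z}$. If moreover $m \in T(b_1,\dots,b_n)$, then by definition of $T$ and uniqueness of the coefficients we must have $0 \leq a_i < 1$, forcing $a_i = 0$ for all $i$, hence $m = 0$. So $T(b_1,\dots,b_n) \cap M = \{0\}$.

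For ($\Leftarrow$) I would argue by contraposition. Assume $b_1,\dots,b_n$ is not a basis, so $[M:M'] > 1$ and there exists $m \in M \setminus M'$. Since the $b_i$ form a $\mathbb{Q}$-basis of $M \otimes \mathbb{Q}$, write $m = \sum_{i=1}^n c_i b_i$ with $c_i \in \mathbb{Q}$, and set $m' := m - \sum_{i=1}^n \lfloor c_i \rfloor b_i = \sum_{i=1}^n (c_i - \lfloor c_i \rfloor) b_i$. Then $m' \in M$, because $m \in M$ and each $\lfloor c_i \rfloor b_i \in M' \subseteq M$; and $m' \in T(b_1,\dots,b_n)$ since every coefficient $c_i - \lfloor c_i \rfloor$ lies in $[0,1)$. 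Finally $m' \neq 0$: otherwise $m = \sum_{i=1}^n \lfloor c_i \rfloor b_i \in M'$, contradicting the choice of $m$. Hence $T(b_1,\dots,b_n) \cap M \neq \{0\}$.

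There is essentially no serious obstacle here; the only point requiring a little care is the last step of the converse, namely checking that the lattice point $m'$ produced from an element of $M \setminus M'$ is genuinely nonzero (and lies in $M$). Alternatively, one could invoke the standard fact that the number of lattice points of $M$ contained in the half-open parallelepiped $T(b_1,\dots,b_n)$ equals the index $[M:M']$ (equivalently, the absolute value of the determinant of the $b_i$ expressed in a basis of $M$), from which the equivalence is immediate; but the floor-function argument above is self-contained and avoids quoting this.
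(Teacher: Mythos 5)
Your proof is correct; the paper itself states Lemma \ref{basis} as a well-known fact and gives no proof, so there is nothing to diverge from. Your two-implication argument (uniqueness of integer coordinates for the forward direction, and the floor-function reduction of an element of $M \setminus M'$ into the half-open parallelepiped for the converse) is the standard one and is complete.
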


The following lemma will be useful when we study surfaces and $3$-folds:
\begin{lemma} \label{index1}
If $A$ is the set of lattice points of a convex lattice polytope of dimension $\leq 3$, then for any two faces  $\Delta_\beta \preceq \Delta_\alpha \preceq P$ we have $i(\Delta_\alpha,\Delta_\beta) = 1$.
\end{lemma}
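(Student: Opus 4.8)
The plan is to reduce everything to a single \emph{saturation} statement: when $A = \Conv(A)\cap M$, the lattice $M_\Delta$ of a face $\Delta$ of $P$ with $\dim\Delta\le 2$ equals $M\cap L(\Delta)$. Granting this, the lemma follows at once. After a translation we may assume $0$ is a vertex of $\Delta_\beta$, hence of $\Delta_\alpha$ and of $P$. If $\dim\Delta_\alpha=\dim\Delta_\beta$ then $\Delta_\alpha=\Delta_\beta$ and $i(\Delta_\alpha,\Delta_\beta)=1$ trivially; otherwise $\dim\Delta_\beta\le\dim\Delta_\alpha-1\le\dim P-1\le 2$. Since $\Delta_\beta\subseteq\Delta_\alpha$ we have $A\cap\Delta_\beta=\Delta_\beta\cap M\subseteq\Delta_\alpha\cap M=A\cap\Delta_\alpha$, so $M_\beta\subseteq M_\alpha$; combining this with the saturation statement applied to $\Delta_\beta$ gives the chain
\[ M_\beta\subseteq M_{\alpha,\beta}=M_\alpha\cap L(\Delta_\beta)\subseteq M\cap L(\Delta_\beta)=M_\beta, \]
so $M_{\alpha,\beta}=M_\beta$ and $i(\Delta_\alpha,\Delta_\beta)=[M_{\alpha,\beta}:M_\beta]=1$.

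To prove the saturation statement, the case $\dim\Delta=0$ is vacuous and $\dim\Delta=1$ is elementary: writing $\Delta=[0,q]$ with $q$ the other vertex, $q=cu$ for the primitive lattice vector $u$ along $\Delta$ and some $c\in\Z_{>0}$, the lattice points of $\Delta$ are $0,u,2u,\dots,cu$, whence $M_\Delta=\Z u=M\cap L(\Delta)$. The case $\dim\Delta=2$ is the real content. Here $A\cap\Delta=\Delta\cap M$ is the full set of lattice points of $\Delta$, and $\Delta$ contains at least three non-collinear lattice points (its vertices); I would pick a triangle $\tau$ with vertices in $\Delta\cap M$ of minimal positive area. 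A routine subdivision argument (connecting an interior lattice point of $\tau$ to its three vertices, or an edge lattice point to the opposite vertex) shows $\tau$ has no lattice points besides its vertices, so Pick's theorem forces its area to be $0+\tfrac{3}{2}-1=\tfrac12$; hence $\tau$ is unimodular and the two edge vectors of $\tau$ emanating from a common vertex form a $\Z$-basis of $M\cap L(\Delta)$. These vectors are differences of lattice points of $\Delta$ and so lie in $M_\Delta$, giving $M\cap L(\Delta)\subseteq M_\Delta$; the reverse inclusion is clear.

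The only genuine obstacle is this two-dimensional case, and it is precisely where the hypothesis $\dim P\le 3$ enters: for faces of dimension $\ge 3$ there is no analogue of Pick's theorem, empty lattice simplices can have arbitrarily large normalized volume, and Example \ref{ind2} exhibits a $3$-dimensional facet $Q$ with $M_Q\subsetneq M\cap L(Q)$ and index $2$. One should also not forget the harmless point that $A\cap\Delta$ really is $\Delta\cap M$ for every face $\Delta$, which is exactly where the assumption $A=\Conv(A)\cap M$ is used; Example \ref{ind1} shows that without it the index can already be $2$ in dimension $2$.
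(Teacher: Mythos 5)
Your proof is correct and follows essentially the same route as the paper: both reduce to showing that the lattice points of a face of dimension $\le 2$ generate the full induced lattice $M\cap L(\Delta)$, with the two-dimensional case handled by producing an empty lattice triangle inside the face and showing it is unimodular. The only cosmetic difference is that you certify unimodularity of the empty triangle via Pick's theorem, whereas the paper uses the fundamental-domain criterion of Lemma \ref{basis} together with the reflection $v+w-p$.
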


\begin{proof}
Let $d = \dim \Delta_\beta$. We check each value of $d$ separately. We need to check that $M_{\alpha,\beta} \subset M_\beta$. We will do this by showing that $M_{P,\beta} \subset M_\beta$. Again we fix $0$ as a common vertex of $\Delta_\beta$ and $\Delta_\alpha$. 

 If $d=0$ there is nothing to prove.

If $d=1$ pick the first lattice point along the ray generated by $\Delta_\beta$, starting at $0$. By construction of $A$ this necessarily generates all lattice points of $M$ which are contained in $L(\Delta_\beta)$.

If $d=2$ we do something similar: Pick a pair of primitive lattice points $v,w \in \Delta_\beta$ such that the only lattice points of $M$ contained in the set $R_{v,w}= \{ av + bw | 0 \leq a,b \leq 1, a+b \leq 1 \}$ are $0,v,w$. We claim this can always be done.

 Indeed, pick any primitive $v',w'$. Then $R_{v',w'}$ contains finitely many lattice points. If there exists $u \in R_{v',w'}, u \neq 0,v',w'$, we may without loss of generality assume $u$ is primitive, and  consider $R_{u,v'}$ which have fewer lattice points. Iterating this proves the claim.

Now we claim that $v,w$ is a basis for $M \cap L(\Delta_\beta)$. If not, then by Lemma \ref{basis} there is a lattice point $p \in M$ such that $p = av +bw$ with $0 \leq a,b < 1$. By assumption $a+b > 1$. But then $v+w-p = v(1-a) + w(1-b)$ is a lattice point in $R_{v,w}$ different from $0,v,w$ which is a contradiction.
\end{proof}

Assuming the polytope $P$ is very ample, we have that $X_{P \cap M} \simeq X_{\Sigma_P}$. In this case it will be convenient to be able to compute the local Euler obstruction using the language of fans (for instance when we relate it to the resolution of singularities for surfaces), so we describe how this is done.

We have the identification, for a vertex $v$ of $P$, of  $C_v = \Cone(P \cap M - v)$ with a cone $\sigma^\vee \subset M_\R$ dual to a maximal cone $\sigma$ in the normal fan $\Sigma_P$. This is compatible with face inclusions: If $\Delta_\alpha$ is a face of $P$ containing $v$, there is a corresponding face $\tau_\alpha$ of $\sigma$. We then have $\RSV_\Z(\Delta_\alpha,\Delta_\beta) = \RSV_\Z(\tau_\alpha^\vee,\tau_\beta^\vee)$ where the last expression means:

Let $M_\beta' = M_\R / L(\tau_\beta^\vee)$ and let $K_{\alpha,\beta}$ be the image of $\tau_\alpha^\vee$ in $M_\beta'$. Then $\RSV(\tau_\alpha^\vee,\tau_\beta^\vee)$ equals $\Vol (K_{\alpha,\beta} \setminus \Theta_{\alpha,\beta})$ where $\Theta_{\alpha,\beta}$ is $\Conv (K_{\alpha,\beta} \cap M_\beta' \setminus \{ 0 \})$, and the volume is normalized with respect to the lattice $M_\beta' \cap L(K_{\alpha,\beta})$.

\section{Weighted projective spaces}

Our main examples in this paper are the weighted projective spaces(wps), which are defined  as follows:

Let $q_0,...,q_n \in \mathbb{N}$ satisfy gcd$(q_0,...,q_n) = 1$. Define $\mathbb{P}(q_0,...,q_n) = (\mathbb{C}^{n+1} \setminus \{ 0 \}) / \sim$ where $\sim$ is the equivalence relation:
\[(a_0,...a_n) \sim (b_0,...,b_n) \ekv a_i = \lambda^{q_i}b_i  \text{ for all i, for some } \lambda \in \mathbb{C}^{\ast}. \]
We call $\mathbb{P}(q_0,...,q_n)$ the wps corresponding to  $q_0,...,q_n$. Observe that $\p(1,...,1) \simeq \p^n$. We can construct a wps as a toric variety by the following:

Given natural numbers $q_0,...,q_n$ with $\gcd(q_0,...,q_n)=1$, consider the quotient lattice $\Z^{n+1}$ by the subgroup generated by $(q_0,...,q_n)$, and write $N=\mathbb{Z}^{n+1} / \mathbb{Z} (q_0,...,q_n)$. Let $u_i$ for $i=0,...,n$ be the images in $N$ of the standard basis vectors of $\Z^{n+1}$. This means that in $N$ we have the relation
\[ q_0u_0 + ... + q_nu_n = 0 . \]
Let $\Sigma$ be the fan consisting of all cones generated by proper subsets of $\{ u_0,...,u_n \}$. Then $X_\Sigma = \mathbb{P}(q_0,...,q_n)$. By the quotient construction of toric varieties one gets by \cite[Example 5.1.14]{Cox} that $X_\Sigma$ is a geometric  quotient, whose points agree with the set-theoretic definition given above.

From \cite{RossiTerra} we can also describe the wps as embedded in projective space via a polytope $P$ giving $\p(q_0,...,q_n) \simeq X_P$:

Given $(q_0,...,q_n)$ and $M \cong \Z^{n+1}$, let $\delta = \lcm(q_0,...,q_n)$. Consider the $n+1$ points of $M_\R \cong \R^{n+1}$:
\[ v_i = (0,...,\frac{\delta}{q_i},...0), i = 0,...,n .\]
Let $\Delta$ be the convex hull of $0$ and all $v_i$. Intersecting $\Delta$ with the hyperplane $H= \{ (x_0,...,x_n) | \sum_{i=0}^n x_iq_i = \delta \}$, we get a $n$-dimensional polytope $P$. Then $X_P \cong \p(q_0,...,q_n)$ and the associated divisor $D_P$ will be  $\frac {\delta}{q_0}D_0$.  This divisor is very ample and its class generates $\Pic(\mathbb{P}(q_0,...,q_n)) \simeq \Z$. When we speak of the degree of the dual variety of a weighted projective space, we will always mean using the embedding given by $D_P$.

There are characterizations of when $\mathbb{P}(q_0,...,q_n) \simeq \mathbb{P}(s_0,...,s_n)$ in terms of the weights, see for instance \cite{RossiTerra}. The upshot is that we can  assume the weights are reduced, i.e., that for all $i$ $\GCD(q_0,...,\ol{q_i},...q_n)=1$. We will always make this assumption.

Following  \cite[5.15]{Fletcher} we can describe the singular locus of the wps: Recall that the fan $\Sigma$ is the collection of cones $\Cone(u_j | j \in J)$ for all proper subsets $J \subset \{0,...,n\}$. Set $\sigma_{j_1,...,j_k}=\Cone(u_{j_1},...,u_{j_k})$. Fixing one such cone $\sigma_{j_1,...,j_k}$, let $I=\{ i_0,...,i_{n-k} \} = \{0,...,n \} \setminus \{j_1,...,j_k\}$. Then we have:
\begin{proposition} \label{singlocus} \cite[Prop 2.1.7]{biun}
 $\p(q_0,...,q_n)$ is nonsingular in codimension $k$ if for all $\{j_1,...,j_k \}$, the corresponding $\gcd(q_{i_0},...,q_{i_{n-k}})=1$. In particular:

$\p(q_0,...,q_n)$ is nonsingular in codimension $1$ .

$\p(q_0,...,q_n)$ has isolated singularities if and only if $\gcd(q_i,q_j)=1$ for all $i,j$.
\end{proposition}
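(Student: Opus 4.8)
The plan is to reduce the statement to the standard smoothness criterion for affine toric varieties together with a short lattice computation. I keep the notation above: the fan $\Sigma$ has rays $u_0,\dots,u_n$ in $N=\Z^{n+1}/\Z(q_0,\dots,q_n)$ subject to $q_0u_0+\dots+q_nu_n=0$, and for a proper subset $J=\{j_1,\dots,j_k\}\subsetneq\{0,\dots,n\}$ I write $\sigma_J=\Cone(u_j:j\in J)$ and $I=\{0,\dots,n\}\setminus J$.

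First I would record two reductions. (i) Using $\sum q_iu_i=0$ with all $q_i>0$: any linear relation among $(u_j)_{j\in J}$ lifts to a multiple of $(q_0,\dots,q_n)$ in $\R^{n+1}$, and this multiple must vanish because $I\neq\emptyset$ supplies a coordinate $i$ with $q_i>0$ appearing with coefficient $0$; hence the $u_j$, $j\in J$, are linearly independent, $\dim\sigma_J=|J|$, and the $k$-dimensional cones of $\Sigma$ are exactly the $\sigma_J$ with $|J|=k$. (ii) By the orbit--cone correspondence and the fact that $X_\Sigma$ is smooth along the orbit $O(\sigma_J)$ if and only if $\sigma_J$ is a smooth cone, together with $\codim O(\sigma_J)=\dim\sigma_J$ and the fact that faces of smooth cones are smooth, $\p(q_0,\dots,q_n)$ is nonsingular in codimension $k$ if and only if every $\sigma_J$ with $|J|=k$ is smooth (any $\sigma_J$ with $|J|<k\le n$ is a face of some $\sigma_{J'}$ with $|J'|=k$).

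The heart of the argument is the smoothness criterion $\sigma_J\text{ smooth}\iff\gcd(q_i:i\in I)=1$. I would first check that each $u_i$ is primitive in $N$: if $u_i=mw$ with $m\ge 2$, lifting $w$ gives $e_i-m\widetilde w=c(q_0,\dots,q_n)$ in $\Z^{n+1}$ for some $c\in\Z$, so $m\mid cq_\ell$ for every $\ell\neq i$; reducedness of the weights gives $\gcd(q_\ell:\ell\neq i)=1$, which forces $m\mid c$ and then contradicts the $i$-th coordinate $1-m\widetilde w_i=cq_i$. Thus the $u_j$, $j\in J$, are the primitive ray generators of the simplicial cone $\sigma_J$, so the multiplicity of $\sigma_J$ is the order of the torsion subgroup of $N/\sum_{j\in J}\Z u_j$. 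Quotienting $\Z^{n+1}$ by the $e_j$, $j\in J$, turns $\sum q_iu_i=0$ into $\sum_{i\in I}q_ie_i=0$ and yields
\[ N\Big/\sum_{j\in J}\Z u_j\;\cong\;\Z^{I}\Big/\Z\,(q_i)_{i\in I}, \]
where $\Z^{I}$ denotes the free lattice on $(e_i)_{i\in I}$; by Smith normal form the torsion subgroup on the right has order $\gcd(q_i:i\in I)$. Hence $\sigma_J$ is smooth exactly when this gcd is $1$.

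Assembling the pieces: the displayed criterion says precisely that every $k$-dimensional cone $\sigma_J$ is smooth, which by reduction (ii) is equivalent to nonsingularity in codimension $k$. For $k=1$ the criterion reads $\gcd(q_\ell:\ell\neq j)=1$ for all $j$, which is exactly the reducedness hypothesis, so every weighted projective space is nonsingular in codimension $1$. Finally, $\p(q_0,\dots,q_n)$ has isolated singularities if and only if its singular locus has dimension $\le 0$, i.e.\ if and only if it is nonsingular in codimension $n-1$, i.e.\ (applying the criterion with $|I|=2$) if and only if $\gcd(q_a,q_b)=1$ for all $a\neq b$. I expect the smoothness criterion to be the main obstacle, and within it the step that genuinely uses the running hypotheses is the primitivity of the $u_i$: without reducedness the $u_i$ need not be primitive, the ray generators of $\sigma_J$ would be smaller, and the multiplicity could no longer be read off directly from the torsion of $N/\sum_{j\in J}\Z u_j$.
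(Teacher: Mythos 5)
Your argument is correct and complete. The paper itself gives no proof of this proposition (it is quoted from \cite{Fletcher} and \cite{biun}), so there is nothing in the text to compare line by line; but your route is the natural one given the paper's toric setup, and every step checks out: the linear independence of $(u_j)_{j\in J}$ from $\sum q_iu_i=0$ with $I\neq\emptyset$, the reduction of ``nonsingular in codimension $k$'' to smoothness of the $k$-dimensional cones via the orbit--cone correspondence, the primitivity of the $u_i$ (which is indeed exactly where reducedness of the weights is used, and which is needed so that the multiplicity of $\sigma_J$ is the torsion of $N/\sum_{j\in J}\Z u_j$ rather than something larger), and the Smith-normal-form computation $\Z^{I}/\Z(q_i)_{i\in I}\cong \Z/\gcd(q_i:i\in I)\oplus\Z^{|I|-1}$. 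Note that you actually prove the sharper ``if and only if'' version of the codimension-$k$ criterion, whereas the proposition only asserts ``if''; this strengthening is what makes the ``only if'' direction of the isolated-singularities statement come out cleanly. For contrast, the cited source \cite{Fletcher} reaches the same conclusion through the local description of $\p(q_0,\dots,q_n)$ as glued cyclic quotient singularities $\C^n/\mu_{q_i}$, identifying the singular strata with the loci where a common factor of some subset of the weights acts nontrivially; your lattice-theoretic argument avoids those local charts entirely and is arguably cleaner in the toric language the paper uses.
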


Thus for  surfaces we will always have isolated singularities, but in larger dimensions we might have larger singular locus, for instance $\p(2,2,3,3)$ does not have isolated singularities.

\section{The surface case}

In this section we will let $A$ consist of all lattice points of a $2$-dimensional lattice polytope $P$. Recall that then we have $X_A = X_{P \cap M} \simeq X_P \simeq X_{\Sigma_P}$ and $X_A$ is normal. From Proposition~\ref{dualformula} we have
\[ \deg X_P^\vee = 3\Vol(P) - 2 E(P) + \sum_{v \text{ vertex } \in P} \Eu(v), \]
where $E(P)$ is the sum of the normalized lengths of the edges of $P$. Thus we need to compute the Euler obstruction of the singular vertices. By Lemma \ref{index1} all indices $i(\Delta_\alpha,\Delta_\beta)$ are equal to $1$.

By  Corollary \ref{euler} we get for a vertex $v$, letting $e_1,e_2$ be the edges of $P$ containing $v$:
\[\Eu(v) = \RSV_{\mathbb{Z}}(e_1,v)\Eu(e_1) +  \RSV_{\mathbb{Z}}(e_2,v)\Eu(e_2) -  \RSV_{\mathbb{Z}}(P,v),\]
By Lemma \ref{codim}, $\RSV_\Z(P,e_i)=1$ and $\RSV_\Z(e_i,v)=1$, while by Corollary \ref{cod}, $\Eu(e_i)=1$, for $i=1,2$. Thus we reduce calculations to:
\[ \Eu(v) = 2 - \RSV_{\mathbb{Z}}(P,v). \]
To calculate $ \RSV_{\mathbb{Z}}(P,v)$ we get that $M_P/M_{P,v}$ will  equal $M$. Hence $S_{P,v}$ will  be the semigroup generated by the lattice points of the polytope $P$, after translating $P$ such that $v$ is the origin. Then  $\RSV_{\mathbb{Z}}(P,v)$ will be the area removed, if we instead of $P$ consider the convex hull of the points of $(P  \setminus \{v \}) \cap M$.
\begin{lemma} \label{euob}
For a $2$-dimensional lattice polytope $P$ and a vertex $v$ we have
\[ \Eu(v) = 1 -c, \]
where $c$ is the number of internal lattice points of $P$ which are boundary points of $\Conv((P \setminus v) \cap M)$.
\end{lemma}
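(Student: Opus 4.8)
The plan is to combine the two facts already assembled just before the statement: for a vertex $v$ of a $2$-dimensional lattice polytope $P$ we have $\Eu(v) = 2 - \RSV_{\mathbb{Z}}(P,v)$, and $\RSV_{\mathbb{Z}}(P,v)$ equals the normalized area removed when $P$ is replaced by $P' := \Conv((P \setminus \{v\}) \cap M)$. Thus it suffices to prove that this removed area equals $1 + c$; the lemma follows at once, since then $\Eu(v) = 2 - (1+c) = 1 - c$. I would translate $v$ to the origin and study the cut-off region $R := \overline{P \setminus P'}$, showing it is a lattice polygon with no interior lattice points and exactly $c+3$ boundary lattice points, and then apply Pick's theorem.

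To identify $R$, let $w_1,w_2$ be the first lattice points along the two edges $e_1,e_2$ of $P$ meeting at $v$; by the very meaning of ``first'', the segments $[v,w_1]$ and $[v,w_2]$ contain no lattice points other than their endpoints (the same mechanism as in the proof of Lemma \ref{index1}). The portion of $\partial P'$ facing $v$ is a convex lattice path from $w_1$ to $w_2$; write its lattice points as $w_1 = q_0, q_1, \ldots, q_{c'}, q_{c'+1} = w_2$, noting that consecutive $q_j$ bound primitive segments. Since $v$ is a vertex, hence an extreme point, of $P = \Conv(P \cap M)$, it is not in the convex hull of the remaining lattice points, so $v$ is the \emph{only} lattice point of $P$ absent from $P'$. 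Consequently each $q_j$ with $1 \le j \le c'$ is a lattice point of $P$ lying on $\partial P'$ but off the edges $e_1,e_2$, hence interior to $P$; conversely these are exactly the interior lattice points of $P$ on $\partial P'$, so $c' = c$.

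Counting boundary lattice points of $R$ is now routine: they are precisely $v,\ w_1 = q_0,\ q_1,\ \ldots,\ q_c,\ q_{c+1} = w_2$, giving $B = c+3$, with no lattice points in the interiors of the bounding segments by the two observations above. Moreover $R$ has \emph{no} interior lattice point, since any lattice point of $R$ lies in $P$ and is therefore either $v$ (a vertex of $R$) or a point of $P'$, the latter being forced onto $\partial P' \subset \partial R$. Identifying the rank-$2$ lattice $M$ with $\Z^2$ (under which normalized area is twice Euclidean area, so that a unimodular triangle has $\RSV_{\mathbb{Z}} = 1$), Pick's theorem gives Euclidean area $I + B/2 - 1 = (c+1)/2$, hence normalized area $c+1$. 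Therefore $\RSV_{\mathbb{Z}}(P,v) = c+1$ and $\Eu(v) = 1 - c$.

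I expect the only genuinely delicate step to be the bookkeeping in the third paragraph: verifying that $R$ is truly free of interior lattice points, and that the tally $B = c+3$ matches the definition of $c$ exactly. In particular one must check that the endpoints $w_1,w_2$, lying on the edges of $P$, are boundary rather than interior points of $P$ and so are \emph{not} among the $c$ counted points, and that the degenerate case $c = 0$ behaves correctly—there $R$ is the unimodular triangle with vertices $v,w_1,w_2$, of normalized area $1$, recovering $\Eu(v) = 1$ and confirming that $v$ is then a smooth point.
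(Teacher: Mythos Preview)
Your proof is correct and follows essentially the same route as the paper: both start from $\Eu(v) = 2 - \RSV_{\mathbb{Z}}(P,v)$ and use Pick's theorem to show the removed area equals $c+1$. The only cosmetic difference is that you apply Pick once to the cut-off region $R$, whereas the paper applies Pick separately to $P$ and to $P' = \Conv((P\setminus v)\cap M)$ and subtracts.
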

\begin{proof}
By the above discussion
\[\Eu(v) = 2 - \Vol(P) + \Vol (\Conv((P \setminus v) \cap M)) .\]
(This formula is also found in \cite[Corollary 3.2]{SturmHelm}, \cite[Proposition 5.2.12]{Mork},\cite[Proposition 1.11.7]{biun}.) By Pick's formula 
\[ \Vol(P) = 2i +b-2, \]
\[ \Vol(\Conv((P \setminus v) \cap M)) = 2(i-c)+(b+c-1) -2, \]
hence 
\[  \Eu(v) = 2 - (1+c)=1-c. \]
\end{proof}

\begin{figure}
\begin{tikzpicture} [scale=0.8]
  \foreach \x in {-1,0,...,4}{
      \node[draw,circle,inner sep=1pt,fill] at (\x,0) {};
    }
  \foreach \y in {-1,0,...,4}{
    \node[draw,circle,inner sep=1pt,fill] at (0,\y) {};
  }
  \node[draw,circle,inner sep=1pt,fill] at (1,3) {};
   \node[draw,circle,inner sep=1pt,fill] at (1,1) {};
   \node[draw,circle,inner sep=1pt,fill] at (1,2) {};
   \node[draw,circle,inner sep=1pt,fill] at (2,1) {};
\draw [->] (-2,0) -- (5,0);
\draw [->] (0,-2) -- (0,5);
\draw [-, ultra thick] (0,0) -- (0,2);
\draw [-, ultra thick] (0,2) -- (1,3);
\draw [-, ultra thick] (1,3) -- (3,0);
\draw [-, ultra thick] (0,0) -- (3,0);
\node [above] at (1,1) {P};
\end{tikzpicture}
\begin{tikzpicture} [scale=0.8]
  \foreach \x in {-1,0,...,4}{
      \node[draw,circle,inner sep=1pt,fill] at (\x,0) {};
    }
  \foreach \y in {-1,0,...,4}{
    \node[draw,circle,inner sep=1pt,fill] at (0,\y) {};
  }
  \node[draw,circle,inner sep=1pt,fill] at (1,2) {};
  \node[draw,circle,inner sep=1pt,fill] at (1,1) {};
   \node[draw,circle,inner sep=1pt,fill] at (1,2) {};
   \node[draw,circle,inner sep=1pt,fill] at (2,1) {};
\draw [->] (-2,0) -- (5,0);
\draw [->] (0,-2) -- (0,5);
\draw [-, ultra thick] (0,0) -- (0,2);
\draw [-, ultra thick] (0,2) -- (1,2);
\draw [-, ultra thick] (1,2) -- (3,0);
\draw [-, ultra thick] (0,0) -- (3,0);
\end{tikzpicture}

\caption{The polytope $P=\Conv((0,0),(0,2),(1,3),(3,0))$. Removing the vertex $(1,3)$ we get the right figure. $\Vol(P) = 11$ while the volume of the new polytope is $8$. Hence $\Eu(1,3)=2-11+8=-1$. }
\end{figure}
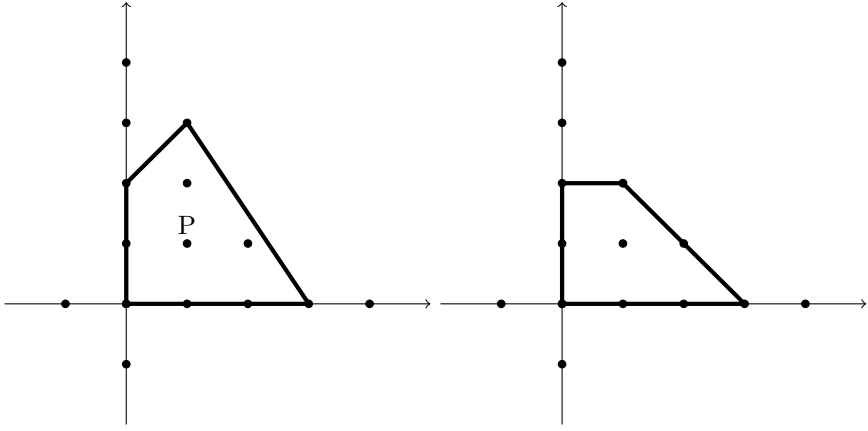
One can also describe the Euler obstruction in terms of a resolution of singularities:
\begin{proposition} \cite{GGS} \label{freu}
Let $p \in S$ be a normal cyclic surface singularity, and $X \rightarrow S$ a minimal resolution of the singularity $p$ with exceptional curves $E_i$. Then
\[ \Eu(p) = \sum_i (2 + E_i \cdot E_i ). \]
\end{proposition}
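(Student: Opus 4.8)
The plan is to reconcile the combinatorial formula $\Eu(v) = 1 - c$ from Lemma~\ref{euob} with the resolution-theoretic formula $\Eu(p) = \sum_i (2 + E_i^2)$ by passing through Hirzebruch--Jung continued fractions, which encode both sides. First I would set up the local picture: a normal cyclic surface singularity $p \in S$ corresponds to an affine toric variety $U_\sigma$ where $\sigma \subset N_\R$ is a two-dimensional cone, and after an $\mathrm{SL}_2(\Z)$ change of coordinates we may take $\sigma = \Cone((0,1),(k,-a))$ with $0 < a < k$ and $\gcd(a,k)=1$ (the $\frac{1}{k}(1,a)$ cyclic quotient singularity). The dual cone $\sigma^\vee$ is then spanned by lattice vectors, and the vertex $v$ of the polytope $P$ at $p$ has its local cone $C_v = \Cone(P \cap M - v)$ equal to $\sigma^\vee$; the quantity $\RSV_\Z(P,v)$ in the discussion preceding Lemma~\ref{euob} is exactly the normalized area cut off when one replaces $\sigma^\vee$ by the convex hull of its nonzero lattice points, i.e. the ``rounding'' of the cone.

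The key step is the classical fact (this is the content of Chapter 5 of \cite{Mork}, and standard toric geometry, e.g. \cite{Cox}) that the minimal resolution of $U_\sigma$ is obtained by subdividing $\sigma$ at the lattice points on the boundary of $\Conv(\sigma \cap N \setminus\{0\})$, and that if the Hirzebruch--Jung continued fraction expansion is
\[ \frac{k}{k-a} = b_1 - \cfrac{1}{b_2 - \cfrac{1}{\ddots - \cfrac{1}{b_r}}}, \qquad b_i \geq 2, \]
then the exceptional divisor of the minimal resolution is a chain of $r$ smooth rational curves $E_1,\dots,E_r$ with $E_i^2 = -b_i$. Hence $\sum_i(2 + E_i^2) = \sum_i (2 - b_i) = 2r - \sum_i b_i$. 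It remains to show the combinatorial count gives the same number: I would argue that $c$, the number of interior lattice points of $P$ lying on the boundary of $\Conv((P\setminus v)\cap M)$, equals — locally near $v$ — the number of lattice points strictly in the interior of the two edges of the ``rounded'' cone at $v$, which by the dual description of the continued fraction is $\sum_i b_i - 2r - 1$ (the $b_i$ record the lengths of the segments of the rounded boundary, the terms overlap at $r-1$ internal breakpoints, and the two endpoints adjacent to the edges of $\sigma^\vee$ are not interior points of $P$). Carefully bookkeeping the overlaps then yields $1 - c = 2r - \sum_i b_i = \sum_i(2 + E_i^2)$.

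The main obstacle, and the step deserving the most care, is precisely this bookkeeping: matching ``interior lattice points of $P$ on the boundary of the convex hull after deleting $v$'' with the continued-fraction data on the dual side. One must be careful that $c$ counts only points that are \emph{interior} to $P$ (not the two lattice points on $\partial P$ adjacent to $v$ along its two edges, which get absorbed into the new polytope's boundary but are not counted by $c$), and that the $E_i^2 = -b_i$ come from the expansion of $\frac{k}{k-a}$ rather than $\frac{k}{a}$ — a standard but easy-to-slip point. A clean way to avoid errors is to do both computations purely in terms of the local cone $\sigma^\vee$ at $v$: the formula $\Eu(v) = 2 - \RSV_\Z(P,v)$ is already entirely local, and $\RSV_\Z(P,v) = \Vol(\sigma^\vee \cap B) - \Vol(\Conv((\sigma^\vee \cap B)\cap M \setminus \{0\}))$ for a suitable bounding simplex $B$, so both invariants reduce to lattice-point counts on the Hirzebruch--Jung chain, after which the identity $2r - \sum b_i$ appears on both sides and equality follows. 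Finally, since $\Eu$ is a local invariant (as noted in Section~\ref{secEu}) and every normal surface singularity appearing is cyclic (toric), the local computation suffices to conclude the proposition in general.
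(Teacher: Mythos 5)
Your route is essentially the one the paper itself takes: the general statement is only cited from \cite{GGS}, but the paper's own proof of the toric case (Corollary~\ref{bi}) is exactly your reduction --- combine $\Eu(v)=1-c$ from Lemma~\ref{euob} with Construction~\ref{conres} and Proposition~\ref{contFr} to identify $c$ with the length $r$ of the expansion $\frac{d}{d-k}=[c_1,\dots,c_r]^-$ coming from the rounded cone on the $M$-side, and then compare with the expansion $\frac{d}{k}=[b_1,\dots,b_s]^-$ that records the self-intersections $E_i^2=-b_i$ of the minimal resolution on the $N$-side. The ``careful bookkeeping'' you defer is precisely the classical duality between the two expansions, namely $r=1+\sum_{i}(b_i-2)$ (Lemma~\ref{length}, quoted from Oda), and here your intermediate count has a sign slip: you write $c=\sum_i b_i-2r-1$, but it should be $\sum_i b_i-2r+1$ (with $r$ the number of exceptional curves), which is exactly what makes $1-c=\sum_i(2-b_i)$ come out; as written, your two displayed claims for $c$ and for $1-c$ are inconsistent by $2$. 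The reduction of the general cyclic quotient singularity to the toric local model, and the convention check that the $b_i$ come from $\frac{d}{k}$ on the resolution side versus $\frac{d}{d-k}$ on the $\Eu$ side, are handled the same way as in the paper.
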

We will relate these two descriptions of the Euler obstruction.

One can describe resolutions of singularities for toric varieties in general (see for instance \cite[Ch. 11.1]{Cox}), and for surfaces the minimal resolution can be made quite explicit (we follow descriptions in  \cite{Fractions} and \cite{Dais1}). Now we switch to the language of fans.

Given a rational number $\lambda$, we can consider the Hirzebruch--Jung (HJ) continued fraction
\[ \lambda = b_1 - \frac {1} {b_2 - \frac{1} {... - \frac{1} {b_r} }},  \]
which we will denote by $[b_1,...,b_r]^-$.

Since this is a local computation, we do this cone by cone, so we assume $\sigma$ is a $2$-dimensional cone. We include the proof of the following result, which is well-known, because it shows how to construct the integers $k$ and $d$:
\begin{lemma} \label{cone} 
Given any singular $2$-dimensional cone $\sigma$, one can choose a basis $\{ e_1,e_2 \}$ for the lattice $L$ such that in this basis $\sigma = $ Cone$(e_1,ke_1+de_2)$, where $d>k>0$ and $\gcd(d,k)=1$.
\end{lemma}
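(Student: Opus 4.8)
The plan is to argue via a two-dimensional lattice analysis, exploiting that $\sigma$ is a strongly convex rational polyhedral cone in a rank-$2$ lattice $L$. First I would let $v_1$ and $v_2$ be the primitive generators of the two rays of $\sigma$. Since $\sigma$ is singular, $\{v_1,v_2\}$ is not a basis of $L$; but any primitive vector in a rank-$2$ lattice can be completed to a basis, so I would choose $e_1 = v_1$ and pick some $e_2'$ with $\{e_1,e_2'\}$ a basis of $L$. Writing $v_2 = a e_1 + d e_2'$ in this basis, primitivity of $v_2$ forces $\gcd(a,d)=1$, and strong convexity forces $d \neq 0$; after replacing $e_2'$ by $-e_2'$ if necessary I may assume $d > 0$. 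Since $\sigma$ is singular, $d \geq 2$ (if $d = 1$ then $\{v_1,v_2\}$ would already be a basis and $\sigma$ smooth).

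The remaining task is to normalize $a$ modulo $d$. The key observation is that replacing $e_2'$ by $e_2 = e_2' + m e_1$ for $m \in \Z$ keeps $\{e_1,e_2\}$ a basis of $L$ and leaves $e_1$ fixed, while changing the coefficient $a$ to $a - m d$. Hence I may choose $m$ so that the new coefficient $k := a - m d$ satisfies $0 \le k < d$; moreover $\gcd(k,d)=\gcd(a,d)=1$, and $k \neq 0$ because $\gcd(k,d)=1$ with $d \ge 2$. This gives $0 < k < d$, $\gcd(d,k)=1$, and $\sigma = \Cone(e_1, k e_1 + d e_2)$, which is exactly the claimed normal form (note the paper writes $d > k > 0$, equivalent to $0 < k < d$).

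The one point that deserves care — and I expect it to be the only real subtlety — is confirming that the inequality is strict, i.e. that we genuinely get $k > 0$ rather than $k \geq 0$, and that $k < d$ rather than $k \le d$; both follow cleanly from $\gcd(k,d) = 1$ together with $d \ge 2$, so that $k$ cannot equal $0$ or $d$. I would also remark why $\gcd(d,k) = 1$ is automatic: the matrix expressing $(v_1, v_2)$ in the basis $(e_1, e_2)$ is $\begin{pmatrix} 1 & k \\ 0 & d \end{pmatrix}$ up to sign, and its rows generate a finite-index sublattice, but here we only need that $v_2 = k e_1 + d e_2$ is primitive, which is equivalent to $\gcd(k,d) = 1$. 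Everything else is bookkeeping about change of basis in $\Z^2$.
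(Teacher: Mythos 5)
Your proof is correct and follows essentially the same route as the paper's: fix one primitive ray generator as $e_1$, complete it to a basis, write the second generator in coordinates, and reduce the $e_1$-coefficient modulo $d$ by a unimodular change of the second basis vector. You are in fact slightly more explicit than the paper in justifying the strict inequalities $0<k<d$, via $\gcd(k,d)=1$ together with $d\ge 2$.
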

\begin{proof}
We can always choose a primitive generator $u$ of an edge of $\sigma$ as the first basis vector of our lattice. Let $(e_1=u,e_2')$ be a basis for the lattice. The other facet of the cone will in this basis be generated by a  vector $w = ae_1+be_2'$.  Now let $d=|b|$ and $k=a \mod{d}$, where $0 < k < d$.

Then $w =(a-k+k)e_1+ \sign(b) d e_2' = ke_1+d(\sign(b)e_2' + \frac {a-k} {d}e_1)$. Thus we see that in the new basis $\{e_1,e_2=\sign(b)e_2' + \frac {a-k} {d}e_1 \}$, $w =  ke_1+de_2$.
\end{proof}
\begin{definition}
We say that a cone $\sigma$ is of type $(d,k)$ if it can be written as in Proposition~\ref{cone} with parameters $d,k$. 

Note also that some literature, notably \cite{Cox} and \cite{Fulton}, use a different convention for a $(d,k)$-cone, so that some results sometimes appear a bit different. 
\end{definition}
\begin{lemma} \label{dk-cone} \cite[Lemma 3.3]{Dais1}
Assume $\sigma^\vee$ is a $(d,k)$-cone in $M_\R$ with respect to $\{ e_1,e_2 \}$. Then $\sigma$ is a $(d,d-k)$-cone in $N_\R$ with respect to the basis $\{e_2^*, e_1^*-e_2^* \}$.
\end{lemma}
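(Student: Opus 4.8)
The plan is to compute the dual cone of a $(d,k)$-cone explicitly and then read off the parameters in the prescribed basis. Suppose $\sigma^\vee = \Cone(e_1, ke_1 + de_2) \subset M_\R$ with $d > k > 0$ and $\gcd(d,k)=1$. First I would write down the dual cone $\sigma \subset N_\R$ in the dual basis $\{e_1^*, e_2^*\}$: it consists of all $u$ with $\langle u, e_1\rangle \geq 0$ and $\langle u, ke_1+de_2\rangle \geq 0$. Writing $u = a e_1^* + b e_2^*$, the first inequality is $a \geq 0$ and the second is $ka + db \geq 0$, i.e. $b \geq -\tfrac{k}{d}a$. So $\sigma$ is the cone spanned by the two edge vectors $e_2^*$ (from $a=0$) and the primitive vector along $b = -\tfrac{k}{d}a$, which, since $\gcd(d,k)=1$, is $d e_1^* - k e_2^*$. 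Hence $\sigma = \Cone(e_2^*,\, d e_1^* - k e_2^*)$.

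Next I would change to the basis $\{f_1, f_2\} := \{e_2^*,\, e_1^* - e_2^*\}$, which is a lattice basis of $N$ since the change-of-basis matrix has determinant $\pm 1$. I need to express the two generators of $\sigma$ in this basis. Clearly $e_2^* = f_1$. For the other generator, solve $e_1^* = f_1 + f_2$ and $e_2^* = f_1$, so $d e_1^* - k e_2^* = d(f_1+f_2) - k f_1 = (d-k)f_1 + d f_2$. Therefore $\sigma = \Cone(f_1,\, (d-k)f_1 + d f_2)$.

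Finally I would verify this presentation matches the definition of a $(d,d-k)$-cone from Lemma \ref{cone}: we need the second parameter $d'$ and the remainder $k'$ with $d' > k' > 0$ and $\gcd(d',k') = 1$. Here the coefficient of $f_2$ is $d$ and the coefficient of $f_1$ is $d - k$, and since $0 < k < d$ we have $0 < d-k < d$, so no reduction modulo $d$ is needed: $\sigma$ is a $(d, d-k)$-cone, with $\gcd(d, d-k) = \gcd(d,k) = 1$ automatic. I do not expect any real obstacle here; the only mild subtlety is confirming primitivity of the edge generator $d e_1^* - k e_2^*$ (which uses $\gcd(d,k)=1$) and checking that the stated basis $\{e_2^*, e_1^* - e_2^*\}$ is unimodular, both of which are immediate.
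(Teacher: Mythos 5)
Your computation is correct: dualizing $\Cone(e_1, ke_1+de_2)$ gives $\Cone(e_2^*,\, de_1^*-ke_2^*)$ (primitivity of the second generator using $\gcd(d,k)=1$), and rewriting in the unimodular basis $\{e_2^*,\,e_1^*-e_2^*\}$ yields $(d-k)f_1+df_2$ with $0<d-k<d$, which is exactly the normal form of a $(d,d-k)$-cone from Lemma \ref{cone}. The paper does not reprove this lemma but simply cites it from Dais, and your direct verification is the standard argument one would find there.
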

\begin{construction} \label{conres} \cite[Section 4]{Fractions}
Set $K(\sigma) =  \Conv (\sigma \cap (N \setminus \{ 0 \} ))$. Let $P(\sigma)$ be the boundary of $K(\sigma)$ and $V(\sigma)$ the set of vertices. $P(\sigma)$ is a connected polygonal line with endpoints coinciding with the generators of $\sigma$.

Let the primitive generators of $\sigma$ be $v_1,v_2$. Let $A_0 = v_1$. Define $A_i$, $i \ge 0$ as the sequence of lattice points as one goes along the enumerated edges of $P(\sigma)$. This is a finite sequence and the last point $v_2$ is denoted by $A_{r+1}$.

By construction  each pair $(A_i,A_{i+1})$ is a basis for $N$, since the triangle formed by $0, A_i, A_{i+1}$ has no other lattice points. Also the slopes of the set $\{ A_i \}$ have to increase with increasing $i$, since $A_i$ are on the boundary of a convex set. Thus we have relations:
\[ rA_{i-1} + sA_i = A_{i+1}, \]
\[ tA_i + uA_{i+1} = A_{i-1}, \]
which implies
\[ (rt+s)A_i + (ru-1)A_{i+1} = 0, \]
\[ rt+s = 0, \: ru = 1. \]
If $r=u=1$ we get $s=-t$ and
\[ sA_i + A_{i-1}=A_{i+1}. \]
But this contradicts the increasing of the slopes. Thus we must have $r=u=-1$ and $s=t$, resulting in the relation
\[ A_{i-1} + A_{i+1} = b_iA_i .\]
By convexity we must have $b_i \geq 2$.
\end{construction}

\begin{proposition} \cite[Prop. 4.3]{Fractions} \label{contFr}
By   Construction \ref{conres} for a $(d,k)$-cone $\sigma$, we get that $[b_1...,b_r]^- = \frac {d} {d-k}$.
\end{proposition}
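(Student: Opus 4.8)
The goal is to show that the Hirzebruch--Jung continued fraction produced by Construction~\ref{conres} from a $(d,k)$-cone $\sigma$ equals $\frac{d}{d-k}$. My plan is to work directly with the recursion $A_{i-1} + A_{i+1} = b_i A_i$ that Construction~\ref{conres} establishes, together with the boundary data $A_0 = v_1$ and $A_{r+1} = v_2$, and to track how a suitable ratio of coordinates evolves along the polygonal line $P(\sigma)$.

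First I would fix coordinates adapted to the cone. By Lemma~\ref{dk-cone}, a $(d,k)$-cone $\sigma^\vee$ in $M_\R$ corresponds to $\sigma$ being a $(d,d-k)$-cone in $N_\R$; so after choosing the basis given there we may assume $\sigma = \Cone(e_1, (d-k)e_1 + d e_2)$ in $N$ with $A_0 = e_1$ and $A_{r+1} = (d-k)e_1 + d e_2$. (Here I am using $d > d-k > 0$ and $\gcd(d,d-k)=1$, which hold since $\gcd(d,k)=1$ and $0<k<d$; the degenerate case $k = d-1$, i.e. a smooth-after-one-blowup situation, is handled the same way.) Write $A_i = (x_i, y_i)$. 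The second coordinate $y_i$ is strictly increasing (the $A_i$ lie on the boundary of the convex set $K(\sigma)$ with increasing slopes, starting on the $x$-axis), with $y_0 = 0$ and $y_{r+1} = d$. Consider the sequence of ratios $\lambda_i := y_{i+1}/y_i$ for $i \geq 1$, or more robustly the pair $(y_{i-1}, y_i)$; the recursion gives $y_{i+1} = b_i y_i - y_{i-1}$, which is exactly the recursion governing convergents of a continued fraction.

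The key step is then a backward induction identifying $[b_i, b_{i+1}, \dots, b_r]^-$ with $y_{r+1}/\big(y_{r+1} \text{ "shifted"}\big)$ — concretely, I would prove by induction on $j = r, r-1, \dots, 1$ that
\[
[b_j, b_{j+1}, \dots, b_r]^- \;=\; \frac{y_{j}^{\,*}}{y_{j-1}^{\,*}},
\]
where $y^*$ denotes the second-coordinate sequence of the \emph{reversed} polygonal line (equivalently, run the same construction on the cone with $v_1, v_2$ swapped). The base case $j = r$ reads $[b_r]^- = b_r = y_r^*/y_{r-1}^*$, which follows from $A_{r-1} + A_{r+1} = b_r A_r$ evaluated in the reversed indexing where $A_{r+1}$ plays the role of the starting axis vector. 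The inductive step is the standard continued-fraction identity $[b_j, \dots, b_r]^- = b_j - 1/[b_{j+1}, \dots, b_r]^-$ combined with the recursion $y_{j+1}^* = b_j y_j^* - y_{j-1}^*$. Finally, at $j=1$ the reversed construction has starting vector $A_{r+1} = (d-k)e_1 + d e_2$ and terminal vector $A_0 = e_1$; reading off the relevant coordinates (the "height" of the terminal vertex in the direction transverse to the starting edge) gives numerator $d$ and denominator $d-k$, so $[b_1, \dots, b_r]^- = \frac{d}{d-k}$.

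The main obstacle I anticipate is bookkeeping rather than conceptual: making precise which linear functional on $N$ plays the role of "$y$-coordinate" after one reverses the orientation of the polygonal line, and checking that the two endpoint vectors contribute exactly $d$ and $d-k$ and not, say, $d$ and $k$ (the $d-k$ versus $k$ distinction is precisely why Lemma~\ref{dk-cone} is invoked, and getting it wrong would produce $\frac{d}{k}$ instead). A clean way to sidestep some of this is to avoid reversal altogether: instead prove directly by induction on $i$ that $A_i = \frac{1}{\text{(something)}}(\,p_i\, v_1 + q_i\, v_2\,)$ where $q_i/p_i$ are the convergents of $[b_1, \dots, b_i]^-$ read from the \emph{front}, using $A_0 = v_1$ ($q_0 = 0$), $A_1$ fixed by primitivity together with the cone shape, and the three-term recursion; then specialize at $i = r+1$ where $A_{r+1} = v_2$ forces the convergent to be $1$ in the appropriate normalization, and unwinding the normalization yields $\frac{d}{d-k}$. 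Either route reduces the proposition to elementary properties of HJ continued fractions plus the geometry already built in Construction~\ref{conres}.
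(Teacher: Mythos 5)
A preliminary remark: the paper states this proposition without proof, citing \cite[Prop. 4.3]{Fractions}, so there is no in-paper argument to compare against; what follows assesses your proposal on its own terms. Your overall machinery is the right one --- exploit the three-term recursion $A_{i-1}+A_{i+1}=b_iA_i$ together with the boundary data $A_0=v_1$, $A_{r+1}=v_2$, and run a backward induction on $[b_j,\dots,b_r]^-=b_j-1/[b_{j+1},\dots,b_r]^-$. But there is a genuine error in your normalization, and it sits exactly at the spot you yourself flag as the danger point. Proposition \ref{contFr} asserts that the cone $\sigma$ \emph{on which Construction \ref{conres} is performed} is a $(d,k)$-cone; no dualization is involved, and invoking Lemma \ref{dk-cone} to replace $\sigma$ by a $(d,d-k)$-cone misreads the hypothesis (that lemma is needed later, in Corollary \ref{bi}, where one starts from a cone in $M_\R$ and resolves in $N_\R$ --- not here). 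Concretely, with your normalization $\sigma=\Cone(e_1,(d-k)e_1+de_2)$ the construction produces $d/k$, not $d/(d-k)$. Sanity-check against the paper's own Figure \ref{83}: the construction on the $(8,3)$-cone $\Cone((1,0),(3,8))$ gives $[2,3,2]^-=8/5=d/(d-k)$, whereas on the $(8,5)$-cone $\Cone((1,0),(5,8))$ one finds $A_i=(1,0),(1,1),(2,3),(5,8)$ and $[3,3]^-=8/3$. So your concluding step, ``reading off numerator $d$ and denominator $d-k$,'' cannot be justified in the coordinates you chose; carried out correctly it yields denominator $k$, i.e.\ the wrong statement.

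The repair is to keep $\sigma=\Cone(e_1,ke_1+de_2)$ and to make the endpoint evaluation precise rather than read it off informally. Writing $A_i=(x_i,y_i)$, set $c_i:=\det(A_i,A_{r+1})=dx_i-ky_i$. Linearity of the determinant turns the three-term recursion into $c_{i-1}+c_{i+1}=b_ic_i$; unimodularity and orientation of the pair $(A_r,A_{r+1})$ give $c_r=1$ and $c_{r+1}=0$, so backward induction yields $[b_j,\dots,b_r]^-=c_{j-1}/c_j$. Finally $c_0=d$ and $c_1=d-k$, because $A_0=(1,0)$ and $A_1=(1,1)$ (the latter needs the small observation that $(1,1)\in\sigma$, i.e.\ $d>k$, together with $\det(A_0,A_1)=1$ forcing $y_1=1$ and convexity forcing $x_1=1$). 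This functional $c_i$ is exactly your ``height transverse to the starting edge,'' but anchored at $A_{r+1}$ and evaluated at $A_0$ and $A_1$; with that bookkeeping done in the correct coordinates the proposition follows, and your proposed reversal of the polygonal line becomes unnecessary.
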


\begin{example}
 In Figure \ref{83}  we see  Construction \ref{conres} for $(d,k)=(8,3)$. The lattice points $A_i$ are the following: 
\[ A_0 =  \begin{bmatrix} 1 \\ 0 \end{bmatrix}, \hspace{6 pt} A_1 =  \begin{bmatrix} 1 \\ 1 \end{bmatrix}, \hspace{6 pt} A_2 =  \begin{bmatrix} 1 \\ 2 \end{bmatrix}, \hspace{6 pt} A_3 =  \begin{bmatrix} 2 \\ 5 \end{bmatrix}, \hspace{6 pt} A_4 =  \begin{bmatrix} 3 \\ 8 \end{bmatrix}. \]
The continued fraction $\frac {d} {d-k}=\frac {8} {5}$ equals $[2,3,2]^{-1}$. By Proposition \ref{contFr} this  is equivalent to the fact that 
\[ A_0 + A_2 = 2 A_1, \hspace{6 pt}  A_1 + A_3 = 3 A_2, \hspace{6 pt}  A_2 + A_4 = 2 A_3. \] 
\end{example}
\begin{figure} 
\begin{tikzpicture} [scale=0.9]
  \foreach \x in {-1,0,...,3}{
      \node[draw,circle,inner sep=1pt,fill] at (\x,0) {};
    }
  \foreach \y in {-1,0,...,8}{
    \node[draw,circle,inner sep=1pt,fill] at (0,\y) {};
  }
 \node[draw,circle,inner sep=1pt,fill] at (3,8) {};
\draw [->] (-2,0) -- (4,0);
\draw [->] (0,-1) -- (0,9);
\draw [->, ultra thick] (0,0) -- (3,8);
\draw [->, ultra thick] (0,0) -- (1,0);
\node[draw,circle,inner sep=1pt,fill] at (1,1) {};
\node[draw,circle,inner sep=1pt,fill] at (1,2) {};
 \node[draw,circle,inner sep=1pt,fill] at (2,5) {};
 \draw [-, thick] (0,0) -- (1,1);
\draw [-, thick] (0,0) -- (1,2);
\draw [-, thick] (0,0) -- (2,5);
\end{tikzpicture}
\caption{Construction \ref{conres} for (d,k)=(8,3). } \label{83}
\end{figure}
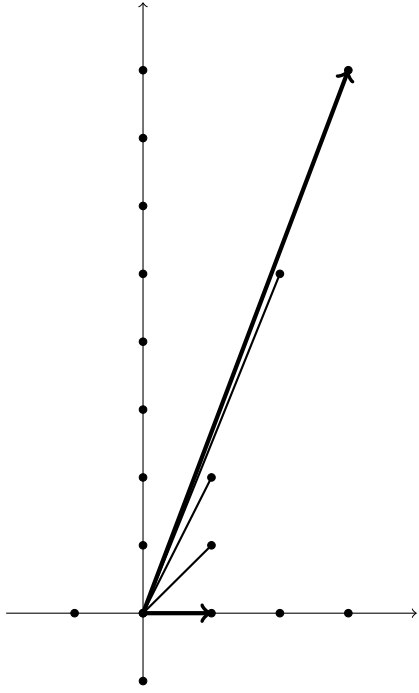
 Given $\sigma$, construct the points $A_i$ as in Construction \ref{conres}. Let $\sigma_i = \Cone(OA_i)$. Let $\Sigma$ be the fan with $2$-dimensional cones $\Cone(\sigma_{i},\sigma_{i-1})$ for $i=0,...,r$. The identity map on the lattice $N$ induces toric morphisms $U_{\sigma_i} \rightarrow U_\sigma$ which glue to a morphism $\phi: X_\Sigma \rightarrow U_\sigma$.
\begin{proposition} \label{res} \cite[Thm. 3.20]{Dais1}
The morphism $\phi$ is a minimal resolution of singularities for $U_\sigma$ with r exceptional components $E_1,...,E_r$ and $E_i^2=-b_i$.
\end{proposition}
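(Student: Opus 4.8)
The plan is to verify the four defining properties in turn: that $X_\Sigma$ is smooth, that $\phi$ is proper and birational (hence a resolution), that the contracted curves are exactly $E_1,\dots,E_r$ with each $E_i\cong\p^1$, and that the resolution is minimal. For smoothness, I would observe that every maximal cone of $\Sigma$ has the form $\Cone(A_{i-1},A_i)$ and that, by Construction~\ref{conres}, the triangle with vertices $0,A_{i-1},A_i$ contains no lattice point besides its vertices; Lemma~\ref{basis} then gives that $\{A_{i-1},A_i\}$ is a $\Z$-basis of $N$, so $U_{\Cone(A_{i-1},A_i)}\cong\C^2$ and $X_\Sigma$ is smooth. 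For properness and birationality: $\phi$ is induced by $\mathrm{id}_N$ and restricts to the identity on the torus, hence is birational; and since the cones $\Cone(A_{i-1},A_i)$, $i=1,\dots,r+1$, subdivide $\sigma$ (the $A_i$ run monotonically along $\partial K(\sigma)$ from $v_1$ to $v_2$), we have $|\Sigma|=\sigma=\mathrm{id}_N^{-1}(\sigma)$, so $\phi$ is proper by the toric properness criterion \cite[Thm.~3.4.11]{Cox}. A proper birational morphism from a smooth variety is a resolution of singularities.

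Next I would pin down the exceptional set. The proper faces $\{0\}$, $\Cone(v_1)$, $\Cone(v_2)$ of $\sigma$ are smooth cones that already belong to $\Sigma$, so $\phi$ is an isomorphism over $U_{\Cone(v_1)}\cup U_{\Cone(v_2)}=U_\sigma\setminus\{\gamma_\sigma\}$, where $\gamma_\sigma$ is the torus-fixed point; since $U_\sigma$ is smooth away from $\gamma_\sigma$, this is exactly the smooth locus, and $\phi^{-1}(\gamma_\sigma)=\bigcup_{i=1}^r E_i$, where $E_i:=V(\sigma_i)$ is the orbit closure of the new ray $\sigma_i=\Cone(A_i)$. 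Each $\sigma_i$ with $1\le i\le r$ is an interior ray of $\sigma$, flanked by precisely the two maximal cones $\Cone(A_{i-1},A_i)$ and $\Cone(A_i,A_{i+1})$, so its star is a complete one-dimensional fan and $E_i$ is a complete smooth toric curve, i.e. $E_i\cong\p^1$; in particular there are exactly $r$ exceptional components.

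For the self-intersections I would pass to a completion: extend $\Sigma$ to a complete smooth fan that leaves every cone $\Cone(A_{i-1},A_i)$ unchanged (this modifies nothing in a neighbourhood of any $E_i$, hence does not change $E_i^2$). On the resulting complete smooth toric surface the wall relation across the ray $\sigma_i$, with neighbours $\sigma_{i-1}$ and $\sigma_{i+1}$, reads $A_{i-1}+A_{i+1}=-(E_i^2)\,A_i$. Comparing this with the relation $A_{i-1}+A_{i+1}=b_iA_i$ established in Construction~\ref{conres} gives $E_i^2=-b_i$. Finally, for minimality, Construction~\ref{conres} also shows $b_i\ge 2$ for every $i$, so $E_i^2\le -2$; hence the exceptional locus $E_1\cup\cdots\cup E_r$ contains no $(-1)$-curve, and by Castelnuovo's contractibility criterion $\phi$ is the minimal resolution of $U_\sigma$.

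The one point that I expect to require genuine care is the self-intersection computation, namely justifying that the local wall relation actually computes $E_i^2$ even though $X_\Sigma$ is not complete; the completion argument above (or, equivalently, working directly with the intersection numbers of the compact curves $E_i$ over the affine base $U_\sigma$) handles this cleanly, and once $E_i^2=-b_i$ is in place the bound $b_i\ge 2$ makes minimality immediate.
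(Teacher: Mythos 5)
Your proof is correct. Note that the paper does not actually prove this proposition — it is quoted from Dais \cite[Thm.~3.20]{Dais1} — so there is no in-paper argument to compare against; your write-up is the standard one. All the key steps check out: smoothness of each chart $U_{\Cone(A_{i-1},A_i)}$ follows from the empty-triangle condition in Construction~\ref{conres} together with Lemma~\ref{basis}; properness and birationality follow from $|\Sigma|=\sigma$ and the identity on the torus; the identification of the exceptional curves $E_i=V(\Cone(A_i))\cong\p^1$ for the $r$ interior rays is right; the wall relation $A_{i-1}+A_{i+1}=-(E_i^2)A_i$ compared with $A_{i-1}+A_{i+1}=b_iA_i$ gives $E_i^2=-b_i$ (and you correctly flag and resolve the only delicate point, that $X_\Sigma$ is not complete, either by completing the fan away from the $E_i$ or by noting the computation is local around each compact $E_i$); and minimality follows from $b_i\ge 2$, which rules out $(-1)$-curves.
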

By doing this cone by cone, one obtains a global resolution of singularities by glueing the local constructions. Combining Proposition \ref{res} and Proposition \ref{freu} we obtain:
\begin{corollary} \label{bi}
Given a $(d,k)$-cone in $M_{\mathbb{R}}$ (equivalently a $(d,d-k)$-cone in $N_{\mathbb{R}}$), let $v$ be the torus fixed point of $U_\sigma$. Write
\[ \frac {d} {k} = b_1 - \frac {1} {b_2 - \frac{1} {... - \frac{1} {b_r} }} . \]
Then $\Eu(v)=\sum_{i=1}^r (2-b_i)$.
\end{corollary}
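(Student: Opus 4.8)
The plan is to combine the explicit minimal resolution of Proposition~\ref{res} with the resolution-theoretic formula for the local Euler obstruction in Proposition~\ref{freu}, the only care needed being the bookkeeping between the $(d,k)$ and $(d,d-k)$ conventions. First I would move from $M_\R$ to $N_\R$: by hypothesis (equivalently, by Lemma~\ref{dk-cone}), if $\sigma^\vee\subset M_\R$ is a $(d,k)$-cone then $\sigma\subset N_\R$ is a $(d,d-k)$-cone. The affine toric surface $U_\sigma$ is then the normal cyclic quotient singularity $\C^2/\mu_d$, so $v$ (its torus fixed point) is exactly the kind of singularity to which Proposition~\ref{freu} applies.

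Next I would run Construction~\ref{conres} on the cone $\sigma\subset N_\R$, producing the lattice points $A_0,\dots,A_{r+1}$ along $P(\sigma)$ and the integers $b_1,\dots,b_r\geq 2$ with $A_{i-1}+A_{i+1}=b_iA_i$. Since $\sigma$ is a $(d,d-k)$-cone, Proposition~\ref{contFr} (applied to the $(D,K)$-cone $\sigma$ with $(D,K)=(d,d-k)$) gives
\[ [b_1,\dots,b_r]^- \;=\; \frac{D}{D-K} \;=\; \frac{d}{d-(d-k)} \;=\; \frac{d}{k}, \]
so the $b_i$ produced by the construction are precisely those in the statement of the corollary (note that $d>k>0$ forces $d\geq 2$, so $\sigma$ is genuinely singular and $r\geq 1$). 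Then Proposition~\ref{res} tells us that the refinement of $\sigma$ by the rays through $A_1,\dots,A_r$ gives a \emph{minimal} resolution $\phi\colon X_\Sigma\to U_\sigma$ whose exceptional divisor has irreducible components $E_1,\dots,E_r$ with $E_i\cdot E_i=E_i^2=-b_i$.

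Finally, feeding this minimal resolution into Proposition~\ref{freu} yields
\[ \Eu(v)\;=\;\sum_{i=1}^r\bigl(2+E_i\cdot E_i\bigr)\;=\;\sum_{i=1}^r\bigl(2-b_i\bigr), \]
which is the claimed formula. The step most prone to error — rather than genuinely difficult — is the convention tracking: one must be sure that the continued fraction governing the self-intersections of the exceptional curves of $U_\sigma$ is $\tfrac{d}{k}$ and not $\tfrac{d}{d-k}$, and that the resolution of Proposition~\ref{res} is the minimal one (so that Proposition~\ref{freu}, which is stated for minimal resolutions of cyclic surface singularities, is legitimately applicable). Everything else is direct substitution.
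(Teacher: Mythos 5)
Your argument is correct, and it is in fact the derivation the paper itself gestures at when it first states the result ("Combining Proposition~\ref{res} and Proposition~\ref{freu} we obtain\dots"); your convention-tracking is also right: running Construction~\ref{conres} on the $(d,d-k)$-cone $\sigma\subset N_\R$ produces, via Proposition~\ref{contFr}, the continued fraction $\tfrac{d}{d-(d-k)}=\tfrac{d}{k}=[b_1,\dots,b_r]^-$, and Proposition~\ref{res} then gives $E_i^2=-b_i$ for exactly these $b_i$. However, the proof the paper actually writes out for Corollary~\ref{bi} deliberately avoids Proposition~\ref{freu}, which is an external result of Gonz\'alez-Sprinberg quoted without proof; the point of the paper's argument is to rederive the formula purely from the Matsui--Takeuchi machinery. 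Concretely, the paper starts from Lemma~\ref{euob} ($\Eu(v)=1-c$, where $c$ counts the interior lattice points of $\sigma^\vee$ lying on the boundary of $\Conv((\sigma^\vee\setminus\{0\})\cap M)$), identifies $c$ with the number $r$ of intermediate points $A_1,\dots,A_r$ of Construction~\ref{conres} applied to the $(d,d-k)$-cone $\sigma^\vee$ in $M_\R$ with fraction $\tfrac{d}{d-k}=[c_1,\dots,c_r]^-$, and then invokes the duality Lemma~\ref{length} relating the length $r$ of that expansion to $1+\sum_i(b_i-2)$ for the complementary fraction $\tfrac{d}{k}=[b_1,\dots,b_s]^-$. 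Your route buys brevity at the cost of relying on Proposition~\ref{freu} as a black box; the paper's route is longer (it needs the technical Lemma~\ref{length}) but is self-contained within the toric framework and, as a byproduct, reproves Proposition~\ref{freu} in the toric case. Both are valid proofs of the statement.
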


We give our own proof of this in the toric case using the formula of Matsui and Takeuchi, without refering to Proposition \ref{freu}. We need a technical lemma:
\begin{lemma} \label{length} \cite[Lemma 1.22]{Oda}
Let $\frac {d}{k} =[b_1,...,b_s]^-$ and $\frac{d}{d-k}=[c_1,...,c_r]^-$. Then
\[ r= 1 + \sum_{i=1}^r (b_i-2). \]
\end{lemma}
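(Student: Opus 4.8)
The plan is to prove the identity $r = 1 + \sum_{i=1}^{s}(b_i - 2)$ relating the lengths of the two Hirzebruch--Jung expansions of $d/k$ and $d/(d-k)$ by induction on $d$ (equivalently, on the number of blow-ups needed in the resolution). The base case will be $d = 2$, $k = 1$: then $d/k = 2 = [2]^-$, so $s = 1$ and $\sum(b_i - 2) = 0$, while $d/(d-k) = 2/1 = [2]^-$, so $r = 1$, and the formula reads $1 = 1 + 0$. (One should also dispose of the trivial smooth case $d = k = 1$ separately, or simply start the induction at the first singular step.)

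For the inductive step I would use the standard recursion for Hirzebruch--Jung continued fractions under "subtracting one from the leading coefficient". Concretely, write $d/k = [b_1, b_2, \dots, b_s]^-$. There is a well-known reciprocity (it is exactly what underlies Construction~\ref{conres} and Proposition~\ref{contFr}): the two continued fractions of $d/k$ and $d/(d-k)$ are related by the combinatorial operation that takes $[b_1, \dots, b_s]^-$ and produces $[c_1, \dots, c_r]^-$ where the $c_j$ are obtained by the "Riemenschneider point diagram" / Hirzebruch duality. The cleanest way to run the induction is to peel off one exceptional curve: if $b_1 = 2$, then $[2, b_2, \dots, b_s]^- = d/k$ corresponds to $d/(d-k) = [b_2 + 1, b_3, \dots, b_s]^-$ wait — more carefully, I would instead use the reduction on the \emph{other} end, or use that $d/(d-k) = [\,\underbrace{2,\dots,2}_{b_1 - 2}, b_2 - 1, \dots\,]^-$ via the identity $\frac{d}{d-k} = b_1' - \cfrac{1}{\ddots}$ with $b_1' = 2$ whenever $b_1 > 2$. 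The key elementary fact I will invoke is: prepending a $2$ to an HJ expansion, $[2, b_2, \dots, b_s]^-$, equals $\frac{p}{p - q}$ where $[b_2 - 1, b_3, \dots, b_s]^- = \frac{q'}{\,\cdot\,}$... I will phrase this via the two operations $[b_1, \dots, b_s]^- \mapsto [2, b_1, \dots, b_s]^-$ (which sends $d/k \mapsto (d+k)/k$... ) and $[b_1, \dots, b_s]^- \mapsto [b_1 + 1, b_2, \dots, b_s]^-$ (which sends $d/k \mapsto (d+k)/d$... ), since every HJ string with all entries $\geq 2$ is built from $[\,]$ (or from $[2]$) by these two moves, and they are exactly dual to each other under $k \leftrightarrow d - k$. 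Tracking how $s$, $r$, and $\sum(b_i - 2)$ change under each move then gives the identity immediately: the first move increases $s$ by $1$ and leaves $\sum(b_i - 2)$ unchanged while (on the dual side) increasing $r$'s governing quantity by $1$; the second move leaves $s$ unchanged, increases $\sum(b_i - 2)$ by $1$, and increases $r$ by $1$.

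Alternatively — and this may be the slicker route to actually write down — I would read the identity off geometrically from Construction~\ref{conres} and Proposition~\ref{res}: $r$ is the number of exceptional curves $E_1, \dots, E_r$ in the minimal resolution of the $(d, d-k)$-cone in $N_\R$, i.e. the number of interior lattice points $A_1, \dots, A_r$ on the polygonal boundary $P(\sigma)$. On the other hand, the self-intersections on the dual side are the $b_i$ with $d/k = [b_1, \dots, b_s]^-$, and the relation $A_{i-1} + A_{i+1} = b_i A_i$ together with the count of how the "multiplicities" $b_i$ distribute along the chain yields $r = 1 + \sum_{i=1}^{s}(b_i - 2)$ by a telescoping/counting argument on the Euclidean-algorithm steps. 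I expect the main obstacle to be purely bookkeeping: pinning down precisely which of the several sign/duality conventions ($d/k$ versus $d/(d-k)$, $N_\R$ versus $M_\R$, the $(d,k)$ versus $(d, d-k)$ convention flagged after Definition of $(d,k)$-cone) is in force, so that the induction is set up with the correct pairing. Once the conventions are fixed, the induction itself is a two-line check. I would therefore write the proof as a short induction using the two generating moves above, citing \cite{Oda} or Proposition~\ref{contFr} for the reciprocity and keeping the convention bookkeeping explicit.
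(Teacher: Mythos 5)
The paper gives no proof of this lemma at all: it is quoted verbatim from Oda's book (Lemma~1.22 there) and used as a black box in the proof of Corollary~\ref{bi}, so there is nothing internal to compare against. Judged on its own, your first route is essentially the standard (and correct) proof of Riemenschneider duality, but you have left the crux in a hedged, unverified state. To make it airtight you should pin down the arithmetic of the two moves: prepending a $2$ sends $d/k=[b_1,\dots,b_s]^-$ to $2-k/d=(2d-k)/d$, i.e.\ $(d,k)\mapsto(2d-k,d)$ with $d-k$ unchanged, while incrementing the leading entry sends $d/k$ to $(d+k)/k$. A one-line check then shows these two moves really are exchanged under $k\leftrightarrow d-k$: on the dual fraction $d/(d-k)$ the first move acts as $(d,d-k)\mapsto(2d-k,d-k)=((d)+(d-k),\,d-k)$, which is the increment move, and the second acts as $(d,d-k)\mapsto(d+k,d)=(2d-(d-k),\,d)$, which is the prepend move. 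With that verified, your bookkeeping (prepend: $s\mapsto s+1$, $\sum(b_i-2)$ and $r$ fixed; increment: $s$ fixed, $\sum(b_i-2)\mapsto\sum(b_i-2)+1$, $r\mapsto r+1$) together with the base case $[2]^-\leftrightarrow[2]^-$ and the easy observation that every HJ string with entries $\geq 2$ is reachable from $[2]$ by these moves (build $[b_s]$, then prepend and increment) finishes the proof. Your second, geometric route is vaguer and I would not rely on it as written. Finally, note that the statement as printed has a typo — the sum must run to $s$, not $r$ — which your proposal silently and correctly fixes; it is worth flagging that explicitly.
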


\begin{proof}[Proof of Corollary \ref{bi}]
Given any normal toric surface, consider a  vertex $v$. We have that $\Eu(v) =  1 - c$ where $c$ is the number of internal lattice points of $\sigma^\vee$ which are boundary points of $\Conv((\sigma^\vee \setminus \{0\}) \cap M)$. Writing $\frac{d}{d-k}=[c_1,...,c_r]^-$ we have by Construction \ref{conres} and Proposition \ref{contFr} that $c = r$. By Lemma \ref{length} $\Eu(v) = \sum_{i=1}^r (2-b_i)$.
\end{proof}

\begin{remark}
If the cone is smooth, it is isomorphic to $\Cone (e_1,e_2)$, if we by convention set the corresponding continued fraction equal to $[1]^-$, then all formulas for the Euler-obstructions are true also for smooth cones.
\end{remark}

Combining the above we obtain:

\begin{proposition} \label{degdual}
Assume $P$ is a $2$-dimensional lattice polytope. Construct the minimal resolution of singularities of $X_{\Sigma_P}$ and let $E_{v,i}$ be the exceptional divisors for the singularities $v$. Let
\[ \delta = 3\Vol(P) - 2 E(P) + \sum_{v \text{ vertex } \in P} \sum_{i} (2+E_{v,i}^2). \]
Then $X_P^\vee$ is a hypersurface if and only if $\delta$ is non--zero. Assuming $X_P^\vee$ is a hypersurface, it has degree $\delta$.

More explicitly, let $\sigma_1,...,\sigma_r$ be the maximal cones of $\Sigma_P$. Assume $\sigma_i$ is a $(d_i,d_i-k_i)$-cone and write $\frac{d_i}{k_i} = [b_{i,1},...,b_{i,s_i}]^-$. Then
\[ \delta = 3\Vol(P) - 2 E(P) + \sum_{i=1}^r \sum_{j=1}^{s_i} (2-b_{i,j}).  \]
\end{proposition}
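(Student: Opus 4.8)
The plan is to combine Proposition~\ref{dualformula}, specialised to the normal toric surface $X_{P\cap M}$, with Corollary~\ref{bi} and Proposition~\ref{res}; essentially all the ingredients have been prepared, and what remains is to assemble them while keeping the bookkeeping straight.

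First I would apply Proposition~\ref{dualformula} with $A = P\cap M$. As $P$ is $2$-dimensional it is very ample, so $X_A = X_{P\cap M}\simeq X_P$ is normal, and Remark~\ref{defect} applies to it. I would split the combinatorial expression $\sum_{Q\preceq P}(-1)^{\codim Q}(\dim Q+1)\Eu(Q)\Vol(Q)$ according to $\dim Q$. The term $Q=P$ has $\codim Q = 0$, $\dim Q+1 = 3$ and $\Eu(P)=1$, giving $3\Vol(P)$. An edge $e$ has $\codim e = 1$, $\dim e + 1 = 2$, and by Corollary~\ref{cod} together with Lemma~\ref{index1} we have $\Eu(e) = i(P,e) = 1$; since $\sum_e\Vol(e) = E(P)$, the edges contribute $-2E(P)$. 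A vertex $v$ has $\codim v = 2$, $\dim v + 1 = 1$ and $\Vol(v) = 1$, so the vertices contribute the sum $\sum_v\Eu(v)$ over the vertices of $P$. This computation, carried out on the combinatorial expression regardless of whether $X_P^\vee$ is a hypersurface, shows it equals $3\Vol(P) - 2E(P) + \sum_v\Eu(v)$; by Remark~\ref{defect} this number is non-zero precisely when $X_P^\vee$ is a hypersurface, in which case it is $\deg X_P^\vee$.

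Next I would identify $\sum_i(2 + E_{v,i}^2)$ with $\Eu(v)$ for each vertex $v$. The cone $C_v = \Cone(P\cap M - v)$ is the dual $\sigma_v^\vee$ of a maximal cone $\sigma_v$ of $\Sigma_P$. If $\sigma_v^\vee$ is smooth then $v$ is a smooth point, $\Eu(v)=1$, and the identity $\Eu(v) = \sum_i(2+E_{v,i}^2) = 1$ is read off the convention $\tfrac{d}{k} = [1]^-$ recorded in the remark following the proof of Corollary~\ref{bi}. If $\sigma_v^\vee$ is singular, Lemma~\ref{cone} writes it as a $(d,k)$-cone in $M_\R$, so by Lemma~\ref{dk-cone} the cone $\sigma_v$ is a $(d,d-k)$-cone in $N_\R$. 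Construction~\ref{conres} applied to $\sigma_v$ produces the lattice points $A_0,\dots,A_{r+1}$ and integers $b_1,\dots,b_r$ with $A_{i-1}+A_{i+1} = b_iA_i$; by Proposition~\ref{contFr} we get $[b_1,\dots,b_r]^- = \tfrac{d}{d-(d-k)} = \tfrac{d}{k}$, and by Proposition~\ref{res} the minimal resolution of $U_{\sigma_v}$ has exceptional curves $E_{v,1},\dots,E_{v,r}$ with $E_{v,i}^2 = -b_i$. On the other hand Corollary~\ref{bi}, applied to the $(d,k)$-cone $\sigma_v^\vee$, gives $\Eu(v) = \sum_{i=1}^r(2-b_i) = \sum_{i=1}^r(2+E_{v,i}^2)$. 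Summing over the vertices and combining with the previous paragraph yields $\delta = 3\Vol(P) - 2E(P) + \sum_v\Eu(v)$, hence $\delta$ is non-zero iff $X_P^\vee$ is a hypersurface and $\delta = \deg X_P^\vee$ in that case; this is the first assertion.

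For the explicit formula I would simply substitute the continued-fraction description of $\Eu(v)$ into $\delta$. Writing $\sigma_1,\dots,\sigma_r$ for the maximal cones of $\Sigma_P$ and $v_i$ for the vertex of $P$ with $\sigma_i^\vee = C_{v_i}$, the hypothesis that $\sigma_i$ is a $(d_i,d_i-k_i)$-cone means by Lemma~\ref{dk-cone} that $\sigma_i^\vee$ is a $(d_i,k_i)$-cone in $M_\R$; Corollary~\ref{bi} with $\tfrac{d_i}{k_i} = [b_{i,1},\dots,b_{i,s_i}]^-$ then gives $\Eu(v_i) = \sum_{j=1}^{s_i}(2 - b_{i,j})$, whence $\delta = 3\Vol(P) - 2E(P) + \sum_{i=1}^r\sum_{j=1}^{s_i}(2 - b_{i,j})$. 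The only delicate point throughout, and the one I would be most careful about, is keeping the $(d,k)$-versus-$(d,d-k)$ conventions straight when passing between a cone in $N_\R$ and its dual in $M_\R$, and correspondingly matching the $b_i$ appearing in the formula for $\Eu(v)$ with the self-intersection numbers $E_{v,i}^2$ of the exceptional curves; once those identifications are lined up the argument is purely formal.
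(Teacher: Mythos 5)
Your proposal is correct and follows essentially the same route as the paper, which simply assembles Proposition~\ref{dualformula} (specialised to surfaces via Lemma~\ref{index1} and Corollary~\ref{cod}), Corollary~\ref{bi}, Proposition~\ref{res}, and Remark~\ref{defect} under the phrase ``combining the above.'' Your careful tracking of the $(d,k)$ versus $(d,d-k)$ conventions between $M_\R$ and $N_\R$ is exactly the bookkeeping the paper relies on, and it lines up correctly.
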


We can classify which normal toric surfaces are smooth or Gorenstein  using the Euler obstruction.

\begin{corollary} \cite[Cor. 5.7]{MatTak}
For any point $v$ in a normal toric surface we have that $v$ is smooth if and only if $\Eu(v)=1$.
\end{corollary}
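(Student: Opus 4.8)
The plan is to derive this from Corollary~\ref{bi} together with the combinatorial description of smooth cones. Since smoothness and the value of the local Euler obstruction are both local and constant on torus orbits, and since the only possibly non-smooth points of a normal toric surface are the torus-fixed points $v$ corresponding to the maximal cones $\sigma$, it suffices to check: for a maximal cone $\sigma$ with torus-fixed point $v$, the point $v$ is smooth if and only if $\Eu(v)=1$. One direction is immediate from the general properties of the Euler obstruction recalled in Section~\ref{secEu}: if $v$ is smooth then $\Eu(v)=1$.

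For the converse, I would argue by contraposition. Suppose $v$ is singular, so $\sigma$ is a singular $2$-dimensional cone, say a $(d,d-k)$-cone in $N_\R$ with $d>k>0$, $\gcd(d,k)=1$, and $d\geq 2$. Write $\frac{d}{k}=[b_1,\dots,b_r]^-$ as in Corollary~\ref{bi}, so that $\Eu(v)=\sum_{i=1}^r(2-b_i)$. By Construction~\ref{conres} each $b_i\geq 2$, hence every summand $2-b_i$ is $\leq 0$, and therefore $\Eu(v)\leq 0<1$. In particular $\Eu(v)\neq 1$. (Alternatively, one can run the same argument through Lemma~\ref{euob}: $\Eu(v)=1-c$ where $c$ counts certain internal lattice points of $\sigma^\vee$, and $c\geq 1$ precisely when $\sigma^\vee$ — equivalently $\sigma$ — is singular, since a cone is smooth exactly when $\sigma^\vee\cap M$ is generated by a lattice basis, which by Lemma~\ref{basis} happens iff $\Conv((\sigma^\vee\setminus\{0\})\cap M)$ picks up no internal lattice point of $\sigma^\vee$.)

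Combining the two directions gives: $v$ smooth $\iff \Eu(v)=1$ at every torus-fixed point, and at smooth (non-fixed) points both sides hold trivially, which proves the corollary.

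The only subtle point — and the one I would be most careful about — is making sure the inequality $\Eu(v)\leq 0$ is \emph{strict}ly below $1$ for \emph{every} singular cone, i.e. that one never has $r=0$ for a singular cone (so that the empty sum would give $\Eu(v)=0$, still fine) and that the case analysis genuinely covers all singular $2$-dimensional cones. This is handled by Lemma~\ref{cone}, which normalizes any singular cone to $(d,k)$-form with $d>k>0$, and by the remark that a smooth cone corresponds to the degenerate continued fraction $[1]^-$; so the genuinely singular cones are exactly those with $d\geq 2$, for which $r\geq 1$ and $b_1\geq 2$, forcing $\Eu(v)\leq 0$. Thus there is no real obstacle beyond bookkeeping; the content is entirely carried by Corollary~\ref{bi} (or Lemma~\ref{euob}) and the standard combinatorial criterion for smoothness of a cone.
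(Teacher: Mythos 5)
Your proposal is correct and follows essentially the same route as the paper: the paper's proof is exactly the one-line observation that Corollary~\ref{bi} gives $\Eu(v)=\sum_{i=1}^r(2-b_i)$ with every $b_i\geq 2$ from Construction~\ref{conres}, so a singular point has $\Eu(v)\leq 0$. Your extra bookkeeping about the degenerate case and the alternative via Lemma~\ref{euob} are fine but not needed beyond what the paper records.
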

\begin{proof}
This follows directly from Corollary \ref{bi} and the fact that the $b_i$ in Construction \ref{conres} are always $\geq 2$.
\end{proof}

\begin{remark}
Let $A$ be the lattice points from Example \ref{ind1}
\[ \begin{bmatrix} 0 \\ 0 \end{bmatrix} \begin{bmatrix} 0 \\ 1 \end{bmatrix} \begin{bmatrix} 1 \\ 1 \end{bmatrix} \begin{bmatrix} 2 \\ 0 \end{bmatrix} \begin{bmatrix} 2 \\ 1 \end{bmatrix}.  \]
Let $v$ denote the origin and $e_1,e_2$ the edges of $\Conv(A)$ containing $v$. Then we have that $\Eu(v) = i(P,e_1)+i(P,e_2)-\RSV_\Z(P,v) = 2+1-2=1$, even if $v$ corresponds to a singular point of the non-normal variety $X_A$.
\end{remark}

\begin{corollary}
A singular point on a normal toric surface has  Euler-obstruction $0$ if and only if the surface is Gorenstein in a neighbourhood of the point.
\end{corollary}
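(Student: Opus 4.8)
The plan is to localize the question at $v$ and then play Corollary~\ref{bi} off against the standard toric criterion for the Gorenstein property. A singular point of a normal toric surface is necessarily a torus-fixed point, hence the distinguished point $v$ of an affine chart $U_\sigma$ with $\sigma$ a two-dimensional cone; and since being Gorenstein near $v$ is a local condition (and $U_\sigma$ is smooth away from $v$), the assertion is equivalent to: $U_\sigma$ is Gorenstein. By Lemma~\ref{cone} and Lemma~\ref{dk-cone} I may take $\sigma^\vee$ to be a $(d,k)$-cone in $M_\R$, equivalently $\sigma$ a $(d,d-k)$-cone in $N_\R$, with $d\ge 2$, $d>k>0$ and $\gcd(d,k)=1$.

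First I would determine when $\Eu(v)=0$. By Corollary~\ref{bi}, writing $\tfrac dk=[b_1,\dots,b_r]^-$ with all $b_i\ge 2$, we have $\Eu(v)=\sum_{i=1}^r(2-b_i)$; as each summand is $\le 0$, this vanishes exactly when $b_i=2$ for every $i$. A one-line induction gives $[2,2,\dots,2]^-=\tfrac{r+1}{r}$ (with $r$ twos), so $\Eu(v)=0$ iff $\tfrac dk=\tfrac{r+1}{r}$ for some $r$, i.e.\ iff $k=d-1$, i.e.\ iff $\sigma$ is a $(d,1)$-cone in $N_\R$ (an $A_{d-1}$ cyclic quotient singularity).

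Next I would identify the Gorenstein locus. Recall the standard fact that a normal (hence Cohen--Macaulay) affine toric variety $U_\sigma$ is Gorenstein iff $K_{U_\sigma}$ is Cartier, which holds iff there is a lattice point $m_\sigma\in M$ with $\langle m_\sigma,u_\rho\rangle=1$ for each primitive ray generator $u_\rho$ of $\sigma$ (see e.g.\ \cite{Cox}); equivalently, one may invoke that the Gorenstein cyclic quotient surface singularities are precisely the $A_n$. Writing $\sigma=\Cone(f_1,\,cf_1+df_2)$ for a basis $\{f_1,f_2\}$ of $N$ (so $\sigma$ is a $(d,c)$-cone in $N$), this condition reads $a=1$ and $ca+db=1$ for $m_\sigma=af_1^\ast+bf_2^\ast$, that is $c\equiv 1\pmod d$; since $0<c<d$ and $d\ge 2$, this forces $c=1$. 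Hence $U_\sigma$ is Gorenstein iff $\sigma$ is a $(d,1)$-cone in $N_\R$.

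Comparing the two computations closes the argument: both $\Eu(v)=0$ and ``$U_\sigma$ Gorenstein'' are equivalent to $\sigma$ being a $(d,1)$-cone in $N_\R$. The one step needing care is bookkeeping the two conventions for $(d,k)$-cones (which side is $M$, which is $N$) consistently across Lemma~\ref{dk-cone}, Corollary~\ref{bi} and the Gorenstein criterion, and quoting the toric Gorenstein criterion correctly; I do not expect a genuine obstacle beyond that.
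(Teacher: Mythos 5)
Your proof is correct and follows essentially the same route as the paper: reduce to an affine chart $U_\sigma$, use Corollary~\ref{bi} to see that $\Eu(v)=0$ forces all $b_i=2$, identify $[2,\dots,2]^-=\tfrac{d}{d-1}$, and match this against the fact that Gorenstein affine toric surfaces are exactly the $(d,1)$-cones. The only (harmless) difference is that you derive the Gorenstein criterion from the Cartier condition on $K_{U_\sigma}$, whereas the paper simply cites \cite[Exc.\ 8.2.13]{Cox}.
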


\begin{proof}

By \cite[Exc. 8.2.13]{Cox} a singular affine toric surface $U_\sigma$ is Gorenstein if and only if $\sigma$ is a $(d,1)$-cone.

Let the singularity be given as a $(d,k)$-cone in $N_{\mathbb{R}}$. Let $\frac {d} {d-k} = [b_1,...,b_r]$. By Corollary \ref{bi} the Euler-obstruction is $0$ if and only if all $b_i=2$. Now if the singularity is Gorenstein, then $k=1$, so $\frac {d} {d-k}=\frac{d}{d-1}$. It is easy to check that the HJ-fraction of $\frac{d} {d-1}$ is a chain of $d-1$ $2$'s.

Conversely if the singularity has Euler-obstruction $0$, then all $b_i$'s are $2$, but by the above this implies that in $M_{\mathbb{R}}$ it is a $(d,d-1)$-cone, so it is a $(d,1)$-cone in $N_{\mathbb{R}}$.
\end{proof}

\begin{remark}
For a surface $X$ the degree of the dual variety given by an embedding by the very ample line bundle $L$ equals the Severi degree $N^{L,1}$.
For a smooth surface one has from \cite{Piene}
\[N^{L,1} = 3 L^2 + 2 L \cdot K_X  +c_2(X),\]
however in the singular case this does not hold. Now fix a toric surface $X_P$. Using Ehrhart theory and Riemann-Roch \cite[Prop. 10.5.6]{Cox} we obtain  that
\[D_P \cdot D_P = \Vol(P), \]
\[-D_P \cdot K_{X_{\Sigma_P}}=E(P). \]
We can combine this with Corollary \ref{degdual} to obtain
\[ N^{D_P,1 }=\deg X_P^\vee = 3D_P^2 + 2D_P \cdot K_{X_P} + \sum_{v} \Eu(v), \]
\[ = 3D_P^2 + 2D_P \cdot K_{X_P} + \sum_{v} \sum_i (2 + E_{v,i}^2), \]
thus $\sum_{v} \Eu(v)$ acts as a sort of ``corrected'' version of $c_2$ for singular surfaces. Indeed, by Remark \ref{ChernMather} $\sum_{v} \Eu(v)$ equals the degree of the second Chern-Mather class $c_2^M(X)$ of the surface. One would have hoped that this correction could work for higher Severi degrees $N^{L,\delta}$, however this seems not to be the case, see for instance \cite{AB}, \cite{Severi}, \cite[Ch. 4]{biun}.
\end{remark}

\subsection{Weighted projective planes}

We wish to apply the results of the previous section to the weigthed projective planes $\p(k,m,n)$ and the $2$-dimensional polytope $P$ defined as the convex hull in $\R^3$ of $v_1=(mn,0,0),v_2=(0,kn,0),v_3=(0,0,km),(0.0,0)$ intersected with the plane $kx+my+nz=kmn$. Denote by $\sigma_i^\vee$ the $2$-dimensional cone generated by the edges of $P$ emanating from $v_i$ (the dual is chosen to remind us that the polytope is in $M$).

\begin{proposition} \label{wpsdeg}
Find  minimal natural numbers $a,b,c$ such that
\begin{align*}
 m+an &\equiv 0 \pmod{k} \\
 n+bk &\equiv 0 \pmod{m} \\
 k+cm &\equiv 0 \pmod{n}
\end{align*}
Then $\sigma_1^\vee$ is a $(k,k-a)$-cone, $\sigma_2^\vee$ is a $(m,m-b)$-cone and $\sigma_3$ is a $(n,n-c)$-cone. 
\end{proposition}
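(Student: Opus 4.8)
The plan is to compute $\sigma_1^\vee=\Cone(v_2-v_1,v_3-v_1)$ explicitly inside the rank-two lattice $\Lambda:=\{x\in\Z^3 : kx_0+mx_1+nx_2=0\}$ — this is $M\cap L$, where $L$ is the linear $2$-plane through the origin parallel to the plane $kx_0+mx_1+nx_2=kmn$ carrying $P-v_1$ — and then read off its type via the recipe in the proof of Lemma~\ref{cone}. Since the weights are reduced, hence (for three weights) pairwise coprime, we have $\delta=kmn$, and the edge vectors $v_2-v_1=(-mn,kn,0)=n(-m,k,0)$ and $v_3-v_1=(-mn,0,km)=m(-n,0,k)$ have primitive generators $p_2=(-m,k,0)$ and $p_3=(-n,0,k)$ respectively. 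Thus $\sigma_1^\vee=\Cone(p_2,p_3)$ in $\Lambda$.

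Following the proof of Lemma~\ref{cone}, I would take $e_1:=p_3$ and complete it to a $\Z$-basis of $\Lambda$ using the coordinate projection $\rho\colon\Lambda\to\Z$, $x\mapsto x_1$. This map is surjective (solve $kx_0+nx_2=-mx_1$, using $\gcd(k,n)=1$) with kernel $\Z\cdot(n,0,-k)=\Z e_1$, so the sequence splits and any $e_2\in\Lambda$ with $\rho(e_2)=1$ gives a basis $\{e_1,e_2\}$. The generator of the other facet, $p_2$, satisfies $\rho(p_2)=k$, so $p_2=c\,e_1+k\,e_2$ for some integer $c$; comparing the first and third coordinates (together with $e_2\in\Lambda$) forces $cn\equiv m\pmod k$, i.e. $c\equiv mn^{-1}\pmod k$. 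In the notation of the proof of Lemma~\ref{cone} the coefficient of $e_2$ here is $k$, so this presents $\sigma_1^\vee$ as a cone of type $(k,\kappa)$, where $\kappa$ is the residue of $c$ modulo $k$ with $0\le\kappa<k$, that is $\kappa\equiv mn^{-1}\pmod k$.

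It then remains to match $\kappa$ with $k-a$. By definition $a$ is the least natural number with $an\equiv-m\pmod k$, i.e. $a\equiv-mn^{-1}\pmod k$; for $k\ge 2$ this gives $1\le a\le k-1$, hence $1\le k-a\le k-1$ and $k-a\equiv mn^{-1}\pmod k$, so $\kappa=k-a$ and $\sigma_1^\vee$ is a $(k,k-a)$-cone (and $\gcd(k,k-a)=\gcd(k,a)=1$, since $a$ is a unit mod $k$). When $k=1$ the cone is nonsingular in the degenerate sense of the remark after Corollary~\ref{bi} and there is nothing to check; likewise for $\sigma_2^\vee,\sigma_3^\vee$ when $m=1$ or $n=1$. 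Finally, the statements for $\sigma_2^\vee$ and $\sigma_3^\vee$ follow from the one for $\sigma_1^\vee$ applied to reordered weights: the coordinate permutation $(x_0,x_1,x_2)\mapsto(x_1,x_2,x_0)$ is a lattice automorphism of $\Z^3$ carrying the polytope of $\p(k,m,n)$ onto that of $\p(m,n,k)$ with $v_1\mapsto v_3'$, $v_2\mapsto v_1'$, $v_3\mapsto v_2'$ (check on the explicit vertices, again using $\delta=kmn$); so $\sigma_2^\vee$ for $\p(k,m,n)$ is $\sigma_1^\vee$ for $\p(m,n,k)$, and the other cyclic permutation makes $\sigma_3^\vee$ for $\p(k,m,n)$ into $\sigma_1^\vee$ for $\p(n,k,m)$, to each of which the case already done applies.

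The main obstacle I anticipate is orientation bookkeeping rather than any substantive difficulty. A singular $2$-dimensional cone is simultaneously a $(d,\kappa)$-cone and a $(d,\kappa')$-cone with $\kappa\kappa'\equiv1\pmod d$, depending on which edge generator is chosen as $e_1$, so the computation must be arranged so that the choice $e_1=p_3$ (and not $e_1=p_2$) is used: the other choice yields the type $(k,(mn^{-1})^{-1}\bmod k)$, which agrees with $(k,k-a)$ only after this inversion. The second point where care is needed is tracking which weight plays which role under the cyclic relabeling in the last step; beyond that the argument is routine.
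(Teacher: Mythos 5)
Your proof is correct and follows essentially the same route as the paper's: both pick the primitive generator of one edge together with a lattice vector at ``height one'' over it (your $e_2$ with $\rho(e_2)=1$ is the paper's $w=v-v_1$ with $v=(d,1,a)$) as a basis, expand the other edge generator as $c\,e_1+k\,e_2$, and read off the type $(k,k-a)$ from the congruence $cn\equiv m\pmod k$. Your extra care about the splitting that makes $\{e_1,e_2\}$ a basis, the orientation ambiguity, and the explicit cyclic permutation for $\sigma_2^\vee,\sigma_3^\vee$ only fills in details the paper leaves implicit.
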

\begin{proof}
We prove this for $\sigma_1^\vee$. $\sigma_1^\vee$ is generated as a cone by the vectors $u_1=(-n,0,k)$ and $u_2=(-m,k,0)$. Picking an $a$ such that $m+an \equiv 0 \pmod{k}$ gives a lattice point of $P$ of the form $v=(d,1,a)$. Then $w=v-v_1$ and $u_1$ is a basis for the lattice spanned by $P$. We have that $u_2 = -a u_1 + k w$, thus $\sigma_1^\vee$ is a $(k,k-a)$-cone.
\end{proof}

\begin{theorem} \label{alg}
Given $\mathbb{P}(k,m,n)$, find  natural numbers $a,b,c$ as in Proposition \ref{wpsdeg}.
Let $\frac {k} {k-a} = [a_1,...,a_t]^-$, $\frac {m} {m-b} = [b_1,...,b_s]^-$, $\frac {n} {n-c} = [c_1,...,c_r]^-$.

Then $\deg \mathbb{P}(k,m,n)^\vee$ equals
\[  3kmn - 2(k+n+m) + \sum_{i=1}^r (2-a_i) + \sum_{i=1}^s (2-b_i) + \sum_{i=1}^t (2-c_i).\]
\end{theorem}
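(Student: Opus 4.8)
The plan is to combine the description of the dual-variety degree of a toric surface from Proposition~\ref{degdual} with the explicit identification of the three maximal cones of the normal fan $\Sigma_P$ from Proposition~\ref{wpsdeg}. First I would recall from Section~5 (the specialization of Proposition~\ref{dualformula} to the surface case, or equivalently Proposition~\ref{degdual}) that
\[ \deg X_P^\vee = 3\Vol(P) - 2E(P) + \sum_{i=1}^{3}\sum_{j}(2 - b_{i,j}), \]
where the $\sigma_i$ are the three maximal cones of $\Sigma_P$, each $\sigma_i^\vee$ is a $(d_i,d_i-k_i)$-cone, and $\frac{d_i}{k_i}=[b_{i,1},\dots,b_{i,s_i}]^-$. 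So the whole statement reduces to computing the three quantities $\Vol(P)$, $E(P)$, and the three continued-fraction data, for the specific polytope $P$ attached to $\p(k,m,n)$.

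Next I would compute $\Vol(P)$. The polytope $P$ is the intersection of the simplex $\Conv(0,v_1,v_2,v_3)$ with the plane $kx+my+nz=kmn$, where $v_i$ are the points $(mn,0,0),(0,kn,0),(0,0,km)$. The normalized (lattice) volume of $P$ with respect to the lattice it spans is the degree of $X_P$ in its embedding by $D_P$, and by the standard computation for weighted projective space this is $D_P^2 = kmn$ (alternatively: the Euclidean area of the triangle with vertices $v_1,v_2,v_3$, divided by the covolume of the lattice induced on the plane, works out to $kmn$ after normalizing; one can also cite $D_P\cdot D_P=\Vol(P)$ from the last remark of Section~5 together with the known self-intersection). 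For $E(P)$: each edge of $P$ runs between two of the $v_i$, and I would check that the number of lattice subdivisions of the edge joining (the images of) $v_i$ and $v_j$ is exactly the third weight; summing the three normalized edge lengths gives $E(P)=k+m+n$. This is essentially the $\p^1$-divisor computation $-D_P\cdot K$, but I would verify it directly on the edges since it is short.

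Finally, and this is where the real content lies, I would invoke Proposition~\ref{wpsdeg}: it says precisely that $\sigma_1^\vee$ is a $(k,k-a)$-cone, $\sigma_2^\vee$ a $(m,m-b)$-cone, and $\sigma_3^\vee$ (equivalently $\sigma_3$ is a $(n,n-c)$-cone, so $\sigma_3^\vee$ is an $(n,n-c)$-cone after the duality of Lemma~\ref{dk-cone}) an $(n,n-c)$-cone, with $a,b,c$ the minimal naturals solving the three congruences. Matching notation with Proposition~\ref{degdual}: the cone $\sigma_1^\vee$ being a $(d_1,d_1-k_1)$-cone with $(d_1,d_1-k_1)=(k,k-a)$ means $d_1=k$, $k_1=a$, so the relevant continued fraction is $\frac{d_1}{k_1}=\frac{k}{a}$. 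But here I must be careful about which fraction is expanded: Proposition~\ref{degdual} writes $\frac{d_i}{k_i}=[b_{i,1},\dots]^-$, whereas the statement to be proved expands $\frac{k}{k-a}=[a_1,\dots,a_t]^-$. So I would reconcile the two: a $(d,k)$-cone has HJ data $\frac{d}{k}$ governing $E^2=-b_i$ and Euler obstruction $\sum(2-b_i)$ by Corollary~\ref{bi}, while the complementary fraction $\frac{d}{d-k}$ governs the resolution combinatorics of Construction~\ref{conres}. Tracking this carefully — using Lemma~\ref{length} to pass between $[\,\cdot\,]^-$ expansions of $\frac{d}{k}$ and $\frac{d}{d-k}$ if needed, and Corollary~\ref{bi} which already packages $\Eu(v)=\sum(2-b_i)$ in terms of the expansion of $\frac{d}{k}$ — shows that $\sum_j(2-b_{1,j}) = \sum_{i=1}^{t}(2-a_i)$ where $\frac{k}{k-a}=[a_1,\dots,a_t]^-$, because Corollary~\ref{bi} with $\frac{d}{k}$ in the role there is stated via the expansion of $\frac{d}{k}$, and for $\sigma_1^\vee$ being a $(k,k-a)$-cone the "$\frac{d}{k}$" of Corollary~\ref{bi} is $\frac{k}{k-a}$. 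Similarly for the other two cones. Substituting $\Vol(P)=kmn$, $E(P)=k+m+n$, and these three sums into the formula of Proposition~\ref{degdual} gives exactly
\[ \deg \p(k,m,n)^\vee = 3kmn - 2(k+m+n) + \sum_{i=1}^{t}(2-a_i) + \sum_{i=1}^{s}(2-b_i) + \sum_{i=1}^{r}(2-c_i). \]

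The main obstacle is bookkeeping: keeping straight which lattice each cone lives in, whether a given cone is of type $(d,k)$ or $(d,d-k)$, and which of $\frac{d}{k}$ or $\frac{d}{d-k}$ is the one whose HJ expansion appears in the final sum. None of the individual steps is deep — $\Vol(P)$ and $E(P)$ are routine lattice-polytope computations and Proposition~\ref{wpsdeg} has already done the hard cone-identification — but an index or a $k \leftrightarrow d-k$ slip anywhere would silently give the wrong continued fractions, so the proof is essentially an exercise in carefully chaining Propositions~\ref{wpsdeg}, \ref{degdual} and Corollary~\ref{bi}.
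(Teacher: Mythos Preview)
Your proposal is correct and is exactly the argument the paper has in mind: Theorem~\ref{alg} is stated without a separate proof because it follows immediately from Proposition~\ref{degdual} applied with the cone identifications of Proposition~\ref{wpsdeg}, once one checks $\Vol(P)=kmn$ and $E(P)=k+m+n$. Your convention-tracking is right (since $\sigma_1^\vee$ is a $(k,k-a)$-cone in $M_\R$, Corollary~\ref{bi} uses $\frac{k}{k-a}$), though you make the bookkeeping look more delicate than it is.
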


Using Theorem \ref{alg} it is easier to find closed formulas in special cases.

\begin{corollary} \label{2k}
For $k \geq 1$,  $ \degree \mathbb{P}(2k-1,2k,2k+1)^\vee=24k^3-20k+3$.
\end{corollary}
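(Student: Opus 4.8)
The plan is to apply Theorem~\ref{alg} directly with $(k,m,n)=(2k-1,2k,2k+1)$ (renaming the ambient triple to avoid clashing with the running index, say $p=2k-1$, $q=2k$, $r'=2k+1$), so that $pqr' = (2k-1)(2k)(2k+1) = 2k(4k^2-1) = 8k^3-2k$, giving the leading term $3pqr' = 24k^3-6k$ and the linear term $-2(p+q+r') = -2(6k) = -12k$. It remains to compute the three continued-fraction correction sums. First I would determine the minimal natural numbers $a,b,c$ of Proposition~\ref{wpsdeg}: solve $q + a r' \equiv 0 \pmod p$, i.e. $2k + a(2k+1)\equiv 0\pmod{2k-1}$; reducing mod $2k-1$ gives $1 + 2a\equiv 0\pmod{2k-1}$, so $a = k-1$ works and is minimal. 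Similarly $r' + b p \equiv 0 \pmod q$ becomes $2k+1 + b(2k-1)\equiv 0 \pmod{2k}$, i.e. $1 - b \equiv 0 \pmod{2k}$, so $b = 1$. Finally $p + c q \equiv 0 \pmod{r'}$ becomes $2k-1 + c(2k)\equiv 0\pmod{2k+1}$, i.e. $-2 + c(-1)\equiv 0 \pmod{2k+1}$... wait, $2k\equiv -1$ and $2k-1\equiv -2$, so $-2 - c \equiv 0 \pmod{2k+1}$, giving $c = 2k-1$.

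Next I would identify the three HJ-fractions $\frac{p}{p-a}$, $\frac{q}{q-b}$, $\frac{r'}{r'-c}$ and evaluate $\sum (2-x_i)$ for each. For the easy ones: $\frac{q}{q-b} = \frac{2k}{2k-1}$, whose HJ expansion is a chain of $2k-1$ twos (as noted in the Gorenstein corollary above), so its contribution $\sum_i(2-b_i) = 0$. For $\frac{r'}{r'-c} = \frac{2k+1}{2}$ we have $\frac{2k+1}{2} = (k+1) - \frac{1}{2}$, i.e. $[k+1,2]^-$, contributing $(2-(k+1)) + (2-2) = 1-k$. For $\frac{p}{p-a} = \frac{2k-1}{k}$: since $\gcd(2k-1,k)=1$ and $\frac{2k-1}{k} = 2 - \frac{1}{k} = \ldots$ one checks $\frac{2k-1}{k}=[2,2,\dots,2,3]^-$ or similar; rather than guess I would invoke Lemma~\ref{length} with the reciprocal fraction. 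Concretely, Corollary~\ref{bi} tells us the relevant correction is governed by $\Eu$ of the corresponding vertex, and by Lemma~\ref{euob}/\ref{length} we can compute $\sum_i(2-a_i)$ from the length of the HJ-expansion of the complementary fraction $\frac{p}{a}$ or $\frac{p}{p-a}$. The cleanest route is: for a $(d,k_0)$-cone, $\sum_i(2-b_i) = 1 - r$ where $r$ is the number of terms in the HJ-expansion of $\frac{d}{d-k_0}$ (this is exactly $\Eu(v)$ by Corollary~\ref{bi} combined with Lemma~\ref{length}); so I only need the \emph{lengths} of the three expansions. For $\frac{2k}{2k-1}$ the length is $2k-1$ (contribution $1-(2k-1) = 2-2k$); for $\frac{2k+1}{2k-1}$ — note $c=2k-1$ so $r'-c = 2$, and $\frac{2k+1}{2}$ has length $2$ (contribution $-1$); for $p$: $a = k-1$, $p - a = k$, and $\frac{2k-1}{k}$ has length... $\frac{2k-1}{k} = 2 - \frac{1}{k/(k-1)}$ hmm, this needs the expansion of $\frac{k}{k-1}$, a chain of $k-1$ twos, so $\frac{2k-1}{k}=[2,\underbrace{2,\dots,2}_{k-1}]^-$ has length $k$, contribution $1-k$.

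Assembling: $\deg = (24k^3-6k) - 12k + (1-k) + (2-2k) + (-1) = 24k^3 - 6k - 12k - k - 2k + 1 + 2 - 1 = 24k^3 - 21k + 2$. This does not match the claimed $24k^3-20k+3$, so I expect the main obstacle to be getting the three continued-fraction lengths and the assignment of which cone is which exactly right — in particular being careful about the $(d,k)$ versus $(d,d-k)$ convention (Lemma~\ref{dk-cone}) and about whether the minimal $a,b,c$ I found are genuinely minimal and yield the stated cone types via Proposition~\ref{wpsdeg}. I would recompute $a$ (for $p=2k-1$: is $a=k-1$ or could a smaller value work? $1+2a\equiv 0\pmod{2k-1}$ forces $2a \equiv 2k-2$, i.e. $a\equiv k-1\pmod{2k-1}$ since $\gcd(2,2k-1)=1$, so $a=k-1$ is indeed minimal), double-check $b$ and $c$ the same way, then recompute each $\frac{d}{d-k_0}$ and its HJ-length, and finally re-sum; the verification that the arithmetic collapses to precisely $24k^3-20k+3$ is the routine-but-error-prone endgame, and I would present it as a short explicit computation once the three lengths are pinned down.
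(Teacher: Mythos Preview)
Your approach is the same as the paper's: apply Theorem~\ref{alg}, determine $a,b,c$, expand the three HJ-fractions, and sum. Your values $a=k-1$, $b=1$, $c=2k-1$ are correct, and so are the expansions $\frac{2k}{2k-1}=[\underbrace{2,\dots,2}_{2k-1}]^-$ (contribution $0$) and $\frac{2k+1}{2}=[k+1,2]^-$ (contribution $1-k$).

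The concrete error is in the remaining fraction. You claim $\frac{2k-1}{k}$ is a chain of $k$ twos, but $[\underbrace{2,\dots,2}_{k}]^- = \frac{k+1}{k}$, not $\frac{2k-1}{k}$. In fact
\[
\frac{2k-1}{k} \;=\; 2 - \frac{1}{k} \;=\; [2,k]^-,
\]
a two-term expansion with contribution $(2-2)+(2-k)=2-k$. With this correction the total is
\[
(24k^3-6k) \;-\; 12k \;+\; (2-k) + 0 + (1-k) \;=\; 24k^3 - 20k + 3.
\]

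Your detour through Lemma~\ref{length} also misfires: the identity $\sum_i(2-b_i)=1-r$ holds when $r$ is the length of the \emph{complementary} expansion. For the weight-$2k$ vertex that complementary fraction is $\frac{2k}{b}=\frac{2k}{1}=[2k]^-$, of length $1$, giving contribution $1-1=0$ (agreeing with your first, correct, computation), not $2-2k$.
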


\begin{proof}
We wish to find minimal $a,b,c$ satisfying
\begin{align*}
 2k + a(2k+1) &\equiv 0 \pmod{2k-1}, \\
 2k+1 + b(2k-1) &\equiv 0 \pmod{2k}, \\
 2k-1 + c2k &\equiv 0 \pmod{2k+1} .
\end{align*}
Some easy algebra shows that $a,b,c$ must satisfy
\begin{align*}
 2a &\equiv -1 \pmod{2k-1}, \\
 b &\equiv 1 \pmod{2k}, \\
 c &\equiv -2 \pmod{2k+1} .
\end{align*}
Resulting in $a=k-1, b=1, c=2k-1$. Now 
\begin{align*}
 \frac {2k-1} {2k-1 - (k-1)} &= \frac{2k-1} {k} = [2,k]^- , \\
 \frac {2k} {2k-1} &= [2,...,2]^- ,\\
 \frac {2k+1} {2k+1-(2k-1)} &= \frac {2k+1} {2} = [k+1,2]^- .
\end{align*}
Combining these yields the formula.
\end{proof}

\begin{corollary} \label{fibo}
$\degree \mathbb{P}(m,n,m+n)^\vee = 3mn(m+n) -5(m+n) +4$ .
\end{corollary}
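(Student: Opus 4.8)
The plan is to apply Theorem \ref{alg} to the weighted projective plane $\mathbb{P}(m,n,m+n)$, so the first task is to compute the auxiliary integers $a,b,c$ from Proposition \ref{wpsdeg}. Writing $k=m$, the ``$m$'' of the theorem as $n$, and the ``$n$'' of the theorem as $m+n$, I need the minimal natural numbers solving $n + a(m+n) \equiv 0 \pmod m$, $(m+n) + bm \equiv 0 \pmod n$, and $m + c\,n \equiv 0 \pmod{m+n}$. The first congruence simplifies: modulo $m$ we have $m+n \equiv n$, so it becomes $n(1+a)\equiv 0 \pmod m$; assuming the weights are reduced (so $\gcd(m,n)=1$, which we may assume by the reduction discussed in Section 4, and note $\gcd(m,m+n)=\gcd(n,m+n)=\gcd(m,n)$, so all pairwise gcds equal $\gcd(m,n)$ and reducedness forces this to be $1$), this gives $a \equiv -1 \pmod m$, hence $a = m-1$. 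Similarly the second congruence reads $m \equiv 0 \pmod n$ after reducing $m+n\equiv m$ and $bm \equiv 0$; wait — more carefully, modulo $n$ we get $(m+n)+bm \equiv m + bm = m(1+b)$, so $b \equiv -1 \pmod n$, giving $b = n-1$. For the third, modulo $m+n$ we have $n \equiv -m$, so $m + c n \equiv m - cm = m(1-c)$, forcing $c \equiv 1 \pmod{m+n}$, hence $c = 1$.

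Next I feed these into the continued-fraction formula of Theorem \ref{alg}. With $a=m-1$ the relevant fraction is $\tfrac{m}{m-a} = \tfrac{m}{1} = m = [m]^-$, a single term, contributing $(2-m)$. With $b=n-1$ we get $\tfrac{n}{n-b}=\tfrac{n}{1}=n=[n]^-$, contributing $(2-n)$. With $c=1$ we get $\tfrac{m+n}{(m+n)-c} = \tfrac{m+n}{m+n-1}$, and I will use the standard fact (invoked already in the Gorenstein corollary above) that the Hirzebruch--Jung expansion of $\tfrac{d}{d-1}$ is the chain $[2,2,\dots,2]^-$ of length $d-1$; here $d = m+n$, so this is a chain of $m+n-1$ twos, each contributing $(2-2)=0$ to the sum. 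Then Theorem \ref{alg} gives
\[
\deg \mathbb{P}(m,n,m+n)^\vee = 3\,m\,n\,(m+n) - 2\big(m + n + (m+n)\big) + (2-m) + (2-n) + 0.
\]
Simplifying, $-2(m+n+(m+n)) = -4(m+n)$ and $(2-m)+(2-n) = 4-(m+n)$, so the total non-leading part is $-4(m+n) + 4 - (m+n) = -5(m+n)+4$, yielding $3mn(m+n) - 5(m+n) + 4$ as claimed.

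The only genuine subtlety — the ``hard part'' — is the bookkeeping around which weight plays the role of $k$, $m$, $n$ in Proposition \ref{wpsdeg} and Theorem \ref{alg}, since the three congruences are cyclic but not symmetric, and one must be careful that $a,b,c$ are indeed the \emph{minimal} natural-number solutions (in particular that none of them is $0$, which would force a degenerate, i.e. smooth, cone handled by the $[1]^-$ convention; here $m-1, n-1 \geq 0$ and for $m=1$ or $n=1$ the corresponding weight is trivial and the cone is smooth, so the formula still reads correctly with the convention). One should also double-check the reducedness hypothesis: the statement as given has no coprimality assumption on $m,n$, so strictly the identity holds for $\gcd(m,n)=1$; if $\gcd(m,n)=g>1$ then $\mathbb{P}(m,n,m+n)\cong \mathbb{P}(m/g, n/g, (m+n)/g)$ only when the weights reduce compatibly, and the stated formula is really intended for the reduced case. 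Modulo this caveat, the computation is entirely routine once the three congruences are correctly set up.
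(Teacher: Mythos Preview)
Your proof is correct and follows exactly the intended approach: the paper leaves this corollary unproved, but the method is the same as for Corollaries \ref{2k} and the $\mathbb{P}(m,n,m+2n)$ case, namely a direct application of Theorem \ref{alg} after solving the three congruences of Proposition \ref{wpsdeg}. Your solutions $a=m-1$, $b=n-1$, $c=1$ and the resulting continued fractions $[m]^-$, $[n]^-$, $[2,\dots,2]^-$ are all correct, and the arithmetic at the end checks out.
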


\begin{corollary}
For odd $m>1$, 
\[ \degree \mathbb{P}(m-2,m,m+2)^\vee = 3m^3 -19m+3 .\]
\end{corollary}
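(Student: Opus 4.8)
The plan is to apply Theorem~\ref{alg} to the weighted projective plane $\p(m-2,m,m+2)$ with $m$ odd, $m>1$, so that $\gcd(m-2,m)=\gcd(m,m+2)=\gcd(m-2,m+2)=1$ (using that $m$ is odd, so $m\pm 2$ is odd and any common factor of $m-2$ and $m+2$ divides $4$, hence is $1$). First I would set $(k,m,n)=(m-2,m,m+2)$ in the notation of Proposition~\ref{wpsdeg} and solve the three congruences
\begin{align*}
 m + a(m+2) &\equiv 0 \pmod{m-2},\\
 m+2 + b(m-2) &\equiv 0 \pmod{m},\\
 m-2 + cm &\equiv 0 \pmod{m+2},
\end{align*}
for the minimal natural numbers $a,b,c$. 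Reducing modulo the respective moduli (so $m+2\equiv 4$, $m-2\equiv -2$, $m\equiv -2$) these become $4a\equiv -2\pmod{m-2}$, $-2b\equiv -2\pmod m$, $-2c\equiv 2\pmod{m+2}$, i.e.\ $2a\equiv -1\pmod{(m-2)/\gcd(2,m-2)}$ type relations; since $m$ is odd all three moduli are odd, so $2$ is invertible and one reads off $b\equiv 1\pmod m$, hence $b=1$, and $a,c$ as explicit residues. I expect $a=(m-3)/2$ (so that $2a=m-3\equiv -1\pmod{m-2}$) and $c=(m+1)/2$ (so that $2c=m+1\equiv -1\pmod{m+2}$, giving $-2c\equiv 1$; one may need to recheck the sign and take $c=(m-3)/2$ or similar — this bookkeeping is where I would be most careful).

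Next I would compute the three Hirzebruch--Jung continued fractions appearing in Theorem~\ref{alg}: $\tfrac{k}{k-a}=\tfrac{m-2}{(m-2)-a}$, $\tfrac{m}{m-b}=\tfrac{m}{m-1}$, and $\tfrac{n}{n-c}=\tfrac{m+2}{(m+2)-c}$. The middle one is immediate: $\tfrac{m}{m-1}=[2,2,\dots,2]^-$ with $m-1$ twos, contributing $\sum(2-2)=0$ to the degree formula, exactly as in the proof of Corollary~\ref{2k}. For the outer two, with the expected values $a=(m-3)/2$ and the matching $c$, the fractions should simplify to something like $\tfrac{m-2}{(m+1)/2}$ and $\tfrac{m+2}{(m-1)/2}$; these are fractions of the form $\tfrac{N}{(N\pm3)/2}$ whose HJ expansions I would compute either directly by the Euclidean-type algorithm underlying $[\,\cdot\,]^-$ or by invoking Lemma~\ref{length} to control the length. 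I anticipate short expansions of the shape $[3,2,\dots,2,\ell]^-$ or $[\,\ell,2,\dots,2,3]^-$ analogous to what appears in Corollary~\ref{2k}; in either case $\sum_i(2-x_i)$ telescopes to a small explicit affine function of $m$.

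Finally I would assemble the pieces into Theorem~\ref{alg}'s formula
\[ \deg\p(m-2,m,m+2)^\vee = 3(m-2)m(m+2) - 2\big((m-2)+m+(m+2)\big) + \Sigma_1 + 0 + \Sigma_3, \]
where $\Sigma_1,\Sigma_3$ are the two telescoped sums. Here $3(m-2)m(m+2) = 3m(m^2-4) = 3m^3-12m$ and $-2(3m) = -6m$, so the polynomial part is $3m^3-18m$, and the target $3m^3-19m+3$ forces $\Sigma_1+\Sigma_3 = -m+3$. So the concrete check reduces to verifying that the two HJ-correction sums total exactly $3-m$; I would confirm this against the two boundary cones, cross-checking with the already-proved Corollary~\ref{fibo} and Corollary~\ref{2k} as sanity tests (e.g.\ $\p(m-2,m,m+2)$ overlaps neither family but the method is identical).

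The main obstacle I expect is purely the arithmetic bookkeeping: getting the minimal $a,b,c$ right (including the correct residues and confirming minimality), and then computing the two nontrivial HJ continued fractions $\tfrac{m-2}{(m-2)-a}$ and $\tfrac{m+2}{(m+2)-c}$ correctly as functions of $m$ — in particular making sure the pattern of $2$'s and the two ``large'' entries are placed correctly so that the telescoping sum comes out to $3-m$ and not, say, $3-m$ with an off-by-one. Once those continued fractions are pinned down, the rest is substitution into Theorem~\ref{alg} and elementary polynomial algebra.
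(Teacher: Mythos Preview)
Your approach is exactly the paper's (implicit) method via Theorem~\ref{alg}, and your reduction to checking $\Sigma_1+\Sigma_3=3-m$ is correct. For the record, the value you were unsure of is $c=m$ (since $-4-2c\equiv 0\pmod{m+2}$ gives $c\equiv -2$), yielding $\tfrac{m+2}{2}=[\tfrac{m+3}{2},2]^-$ and $\tfrac{m-2}{(m-1)/2}=[2,\tfrac{m-1}{2}]^-$, whose contributions $\tfrac{1-m}{2}+\tfrac{5-m}{2}=3-m$ complete the computation.
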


\begin{corollary}
$\degree \mathbb{P}(m,n,m+2n)^\vee = 6mn^2+3m^2n-7n- \frac {9} {2} m + \frac {5} {2}$.
\end{corollary}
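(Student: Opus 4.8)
The plan is to feed the triple $(m,n,m+2n)$ into Theorem~\ref{alg}, letting these three weights play the roles of $k,m,n$ there. First I would fix the standing hypotheses that make the statement sensible: $\gcd(m,n)=1$ and $m$ odd. Indeed, if $m$ is even then $\gcd(m,m+2n)=\gcd(m,2n)>1$ and the weights fail to be reduced, and moreover $m$ odd is exactly what is needed for the claimed expression to lie in $\Z$ (the term $-\tfrac{9}{2} m+\tfrac{5}{2}$ is integral iff $m$ is odd). Under these hypotheses Theorem~\ref{alg} reduces the problem to
\[ \degree \mathbb{P}(m,n,m+2n)^\vee = 3mn(m+2n) - 2(2m+3n) + \Eu(v_1)+\Eu(v_2)+\Eu(v_3), \]
so it remains only to compute the three vertex Euler obstructions via Proposition~\ref{wpsdeg} and Corollary~\ref{bi}.

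Next I would solve the congruences of Proposition~\ref{wpsdeg} for the minimal natural numbers $a,b,c$. Using $\gcd(m,n)=1$, the first congruence simplifies to $2a\equiv-1\pmod m$, whence $a=\tfrac{m-1}{2}$ (here $m$ odd is used); the second to $b\equiv-1\pmod n$, whence $b=n-1$; and the third, after reducing $m\equiv-2n\pmod{m+2n}$, to $(c-2)n\equiv 0\pmod{m+2n}$, which since $\gcd(n,m+2n)=\gcd(n,m)=1$ forces $c=2$ (using $m+2n>2$). Then I would read off the Hirzebruch--Jung expansions at $v_1$ and $v_2$: one has $\tfrac{m}{m-a}=\tfrac{2m}{m+1}=[2,\tfrac{m+1}{2}]^-$ (the second entry is integral precisely because $m$ is odd), so $\Eu(v_1)=(2-2)+(2-\tfrac{m+1}{2})=\tfrac{3-m}{2}$; and $\tfrac{n}{n-b}=\tfrac{n}{1}=[n]^-$, so $\Eu(v_2)=2-n$.

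The delicate point is $\Eu(v_3)$, where $\tfrac{m+2n}{m+2n-c}=\tfrac{m+2n}{m+2n-2}$ has no convenient closed-form expansion; here I would avoid computing its entries. By Proposition~\ref{wpsdeg} the cone $\sigma_3^\vee$ is a $(m+2n,\,m+2n-2)$-cone, so by Corollary~\ref{bi} (more precisely, the argument in its proof, via Lemma~\ref{length}) one gets $\Eu(v_3)=1-r$ where $r$ is the length of the HJ-expansion of the dual fraction $\tfrac{m+2n}{2}$; since $m+2n$ is odd, $\tfrac{m+2n}{2}=[\tfrac{m+2n+1}{2},\,2]^-$, hence $r=2$ and $\Eu(v_3)=-1$. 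Substituting $\Eu(v_1)+\Eu(v_2)+\Eu(v_3)=\tfrac{3-m}{2}+(2-n)-1=\tfrac{5-m}{2}-n$ into the displayed identity and expanding $3mn(m+2n)-4m-6n+\tfrac{5-m}{2}-n$ gives $6mn^2+3m^2n-7n-\tfrac{9}{2} m+\tfrac{5}{2}$, as claimed. The main obstacle is thus the $v_3$ term; the rest is routine, though one should also verify the degenerate cases $m=1$ or $n=1$ (where $\tfrac{m}{m-a}$ or $\tfrac{n}{n-b}$ collapses to $[1]^-$), which likewise yield the stated numbers.
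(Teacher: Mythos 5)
Your proof is correct and follows the paper's own route: apply Theorem \ref{alg}, solve the three congruences to get $a=\frac{m-1}{2}$, $b=n-1$, $c=2$ (with $m$ odd forced by reducedness of the weights), and read off the same Hirzebruch--Jung expansions $[2,\frac{m+1}{2}]^-$ and $[n]^-$ at $v_1$ and $v_2$. The only (harmless) variation is at $v_3$: the paper exhibits $\frac{m+2n}{m+2n-2}=[2,\dots,2,3]^-$ directly and sums $(2-c_i)$, whereas you obtain the same contribution $-1$ via Lemma \ref{length} applied to the dual fraction $\frac{m+2n}{2}=[\frac{m+2n+1}{2},2]^-$.
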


\begin{proof}
Following Theorem \ref{alg} we want minimal $a,b,c$ such that
\begin{align*}
 n+a(m+2n) \equiv 0 &\pmod{m} ,\\
 mb + m +2n \equiv 0 &\pmod{n} ,\\
 m+cn \equiv 0 &\pmod{m+2n} .
\end{align*}
One sees that $a=\frac {m-1} {2}, b=n-1, c=2$ ($m$ has to be odd, if not then gcd$(m,m+2n) \neq 1$). Now $\frac {m+2n} {m+2n-2} = 2 - \frac {m+2n -4} {m+2n-2} = 2 - \frac {1} {\frac {m+2n-2} {m+2n-4} } = [2,...,2,3]^-$ where the $3$ is by induction, since $\frac {3} {1} = [3]^-$. The Hirzebruch--Jung fraction $\frac {n} {n-(n-1)} = \frac {n} {1} = [n]^-$. Also $\frac {m} {m- \frac {m-1} {2} } = \frac {m} {\frac {m+1} {2}} = [2,\frac {m+1} {2}]^-$. Combining these yields the formula.
\end{proof}

\begin{example}
For sufficiently small examples, these calculations can be doublechecked using Macaulay2\cite{M2}. According to Corollary \ref{fibo} $\degree \p(1,2,3)^\vee = 7$. The lattice points of the polytope defining $\p(1,2,3)$ corresponds to monomials $1,s,s^2,s^3,t,st,t^2$. We run the following code:
\begin{verbatim}
R = ZZ/101[s,t,y1,y2,y3,y4,y5,y6,y7];
f=y1+y2*s+y3*s^2+y4*s^3+y5*t+y6*s*t+y7*t^2;
I=ideal{f,diff(s,f),diff(t,f)};
I =saturate(I,ideal{s*t});
J=eliminate(I,s);
K=eliminate(J,t);
degree K
\end{verbatim}
This outputs the correct answer $7$.
\end{example}

\section{3-folds}

Here we let $A$ be the lattice points of a $3$-dimensional lattice polytope $P$. We have from Proposition \ref{dualformula}:
\[ \deg X_{P \cap M}^\vee = 4\Vol(P) - 3 \sum_{f \lneqq P} \Eu(f) \Vol(f) + 2\sum_{e \lneqq P} \Eu(e)\Vol(e) - \sum_{v \in P} \Eu(v), \]
where $\{ f \}$ is the collection of all facets of $P$, $\{ e \}$ the is collection of all edges of $P$, and the last sum is over all vertices $v$ of $P$.

Again we recall Lemma \ref{index1} saying that for any two faces $\Delta_\alpha \preceq \Delta_\beta \preceq P$ we have that $i(\Delta_\alpha,\Delta_\beta)=1$. Combining this with Corollary \ref{cod}   we see that $\Eu(f) = 1$ for any facet $f$ of $P$.

For an edge $e$ of $P$ we have by \ref{euler} that 
\[ \Eu(e) = - \RSV_\Z(P,e) \Eu(P) + \sum_{e \preceq f, \dim f = 2} \RSV_\Z(f,e) \Eu(f_i) = - \RSV_\Z(P,e) + f_e , \]
where $f_e$ is the number of facets of $P$ containing $e$.

By unraveling the definition of $\RSV_\Z$ we see that the term $\RSV_\Z(P,e)$ is nothing but $\Vol (\ol{P} \setminus\Conv((\ol{P} \setminus \ol{e}) \cap \ol{M}))$, where $\ol{M}$ is the quotient $M/\Z e$ and  $\ol{P}$, $\ol{e}$ are the images of $P$ and $e$ in $\ol{M}$ (Note that $\ol{e}$ is the origin of $\ol{M}$ and will be a vertex of $\ol{P}$).  But this we can calculate: Write the $2$-dimensional cone generated by $\ol{P}$ with apex $\ol{e}$ as a $(d,k)$-cone and write $\frac{d}{k} = [b_1,...,b_r]^-$. Then $\RSV(P,e) = 2 + \sum_{i=1}^r (b_i-2)$ by the arguments in the surface case. Summing up we get
\[ \Eu(e) = f_e -2 + \sum_{i=1}^r (2-b_i). \]
For a vertex $v$ of $P$ we have
\[ \Eu(v) = \Eu(P)\RSV_\Z(P,v) - \sum_{i} \Eu(f_i)\RSV_\Z(f_i,v) + \sum_{j} \Eu(e_j)\RSV_\Z(e_j,v) \]
\[ =  \RSV_\Z(P,v) - \sum_{v \preceq f, \dim f = 2} \RSV_\Z(f,v) + \sum_{v \preceq e, \dim e = 1} \Eu(e) .\]
Calculating most of these terms are easy, $\Eu(e)$ we did above, while similarly to before $\RSV_\Z(f,v) = 2 + \sum_{i=1}^s (c_i-2 )$, where the cone spanned by $f$ with apex $v$ is a $(d,k)$-cone with $\frac{d}{k} = [c_1,...,c_s]^-$. The remaining term $\RSV_\Z(P,v)$, however, is problematic, we need to compute the $3$-dimensional $\Vol(P \setminus\Conv((P \setminus v) \cap M))$. There seems to be no known general method for doing this. However for sufficiently small polytopes, computer programs cabable of calculating convex hulls and volumes can do this, for instance Macaulay2.  Collecting the above we get:

\begin{algorithm} \label{3foldalg}
To calculate the degree of the dual variety of a toric $3$-fold $X_{P \cap M}$, do the following:
\begin{enumerate}
\item Calculate the volume $V$ of $P$.
\item Calculate the sum of the areas of facets of $P$, denoted $A$.
\item For each edge $e$ calculate the length of $e$, denoted $L(e)$.
\item For each edge $e$, let $\sigma_e$ be the cone generated by $P$ with apex $e$ in $M/e \Z$. Write $\sigma_e$ as a $(d,k)$-cone, and write $\frac{d}{k} = [b_1,...,b_r]^-$. Then $\Eu(e)=f_e -2 + \sum_{i=1}^r (2-b_i)$.
\item For each vertex calculate $\RSV_\Z(P,v)$.
\item For each pair consisting of a vertex $v$ and a facet $f$ containing it, write the cone generated by edges of $f$ emanating from $v$ as a $(d_f,k_f)$-cone and write $\frac{d_f}{k_f} = [c_{f,1},...,c_{f,s}]^-$. Then $\RSV_\Z(f,v)=2 + \sum_{i=1}^s (c_{f,i}-2 )$.
\item For each vertex $v$ calculate 
\[ \Eu(v) = \RSV_\Z(P,v) - \sum_f [2 + \sum_{i=1}^s (c_{f,i}-2 )] + \sum_e \Eu(e), \]
 where the sums are over faces containing $v$.
\end{enumerate}
Then $\deg X_P^\vee = 4V -3A +2 \sum_e \Eu(e) L(e) - \sum_v \Eu(v)$.
\end{algorithm}

\subsection{Weighted projective 3-folds} \label{poly}

We will compute the local Euler obstruction and dual degree for weighted projective spaces of the form $\p(1,k,m,n)$. We may assume $\gcd(k,m,n) = 1$.

Set $d = \lcm(k,m,n)$, and let $P$ be the convex hull in $M_\R$ of $v_0=(0,0,0),v_1=(\frac{d}{k},0,0),v_2=(0,\frac{d}{m},0),v_3=(0,0,\frac{d}{n})$. Then $X_P \simeq \p(1,k,m,n)$.

Since every cone containing $v_0$ is smooth, we only need to calculate for faces containing  $v_1,v_2,v_3$. Thus we will do this for $v_1$, the rest is obtained by cyclic permutation.

Denoting $\gcd(a,b)$ by $(a,b)$, the primitive vectors emanating from $v_1$ are
\[ e_1= \begin{pmatrix} -1 \\ 0 \\ 0 \end{pmatrix}, e_2=\begin{pmatrix} -\frac{n}{(n,k)} \\ 0 \\ \frac{k}{(n,k)} \end{pmatrix}, e_3=\begin{pmatrix} -\frac{m}{(m,k)} \\ \frac{k}{(m,k)} \\ 0 \end{pmatrix} . \]
Let $f_1=\Cone (e_2,e_3),f_2=\Cone (e_1,e_3),f_3=\Cone (e_1,e_2) $.

Then $f_2$ is a $(\frac{k}{(n,k)},n')$-cone where $n' \equiv \frac{n}{(n,k)} \pmod{\frac{k}{(n,k)}}$.

$f_3$ is a  $(\frac{k}{(m,k)},m')$-cone where $m' \equiv \frac{m}{(m,k)} \pmod{\frac{k}{(m,k)}}$.

For $f_1$ we first need to choose a basis for the lattice containing $f_1$:
\begin{lemma} \label{basisFacet}
Pick $a,c$ such that $ak+cn = -m(n,k)$.
Then the vectors
\[  w= \begin{pmatrix} a \\ (n,k) \\ c \end{pmatrix}, \hspace{6 pt} e_2 = \begin{pmatrix} -\frac{n}{(n,k)} \\ 0 \\ \frac{k}{(n,k)} \end{pmatrix}, \]
are a basis for the lattice $M_{f_1}$.
\end{lemma}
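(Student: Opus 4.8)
The plan is to pin down the lattice $M_{f_1}$ explicitly, verify that $w$ and $e_2$ lie in it, and then recognise that they form a basis by a short cross-product computation.

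First I would identify $M_{f_1}$. The cone of $P$ at $v_0$ is the standard unimodular orthant (the edges out of $v_0$ pass through $(1,0,0),(0,1,0),(0,0,1)$), so $M_P=\Z^3$. The face of $P$ spanned by $e_2,e_3$ is the facet $\Conv(v_1,v_2,v_3)$, which lies in the hyperplane $\{kx+my+nz=d\}$ since $k\cdot\tfrac dk=m\cdot\tfrac dm=n\cdot\tfrac dn=d$; equivalently, a direct check gives $e_2,e_3\in\{kx+my+nz=0\}$, so $L(f_1)=\{kx+my+nz=0\}$. Because $\dim P=3$, Lemma~\ref{index1} yields $i(P,f_1)=1$, hence $M_{f_1}=M_P\cap L(f_1)=\{\,p\in\Z^3 : kp_1+mp_2+np_3=0\,\}$.

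Next, membership is immediate: $e_2\in M_{f_1}$ is the identity $k\cdot(-\tfrac{n}{(n,k)})+n\cdot\tfrac{k}{(n,k)}=0$, and $w\in M_{f_1}$ is $ka+m(n,k)+nc=(ak+cn)+m(n,k)=0$, using the defining relation $ak+cn=-m(n,k)$ (such $a,c$ exist since $(n,k)=\gcd(k,n)$ divides $-m(n,k)$). To conclude that $\{w,e_2\}$ is a $\Z$-basis I would use the following standard criterion. Put $\nu=(k,m,n)$; it is primitive because $\gcd(k,m,n)=1$, and $M_{f_1}=\nu^{\perp}\cap\Z^3$ is a \emph{saturated} rank-$2$ sublattice of $\Z^3$, hence a direct summand. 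Therefore two linearly independent vectors of $M_{f_1}$ form a basis of $M_{f_1}$ iff they extend to a basis of $\Z^3$, i.e. iff there is $u\in\Z^3$ with $\det(w,e_2,u)=(w\times e_2)\cdot u=\pm1$, i.e. iff $w\times e_2$ is primitive (it is automatically an integer multiple of $\nu$, so this forces $w\times e_2=\pm\nu$). A one-line computation of the cross product, the middle coordinate simplifying through $ak+cn=-m(n,k)$, gives $w\times e_2=(k,m,n)=\nu$, which is primitive; in particular $w,e_2$ are independent, and they form a basis of $M_{f_1}$.

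The only step here that is more than bookkeeping is the identification of $M_{f_1}$ in the first paragraph: the content is that the lattice \emph{generated by the lattice points of the facet} is the whole saturated lattice $\nu^{\perp}\cap\Z^3$, and that is precisely where $M_P=\Z^3$ and the index-one statement of Lemma~\ref{index1} enter; everything afterwards is routine integer arithmetic. If one wished to bypass the cross-product criterion, the same conclusion follows from a direct divisibility chase: for $p\in M_{f_1}$ one has $(n,k)\mid p_2$ (since $(n,k)\mid mp_2=-kp_1-np_3$ while $\gcd((n,k),m)\mid\gcd(k,m,n)=1$), and then $p-\tfrac{p_2}{(n,k)}w\in M_{f_1}$ has vanishing middle coordinate, forcing it to be an integer multiple of $e_2$ because $\gcd\big(\tfrac{k}{(n,k)},\tfrac{n}{(n,k)}\big)=1$.
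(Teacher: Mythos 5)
Your proof is correct, and its main argument takes a genuinely different route from the paper's. The paper asserts without proof that $M_{f_1}$ is the set of integer points of the hyperplane $kx+my+nz=d$ (in its affine normalization; note $w$ actually satisfies the translated equation $kx+my+nz=0$), checks that $w$ lies in it, and then applies its Lemma~\ref{basis}: if $sw+te_2$ with $0\le s,t<1$ were a lattice point, the congruence $(n,k)\mid p_2$ for every $p=(p_1,p_2,p_3)\in M_{f_1}$ --- deduced exactly as you do from $\gcd(k,m,n)=1$ --- forces $s=0$ and then $t=0$ by primitivity of $e_2$. You instead justify the identification $M_{f_1}=\Z^3\cap\{kx+my+nz=0\}$ explicitly (via $M_P=\Z^3$ and the index-one statement of Lemma~\ref{index1}, a step the paper leaves implicit) and then replace the fundamental-domain check by the determinant criterion for the saturated lattice $\nu^\perp\cap\Z^3$, $\nu=(k,m,n)$: the computation $w\times e_2=(k,m,n)$, which is exactly the defining relation $ak+cn=-m(n,k)$, settles both independence and the basis property at once. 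Your route buys a cleaner, more mechanical verification at the cost of invoking the standard facts about saturated sublattices being direct summands and the cross-product/covolume criterion; the paper's route is more self-contained, using only its own Lemma~\ref{basis} plus the single congruence. Your closing ``divisibility chase'' is essentially the paper's argument in different clothing, so the two proofs converge there.
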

\begin{proof}
It is easily verified that $M_{f_1}$ consists of all lattice points $(x,y,z)$ satisfying
\[ kx+my+nz=d, \]
hence $w$ is a vector in $M_{f_1}$. We will apply Lemma \ref{basis} to show that $\{w,e_2\}$ is a basis for $M_{f_1}$.

First we claim that for any $(a,b,c)$ in $M_{f_1}$ we must have
\[ b \equiv 0 \pmod{(n,k)}. \]
Indeed, $mb = d-ak-cn$ is congruent to $0$ modulo $(n,k)$, and since $\gcd(k,m,n)=1$ we must have $b$ congruent to $0$ modulo $(n,k)$.

Assume now that $sw + te_2$, $0 \leq s,t < 1$ is a point in $M_{f_1}$. By the above claim we must have $s=0$. But then also $t=0$, hence we are done.
\end{proof}

It will be convenient to choose a particular basis corresponding to the pair $(a,c)$ from Lemma  \ref{basisFacet}, hence we require that $c$ is the minimal non--negative number satisfying $ak+cn = -m(n,k)$, for some $a$. Dividing by $(k,n)(k,m)(m,n)$ and considering this$\pmod{\frac{k}{(n,k)(m,k)}}$ it is clear that this $c$ satisfies $c(n,k) < k$. Then since
\[\begin{pmatrix} -\frac{m}{(m,k)} \\ \frac{k}{(m,k)} \\ 0 \end{pmatrix}=-\frac{c}{(m,k)}\begin{pmatrix} -\frac{n}{(n,k)} \\ 0 \\ \frac{k}{(n,k)} \end{pmatrix} +\frac{k}{(m,k)(n,k)} \begin{pmatrix} a \\ (n,k) \\ c \end{pmatrix},   \]
 and $0 < k-c(n,k) < k$, $f_1$ is a $(\frac{k}{(m,k)(n,k)},\frac{k-c(n,k)}{(m,k)(n,k)})$-cone. From this we can compute the terms $\RSV_\Z(f_i,v_1)$  using HJ-fractions.

For the Euler-obstruction of the edges, we have $\Eu(e_1)=1$ since the cone generated by the image of the two other vectors in $\Z^3/e_1\Z$ is smooth.

To calculate $Eu(e_2)$, set $a=\frac{n}{(n,k)}, b=\frac{k}{(n,k)}$. Choose integers such that $ea+fb=1$. Then the following will be a basis for $\Z^3$:
\[ v_1 = \begin{pmatrix} 0 \\ 1 \\ 1 \end{pmatrix},v_2= \begin{pmatrix} -f \\ e \\ 0 \end{pmatrix}, v_3=\begin{pmatrix} -a \\ 0 \\ b \end{pmatrix}, \]
Since $e_1=bev_1+bv_2+ev_3$, the image in the quotient lattice $\Z^3/e_2$ is $(be,b)$. 

Setting $c=\frac{m}{(m,k)}, d=\frac{k}{(m,k)}$, we have $ e_2=(fbd-bcd)v_1 + (ad+bc)v_2 + (ce-fd)v_3$. In the quotient this is $(fbd-bce,ad+bc)$.

Writing out the details and cancelling common factors (to get primitive vectors) we get that the cone with apex $0$ generated by the image of $P$ is $\Cone((fk-em, n+m),(e,1))$. Now since 
\[ \begin{pmatrix} fk-em \\ n+m \end{pmatrix} = (n+m) \begin{pmatrix} e \\ 1 \end{pmatrix} + (n,k) \begin{pmatrix} 1 \\ 0 \end{pmatrix}, \]
 we get a $((n,k),m \pmod{(n,k)})-$cone.

Similarly for $\Eu(e_3)$ we get a $((m,k),n \pmod{(m,k)})$-cone. Using this and HJ-fractions we can compute the terms $\Eu(e_i)$. 

\begin{example} \label{61015}
We will apply the above to $\mathbb{P}(1,6,10,15)$. Then $v_0=(0,0,0),v_1=(5,0,0),v_2=(0,3,0),v_3=(0,0,2)$. We will do all the steps of Algorithm \ref{3foldalg}.

We calculate that $V(P)=30$ and that $A(P)=1+15+10+6=32$.

Denote the edge connecting $v_i$ and $v_j$ by $e_{ij}$. Denote the facets containing $v_i,v_j,v_k$ by $f_{ijk}$. Then
\[ L(e_{01})=5,L(e_{02})=3,L(e_{03})=2,L(e_{12})=1,L(e_{1,3})=1,L(e_{2,3})=1. \]

Applying the discussion above we can further conclude the following
\[ \Eu(e_{0i})=1 \text{ for } i=1,2,3, \Eu(e_{12})=0, \Eu(e_{13})=-1, \Eu(e_{23})=-3. \]

We know that $\RSV(P,v_0)=1$. Using Macaulay2 we calculate that 
\[ \RSV_\Z(P,v_1)=4, \RSV_\Z (P,v_2)=6, \RSV(P,v_2)=7. \]

For a fixed vertex $v_i$ and a facet $f$ containing it we will then write the cone with apex $v_i$ generated by edges of the facet $f$ as a $(d,k)$-cone.
\begin{equation*}
\begin{array}{llll}
  \text{vertex} & \text{facet} &  \text{corresponding } (d,k)  \\
  \hline
v_1 & f_{123} & (1,0) \\ v_1 & f_{012} & (2,1) \\ v_1 & f_{013} & (3,2) \\
v_2 & f_{123} & (1,0) \\ v_2 & f_{012} & (2,1) \\ v_2 & f_{023} & (5,3) \\
v_3 & f_{123} & (1,0) \\ v_3 & f_{013} & (3,1) \\ v_3 & f_{023} & (5,2) \\
\end{array}
\end{equation*}
Then we have that
\[ \Eu(v_0)=1, \Eu(v_1)=-1, \Eu(v_2)= -2, \Eu(v_3)=-2. \]
Thus we can in turn conclude that 
\[ \deg \p(1,6,10,15)^\vee = 4 \cdot 30 - 3 \cdot 32 + 2(5+3+2-4)-(1-1-2-2)=40. \]
\end{example}

\subsection{Isolated singularities}

If we assume the variety $X_{P \cap M}$ has only isolated singularities we know that $\Eu(e) = 1$ for every edge. Thus we can reduce to
\[ \deg X_{P \cap M}^\vee = 4\Vol(P) - 3A(P) + 2  E(P) - \sum_{v \in P} \Eu(v), \]
where $A(P)$ is the sum of areas of facets of $P$, while $E(P)$ is the sum of lengths of edges of $P$. For a singular point $v$ associated to a vertex of $P$
\begin{equation} \label{isoEu} \Eu(v)=  \RSV_\Z(P,v)  - \sum_{v \preceq f, \dim f = 2} \RSV_\Z(f,v) + e, \end{equation}
where $e$ is the number of edges of $P$ containing $v$.

We need a generalization of Pick's formula to estimate the volume $\RSV_\Z(P,v)$. To do this we make the following definitions:

\begin{definition}
A piecewise linear lattice polygon (pllp) $K$ is a union  $\cup_{i=1}^n K_i$ of some facets of a 3-dimensional convex lattice polytope $P$ which is contractible and connected in codimension one,  meaning that for any pair $K_i,K_j$ there is a chain $K_i = K_{l_1},...,K_{l_s}=K_j$ such that $K_{l_r}$ and $K_{l_{r+1}}$ has $1$-dimensional intersection, for $1 \leq r \leq s-1$.

A lattice point $x$ in $K$ is a boundary point if it is also contained in some facet $F$ of $P$ which is not contained in $K$. If $x$ is not a boundary lattice point, then it is an internal lattice point.
\end{definition}

\begin{proposition}[Generalized Pick's formula]
For a pllp $K = \cup_{i=1}^n K_i$, let $K_i$ be contained in the plane $H_i$. Let $A_i$ be the area of $K_i$, normalized with respect to the lattice generated by lattice points in $H_i$. Then the normalized area of $K$, defined as $A_K \defeq \sum_{i=1}^n A_i$, equals $2i+b-2$, where $b$ is the number of boundary lattice points, and $i$ is the number of internal lattice points.
\end{proposition}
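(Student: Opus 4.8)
The plan is to reduce the statement to counting the triangles of a unimodular triangulation of $K$ and then to extract that count from the Euler characteristic $\chi(K)=1$.

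First I would triangulate each lattice polygon $K_i$, inside the plane $H_i$ and with respect to the lattice generated by the lattice points of $H_i$, into unimodular triangles of normalized area $1$, using as vertices exactly the lattice points of $K_i$; such a triangulation exists by the classical theory of lattice polygons (refine any triangulation until every triangle is unimodular, and then every lattice point of $K_i$ must occur as a vertex). Since two distinct facets of the convex polytope $P$ meet in a common face — empty, a vertex, or an edge — and along a shared edge both triangulations subdivide precisely at the lattice points of that edge, these triangulations patch together into a genuine simplicial complex $\mathcal T$ triangulating $K$. Each unimodular triangle contributes $1$ to the relevant normalized area, so $A_K=\sum_{i=1}^n A_i$ equals $T$, the number of $2$-simplices of $\mathcal T$. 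Write $V$ for the number of vertices of $\mathcal T$, so that $V$ equals the number of lattice points of $K$, which is $i+b$, and write $E$ for the number of edges.

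Next I would use that $K$ is a PL $2$-disk: it is a subcomplex of $\partial P\cong S^2$, and since it is contractible, connected, and connected in codimension one, Alexander duality (or direct separation arguments in the sphere) shows that $S^2\setminus K$ is connected with trivial $H_1$, hence an open disk, so PL Schoenflies makes $K$ a PL disk. In particular $\chi(K)=V-E+T=1$, the boundary $\partial K$ is a circle, and each edge of $\mathcal T$ lies in two triangles (interior edges) or in exactly one (the edges contained in $\partial K$). Counting edge--triangle incidences gives $3T=2(E-E_\partial)+E_\partial$, where $E_\partial$ is the number of boundary edges; combining this with the Euler relation and eliminating $E$ yields
\[ T = 2V - E_\partial - 2. \]
It remains to identify $E_\partial$ with $b$. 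On the circle $\partial K$ the number of edges equals the number of vertices, so $E_\partial$ counts the lattice points of $K$ lying on $\partial K$, and I claim these are exactly the boundary lattice points of the definition: if $x\in K$ lies on a facet $F$ of $P$ with $F\not\subset K$, then $F\cap K$ is a union of common faces of $F$ with other facets, hence at most $1$-dimensional, so $F$ leaves $K$ in every neighbourhood of $x$ and $x\in\partial K$; conversely, if every facet through $x$ lies in $K$, those facets cover a neighbourhood of $x$ in $\partial P$, so $x$ is interior to $K$. Thus $E_\partial=b$ and $A_K=T=2(i+b)-b-2=2i+b-2$.

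I expect the main obstacle to be the topology in the third paragraph: one must be sure the combinatorial hypotheses on $K$ (contractible and connected in codimension one, as a subcomplex of $\partial P$) genuinely force $K$ to be a PL $2$-manifold with boundary, excluding "pinch" configurations at vertices of $P$, so that $\partial K$ is a circle and the local interior/boundary analysis above is valid. Everything else — the existence of unimodular triangulations, the area bookkeeping, and the Euler-characteristic count — is routine. (An alternative route avoiding explicit triangulations is induction on $n$: peel off one facet $K_n$ so that $K_n\cap(K_1\cup\dots\cup K_{n-1})$ is a connected lattice arc, apply ordinary Pick on $K_n$ and the inductive hypothesis on $K_1\cup\dots\cup K_{n-1}$, and track how the lattice points on the gluing arc get reclassified from boundary to interior; this replaces the topological point by a shelling-type lemma for disks, which is the same difficulty in a different guise.)
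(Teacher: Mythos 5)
Your argument is correct, but it takes a genuinely different route from the paper. The paper proves the formula by induction on the number of facets: it peels off one facet $K_n$ (chosen so that $K'=\cup_{i<n}K_i$ is again a pllp), applies the ordinary planar Pick formula to $K_n$ and the inductive hypothesis to $K'$, and then carefully tracks how the lattice points on the gluing locus are reclassified from boundary to interior; the whole inductive step reduces to showing that exactly two of the boundary points of $K_n$ lying on $K'$ remain boundary points of $K$, which the paper deduces from connectedness of $P\setminus K$ and of the boundary of $K'$. This is exactly the ``alternative route'' you sketch in your last sentence. Your main proof instead globalizes: a full unimodular triangulation of each facet, glued along shared edges into a triangulation $\mathcal T$ of $K$, together with $\chi(K)=V-E+T=1$ and the incidence count $3T=2E-E_\partial$, gives $T=2V-E_\partial-2$ in one stroke, and the result follows from $A_K=T$, $V=i+b$, $E_\partial=b$. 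What your approach buys is that it isolates the combinatorics (which is completely mechanical) from the topology, and it avoids the shelling-type ``WLOG $K'$ is again a pllp'' step that the paper's induction quietly assumes; what the paper's approach buys is that it never needs to produce a global triangulation and stays entirely within elementary planar Pick bookkeeping.

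Both proofs hinge on the same topological fact, which you correctly identify as the crux: the hypotheses must force $K$ to be a $2$-disk with circle boundary (equivalently, must exclude pinch vertices). This is not a gap relative to the paper's own standard of rigour --- the paper's ``$s=2$'' step needs precisely the same fact --- but it is worth recording why it holds: if $v$ were a pinch vertex, i.e.\ the link of $v$ in $K$ were disconnected, then a Jordan curve through $v$ and otherwise contained in $S^2\setminus K$ (which is connected by Alexander duality, since $K$ is contractible) would separate the facets of $K$ meeting $v$ into two nonempty groups whose only common point is $v$; any chain of facets joining the two groups would then have some consecutive pair meeting only in $\{v\}$, contradicting connectedness in codimension one. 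With pinches excluded, $K$ is a compact surface with boundary in $S^2$, contractible, hence a disk, and your identification $E_\partial=b$ (and the paper's $s=2$) goes through. Note that the manifold condition really is needed and not just a technicality: for a pinched complex one still has $T=2V-E_\partial-2$, but $E_\partial$ exceeds the number of boundary lattice points by the number of pinches, so the stated formula would fail.
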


\begin{proof}
We do induction on $n$. If $n=1$ this is just the usual Pick's formula in the plane. Assume we have showed the proposition for $n-1$, and let $K=\cup_{i=1}^n K_i$. We have $A_K = A_{K_n} + A_{K'}$ where $K'=\cup_{i=1}^{n-1} K_i$. Without loss of generality we may assume that we have chosen $K_n$ such that $K'$ is a pllp. Let $i',b'$ be the internal and boundary lattice points of $K'$ respectively. By the inductive hypothesis we have
\[ A_{K'} = 2i' + b' - 2, \]
and by Pick's formula in the plane we have
\[ A_{K_n} = 2i_n + b_n -2, \]
where $i_n, b_n$ are internal and boundary lattice points of $K_n$.
Now we have to compute $i$ and $b$. The boundary points of $K'$ which intersect $K_n$ either are internal in $K$ (call the number of such $k$) or remain boundary points in $K$ (call the number of such $s$). If we let $l$ be the number of boundary points of $K_n$ not in any $K_i$, $i \neq n$, then we have 
\begin{align*} b &= b' -k +l, \\
i &= i' +i_n +k . \end{align*}
Then we get
\[ 2i +b -2 = 2i' + 2i_n + 2k +b' -k + l -2 = A_{K'} + 2i_n + k + l. \]
Thus if we can show that $b_n-2 = k+l$ we are done. By construction $b_n = k+l+s$, hence we need to show that $s=2$:

Consider the set $S=P \setminus K$ where $P$ is the ambient polytope $\Conv(K)$. If $S$ is  nonempty and not connected, then it is clear that $K$ cannot be contractible. Thus we have that $S$ is connected. Then the boundary of $K$ is  $S$ intersected with $K$, which again has to be connected. Now if $s>2$ we have that the boundary of $K$ intersected with $K_n$ cannot be connected. But this implies that the boundary of $K'$ cannot be connected, which contradicts it being a pllp.
\end{proof}

When attempting to compute $\RSV_\Z(\sigma^\vee,v)$ for the vertex of a $3$-dimensional cone $\sigma$, there naturally arises a pllp : Let $K$ be the union of the compact faces of the convex hull of the set $ (\sigma^\vee \setminus \{ v \}) \cap M$. It is a pllp whose ambient polytope is the convex hull of $K$.

\begin{proposition}
For an isolated singular point $v$ on a toric $3$-fold $X_{P \cap M}$ we always have $\Eu(v) \geq 1$.
\end{proposition}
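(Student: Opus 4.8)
The plan is to use the formula \eqref{isoEu} for $\Eu(v)$ at an isolated singular vertex,
\[ \Eu(v) = \RSV_\Z(P,v) - \sum_{v \preceq f,\ \dim f = 2} \RSV_\Z(f,v) + e, \]
and to estimate the terms using the generalized Pick's formula. First I would localize: since the Euler obstruction is a local invariant, I may replace $P$ by the cone $\sigma^\vee$ with apex $v$, so $\RSV_\Z(P,v) = \Vol(\sigma^\vee \setminus \Conv((\sigma^\vee \setminus \{v\}) \cap M))$ (normalized $3$-dimensional volume), and each $\RSV_\Z(f,v)$ is the corresponding $2$-dimensional relative subdiagram volume along the facet $f$. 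Let $K$ be the pllp formed by the union of the compact faces of $\Conv((\sigma^\vee \setminus \{v\}) \cap M)$, as in the paragraph preceding the statement. The facets of $\sigma^\vee$ through $v$ are in bijection with the edges of $\sigma^\vee$ through $v$ (since $\dim\sigma^\vee = 3$), so there are exactly $e$ of them, and I would set up a bookkeeping that reads off $\RSV_\Z(f,v)$ for each such $f$ from the boundary edge of $K$ lying in the plane $L(f)$.

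The key step is to compare the $3$-dimensional volume $\RSV_\Z(P,v)$ with the sum of the $2$-dimensional quantities. Write $\RSV_\Z(P,v) = \Vol(\sigma^\vee) - \Vol(\Conv((\sigma^\vee\setminus\{v\})\cap M))$; the region removed is exactly the ``pyramid minus solid under $K$'' region, which is bounded by $K$ on one side and by pieces of the facets of $\sigma^\vee$ together with the apex $v$ on the other. I would apply the generalized Pick's formula to $K$: $A_K = 2i_K + b_K - 2$, where $i_K, b_K$ count internal and boundary lattice points of $K$. Separately, for each facet $f$ through $v$, the surface-case analysis (Lemma~\ref{euob} and the discussion around it) gives $\RSV_\Z(f,v) = 1 + c_f$ where $c_f$ is the number of lattice points interior to the $2$-dimensional cone $f$ that lie on the boundary polygonal line; these are precisely the boundary lattice points of $K$ lying in the relative interior of each planar piece, so $\sum_f \RSV_\Z(f,v) = e + \sum_f c_f$ and $\sum_f c_f$ counts boundary points of $K$ not equal to the $e$ primitive ray generators. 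Thus $\Eu(v) = \RSV_\Z(P,v) - \sum_f c_f$, and it remains to show $\RSV_\Z(P,v) \ge \sum_f c_f$, i.e. that the removed solid volume is at least the number of ``extra'' boundary points of $K$ on the facet boundaries.

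For the final inequality I would argue that each such boundary lattice point $p$ of $K$ (lying in the relative interior of a boundary edge of $K$, hence on a facet $f$ of $\sigma^\vee$) contributes at least one unit of normalized volume to the removed region: the segment from $v$ to $p$, together with a neighboring lattice point of $K$, spans a lattice tetrahedron of positive volume contained in the removed solid, and distinct $p$'s can be made to give tetrahedra with disjoint interiors by ordering the facets cyclically around $v$ and assigning to each $p$ the tetrahedron on one fixed side. Alternatively, and perhaps more cleanly, I would use the generalized Pick formula twice — once for $K$ and once for the ``outer'' boundary pllp consisting of the facets of $\sigma^\vee$ truncated at $K$ — plus a three-dimensional Ehrhart/Pick-type count for the removed solid, to get an exact expression $\RSV_\Z(P,v) = (\text{interior lattice points of the removed solid not on } K) + \text{something} + \sum_f c_f + \cdots$, from which $\Eu(v) \ge 1$ drops out because the removed solid always contains at least the apex's neighborhood contributing the leading $+1$. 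The main obstacle I anticipate is precisely making the disjointness/accounting in this last step rigorous: three-dimensional subdiagram volumes have no clean closed form, so I would need a careful combinatorial decomposition of the removed solid that simultaneously (a) accounts for each facet's $c_f$ boundary points and (b) leaves at least one extra unit of volume unaccounted, and checking that no double-counting occurs along the shared edges of the facets through $v$ is the delicate part.
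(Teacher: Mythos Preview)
Your setup is exactly the paper's: localize to the cone $\sigma^\vee$, introduce the pllp $K$ of compact faces of $\Conv((\sigma^\vee\setminus\{v\})\cap M)$, identify $\sum_f \RSV_\Z(f,v)$ with the number $b_K$ of boundary lattice points of $K$ (your $e+\sum_f c_f$), and quote the generalized Pick formula $A_K = 2i_K + b_K - 2$. Up to this point you and the paper agree verbatim.

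The gap is in your final step. Having computed $A_K$, you never use it; instead you try to bound $\RSV_\Z(P,v)$ from below by packing disjoint unit tetrahedra indexed by the $c_f$ boundary points, and you correctly flag that the disjointness bookkeeping is delicate and not obviously complete. The paper bypasses all of this with the single inequality
\[
\RSV_\Z(P,v)\ \ge\ A_K,
\]
which follows because the removed region is the union over the compact faces $K_i$ of the lattice pyramid with apex $v$ and base $K_i$; each $K_i$ sits at integer distance $\ge 1$ from $v$, so its pyramid has normalized $3$-volume at least the normalized area of $K_i$. Summing gives $\RSV_\Z(P,v)\ge \sum_i \Vol(K_i)=A_K$. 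Plugging in generalized Pick and $e\ge 3$ yields
\[
\Eu(v)=\RSV_\Z(P,v)-b_K+e\ \ge\ (2i_K+b_K-2)-b_K+e\ =\ 2i_K+e-2\ \ge\ 2i_K+1\ \ge 1.
\]
So the missing idea is not a new decomposition but simply to compare the solid subdiagram volume with the \emph{area} of its bounding pllp, which you had already computed. Your tetrahedra argument, even if it could be made rigorous, would only recover the weaker bound $\RSV_\Z(P,v)\ge b_K-e$ (plus one stray unit), whereas the area bound gives the stronger $2i_K+b_K-2$ for free and immediately explains the equality case the paper records afterwards (namely $e=3$, $i_K=0$, and all integer distances equal to $1$).
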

\begin{proof}
Consider the cone $\sigma^\vee$ generated by $P$ with apex $v$. Let $e$ be he number of rays of $\sigma^\vee$ which is always $\geq 3$. Let $K$ be the  pllp associated to $\sigma^\vee$. By Construction \ref{conres} we see that $\sum_{v \preceq f, \dim f = 2} \RSV_\Z(f,v)$ equals the number of boundary points of $K$. By Pick's formula the area of K is $2i+b-2$ where $i$ is the number internal lattice points of $K$. Since $\RSV_Z(P,v) \geq A_K = 2i +b-2$ we get
\[ \Eu(v)=  \RSV_\Z(P,v) + e - b \geq 2i +b-2+3-b=2i+1 \geq 1. \]
\end{proof}
In Example \ref{61015} we saw that this is not true for non-isolated singularities. Observe also that by the proof the only way one can have $\Eu(v)=1$ is if there are just $3$ edges emanating from $v$.

\begin{corollary}
For an isolated singular point $v$ on a toric $3$-fold $X_{P \cap M}$ one has $\Eu(v) = 1$ if and only if (i) there are exactly $3$ edges emanating from $v$, (ii) the associated pllp $K = \cup_{i=1}^n K_i$ has no internal lattice points, and (iii) for each plane $H_i$ containing $K_i$, the integer distance from $H_i$ to the origin equals $1$.
\end{corollary}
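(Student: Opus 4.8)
The plan is to extract the equality case from the chain of inequalities used in the proof of the preceding Proposition. Translate $v$ to the origin and let $\sigma^\vee=\Cone(P\cap M-v)$ be the (full-dimensional, pointed) local cone at $v$; set $C=\Conv\bigl((\sigma^\vee\cap M)\setminus\{0\}\bigr)$ and $R=\sigma^\vee\setminus C$, so that $\RSV_\Z(P,v)$ is the normalized volume of the bounded region $R$, as noted above. Let $K=\bigcup_{i=1}^n K_i$ be the pllp of compact facets of $C$, with $K_i$ lying in $H_i=\{\langle u_i,\cdot\rangle=c_i\}$, $u_i$ the primitive inner normal. Since $K_i$ is compact and the recession cone of $C$ is $\sigma^\vee$, one has $u_i\in\mathrm{int}\,\sigma$, so $\langle u_i,p\rangle$ is a \emph{positive integer} for every lattice point $p\in\sigma^\vee\setminus\{0\}$; hence $c_i=\min_p\langle u_i,p\rangle\geq1$, and $c_i$ is precisely the integer distance from $H_i$ to the origin. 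Recall from the preceding Proposition that $\Eu(v)=\RSV_\Z(P,v)+e-b$, where $e\geq3$ is the number of edges at $v$ and $b=\sum_{v\preceq f,\,\dim f=2}\RSV_\Z(f,v)$ is the number of boundary lattice points of $K$, while the Generalized Pick's formula gives $\RSV_\Z(P,v)\geq A_K=2i+b-2$ with $i$ the number of internal lattice points of $K$. Then
\[ \Eu(v)=\RSV_\Z(P,v)+e-b\ \geq\ (2i+b-2)+e-b\ =\ 2i+e-2\ \geq\ 1, \]
so $\Eu(v)=1$ if and only if $e=3$, $i=0$ and $\RSV_\Z(P,v)=A_K$. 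Conditions (i) and (ii) are exactly $e=3$ and $i=0$, so the whole statement reduces to showing: given $c_i\geq1$ for all $i$, one has $\RSV_\Z(P,v)=A_K$ if and only if every $c_i=1$, which is (iii).

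For this remaining equivalence I would first show that $R=\{\,tx:x\in K,\ 0\leq t<1\,\}$, i.e.\ that $R$ is, up to measure zero, the disjoint union of the open pyramids with apex the origin over the facets $K_i$. The inclusion ``$\supseteq$'' is immediate, since for $x\in K_i$ and $t<1$ we have $tx\in\sigma^\vee$ while $\langle u_i,tx\rangle=t\,c_i<c_i$ forces $tx\notin C$. For ``$\subseteq$'', since $0\notin C$ every ray from the origin lying in $\mathrm{int}\,\sigma^\vee$ meets $\partial C$ in a unique point, which lies on the part of $\partial C$ visible from the origin; the unbounded facets of $C$ have inner normals on $\partial\sigma$, hence supporting hyperplanes through the origin, so the visible part of $\partial C$ is exactly $K$. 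Granting this, $\Vol(R)=\sum_{i=1}^n\Vol(\mathrm{pyr}_0 K_i)$, and a short determinant computation — expressing the simplex $\Conv(0,p,q,r)$ over a lattice triangle $\Conv(p,q,r)\subseteq K_i$, using that a basis $f_1,f_2$ of $\Lambda_i:=\{w\in M:\langle u_i,w\rangle=0\}$ satisfies $f_1\times f_2=\pm u_i$, and triangulating $K_i$ over $\Lambda_i$ — yields $\Vol(\mathrm{pyr}_0 K_i)=c_i A_i$, where $A_i$ is the area of $K_i$ normalized with respect to $\Lambda_i$, the very lattice used in the Generalized Pick's formula. Hence
\[ \RSV_\Z(P,v)=\Vol(R)=\sum_{i=1}^n c_i A_i\ \geq\ \sum_{i=1}^n A_i=A_K, \]
with equality if and only if $c_i=1$ for all $i$ (each $A_i>0$), i.e.\ if and only if (iii) holds. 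Together with the first paragraph this gives the corollary.

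The hard part is the visibility claim: that the portion of $\partial C$ seen from the apex of $\sigma^\vee$ is exactly the compact staircase $K$, equivalently that $\RSV_\Z(P,v)$ is genuinely accounted for by the pyramids over $K$. I would prove it by noting that an unbounded facet of $C$ must contain a ray of $\sigma^\vee$, which forces its inner normal onto $\partial\sigma$ and hence its supporting hyperplane to pass through the origin, so no such facet is visible from the apex. The rest — the pyramid-volume identity $\Vol(\mathrm{pyr}_0 K_i)=c_iA_i$ — is routine, but one must match the normalization of $A_i$ (with respect to $\Lambda_i$) to the one implicit in the Generalized Pick's formula, otherwise the crucial factor $c_i$ comes out wrong.
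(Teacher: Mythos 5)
Your proposal is correct and follows exactly the route the paper intends: extract the equality case from the chain $\Eu(v)=\RSV_\Z(P,v)+e-b\geq 2i+e-2\geq 1$ in the preceding proposition, so that $\Eu(v)=1$ forces $e=3$, $i=0$, and $\RSV_\Z(P,v)=A_K$. The paper leaves the last equivalence ($\RSV_\Z(P,v)=A_K$ iff all integer distances $c_i$ equal $1$) implicit, and your pyramid decomposition $\RSV_\Z(P,v)=\sum_i c_iA_i$, together with the observation that unbounded facets of $C$ have supporting hyperplanes through the origin, supplies that missing detail correctly.
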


The integer distance of a point $v$ and an integer plane $H$ is the index of the lattice generated by vectors joining $v$ and all integer points of $H$, modulo the lattice $M_P$ generated by lattice points of $P$. See \cite[Rmk. 14.8]{Karpenkov} for details.

We will again compute the local Euler obstruction for a $3$-dimensional wps, now with isolated singularities. This assumption  simplifies some of the calculations. By Proposition \ref{singlocus} $\p(1,k,m,n)$ has isolated singularities if and only if $\gcd(m,n)=\gcd(k,n) = \gcd(k,m)=1$. In this case one can calculate that
\[ \Vol(P) = k^2 m^2 n^2, \]
\[ A(P) = kmn+k^2mn+km^2n+kmn^2, \]
\[ E(P) = k+m+n+mn+kn+km.\]
All this is straigtforward, except for the first term of $A(P)$, but this is  \cite[Prop 3.4]{bezout} for a surface of weights $(k,m,n)$.

The vertex $v_0=(0,0,0)$ is smooth, thus $\Eu(v_0)=1$. Since every vertex is contained in $3$ facets, we get for a vertex $v$
\[ \Eu(v)=  \RSV_\Z(P,v) - 3 + \sum_{v \preceq f, \dim f = 2} (2- c_{f,i}).  \]
For the vertex $v_1=(mn,0,0)$, choose $0 < m',n',s <k$ such that
\begin{align*}
 m' \equiv m \pmod{k}, \\
 n' \equiv n \pmod{k},\\
 m+sn \equiv 0 \pmod{k}. 
\end{align*}
Then the $2$-dimensional cones emanating from $v_1$ are $(k,m'),(k,n'),(k,k-s)$-cones. Using HJ-fractions one can then calculate $\Eu(v_1)$. The rest of the vertices are treated similarly.

\begin{example} \label{counter}
Consider $\mathbb{P}(1,2,3,5)$. The polytope $P$ has vertices $v_0=(0,0,0), v_1=(15,0,0), v_2=(0,10,0), v_3=(0,0,6)$. Using Macaulay2  we calculate that 
\begin{align*}
 RSV_\Z(P,v_1)=4,\\
 RSV_\Z(P,v_2)=5,\\
 RSV_\Z(P,v_3)=6. 
\end{align*}
The cones emanating from $v_1$ are all $(2,1)$-cones, thus all $c_{f,1}=2$, hence $\Eu(v_1)=4-3+0=1$.

For $v_2$ we have $(3,2),(3,2),(3,1)$-cones, giving HJ-fractions $[2,2]^-,[2,2]^-,[3]^-$. Hence $\Eu(v_2)=5-3-1=1$.

For $v_3$ we have $(5,2),(5,3),(5,4)$-cones, giving HJ-fractions $[3,2]^-,[2,3]^-,$ \\ $[2,2,2,2]^-$. Hence $\Eu(v_2)=6-3-1-1=1$. We then get:

\[\deg \mathbb{P}(1,2,3,5)^\vee = 4 \cdot 900-3 \cdot 330+2 \cdot 41-4=2688. \]

\end{example}

\begin{remark}

This example is somewhat surprising, as it exhibits a variety with  isolated singularities which has Euler-obstruction constantly equal to $1$. Matsui and Takeuchi \cite{MatTak} shows that for  normal and projective toric surfaces, the Euler-obstruction is constantly equal to $1$ if and only if the variety is smooth.  They conjectured the similar statement in higher dimensions. This is a counterexample to that conjecture. There are also some other examples, see Appendix \ref{data}.
\end{remark}

In the appendix we list some computations done in Macaulay2 for the local Euler obstructions of weigthed projective $3$-folds. It isn't easy to see a clear pattern. This might be analogous to the computations of the Nash blow-up of toric varieties in \cite{nash}, which in principle could be be used to compute the local Euler obstruction. The authors write ``Almost every straightforward conjecture one might make about the patterns in the Nash resolution seems to be false.''

One would have hoped to be able to compute $\RSV_\Z(P,v)$ for a $3$-dimensional polytope in a way similar to the $2$-dimensional case, for instance using some form of generalized theory of multidimensional continued fractions. However little is still known about this. Karpenkov writes ``... with the number of compact faces greater than 1 almost nothing is known'' \cite[p.219]{Karpenkov}, this corresponds to the number of compact polytopes in the pllp .

\section{Dual defective varieties}

For a variety $X \subset \p^N$, one defines the dual defect $\defect X$ of $X$ to be $\defect X = N-1 - \dim X^\vee$ (i.e., $\defect X = 0$ if and only if $X^\vee$ is a hypersurface in ${\p^N}^\vee$). If $\defect X > 0$ we say that $X$ is defective. Using the theory from the previous sections we give a new proof of the well-known result:

\begin{proposition}
The only normal and projective toric surfaces which are defective are those of the form $\p(1,1,n)$.
\end{proposition}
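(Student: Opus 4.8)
The plan is to reduce everything to combinatorics of $P$ via Proposition~\ref{dualformula} and Remark~\ref{defect}: a normal projective toric surface $X_P=X_{P\cap M}$ is defective if and only if
\[ \delta(P)\;\defeq\;3\Vol(P)-2E(P)+\textstyle\sum_{v}\Eu(v)\;=\;0, \]
the sum running over the vertices $v$ of $P$, with $E(P)$ the number of boundary lattice points. For the ``if'' direction set $P_n=\Conv\{(0,0),(0,1),(n,1)\}$; its normal fan has rays $(-1,0),(0,1),(1,-n)$ with $(-1,0)+n(0,1)+(1,-n)=0$, so $X_{P_n}\simeq\p(1,1,n)$. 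Here $\Vol(P_n)=n$, $E(P_n)=n+2$, the vertices $(0,1)$ and $(n,1)$ are smooth, and the cone at $(0,0)$ is an $(n,1)$--cone, whence $\Eu((0,0))=2-n$ by Corollary~\ref{bi}; thus $\delta(P_n)=3n-2(n+2)+(2-n)+1+1=0$, so every $\p(1,1,n)$ is defective.

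For the converse I would compute with Pick's formula. Write $i_P,b_P$ for the numbers of interior and boundary lattice points, so $\Vol(P)=2i_P+b_P-2$ and $E(P)=b_P$, giving $3\Vol(P)-2E(P)=6i_P+b_P-6$. For a vertex $v$ the proof of Lemma~\ref{euob} yields $\Eu(v)=2-\Vol(P)+\Vol(Q_v)$ with $Q_v=\Conv((P\cap M)\setminus\{v\})$. If $Q_v$ is $2$--dimensional then, by Pick again, $\Eu(v)=1-c_v$ with $c_v$ the number of interior lattice points of $P$ on $\partial Q_v$ (so $0\le c_v\le i_P$, and $c_v=0$ if $v$ is smooth). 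If $Q_v$ has dimension $\le1$ then $\Vol(Q_v)=0$; this forces $P$ to be a triangle with apex $v$, with $i_P=0$ and the two edges at $v$ primitive, so $\Eu(v)=2-\Vol(P)=2-\ell$ where $\ell$ is the lattice length of the opposite edge.

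If $P$ is a triangle and some vertex is of this degenerate type, then normalising $v$ at the origin with edge generators $e_1$ and $(p,\ell)$, $0\le p<\ell$, $\gcd(p,\ell)=1$, the condition $i_P=0$ forces $p=1$ (for $p>1$ the point $e_1+e_2$ would be interior), so $P$ is lattice equivalent to $P_\ell$ and $\delta(P)=0$. Otherwise every vertex is of the first type, $\sum_v c_v\le 3i_P$, and $\delta(P)=6i_P+b_P-3-\sum_v c_v\ge 3i_P+b_P-3$, which is positive unless $i_P=0$ and $b_P=3$, i.e.\ unless $P$ is the unit triangle $P_1$. So for triangles, $\delta(P)=0$ exactly when $P\simeq P_n$.

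It remains to treat $P$ with $V\ge4$ vertices. Then no $Q_v$ drops dimension (three vertices of a convex polygon are never collinear), so $\Eu(v)=1-c_v$ for all $v$ and $\delta(P)=6i_P+b_P+V-6-\sum_v c_v$. The crux — and the step I expect to be the main obstacle — is the estimate $\sum_v c_v\le 2i_P$ here: an interior lattice point $p$ lies on $\partial Q_v$ precisely when some line through $p$ strictly separates $v$ from every other lattice point of $P$; if this happened for three distinct vertices one checks (using that the outward normals of the three resulting half--planes must positively span the plane) that the intersection of these half--planes is just $\{p\}$, whence $P\cap M=\{v_1,v_2,v_3,p\}$ and $P$ is a triangle, contrary to $V\ge4$. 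So each interior point is cut off by at most two vertices, $\sum_v c_v\le2i_P$, and $\delta(P)\ge4i_P+b_P+V-6\ge2>0$: $X_P$ is not defective. (One could instead run the $V\ge4$ case through the minimal resolution $\tilde X\to X_P$, using $\Eu(v)=\sum_i(2+E_{v,i}^2)$ together with the identity $\sum(\text{boundary self-intersections of }\tilde X)=12-3\,\#(\text{boundary divisors})$.) The ``if'' direction and the triangle case are routine Pick's--formula and continued--fraction bookkeeping; controlling $\sum_v c_v$ when $V\ge4$ is where the real work lies.
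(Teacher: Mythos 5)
Your reduction to $\delta(P)=0$, the ``if'' direction, and the triangle case are all sound; in fact your separate treatment of the degenerate case where $\Conv((P\cap M)\setminus\{v\})$ drops dimension is a point where you are \emph{more} careful than a literal application of Lemma~\ref{euob} (whose Pick's-formula proof tacitly assumes that hull is $2$-dimensional, and which would otherwise return $\Eu(v)=1$ for the singular vertex of $\p(1,1,n)$). The gap is exactly where you predicted it: the estimate $\sum_v c_v\le 2i_P$ for $V\ge4$ is false, and so is the separation argument offered for it. Take $P=\Conv\bigl((0,0),(2,1),(3,3),(1,2)\bigr)$, with interior lattice points $(1,1)$ and $(2,2)$. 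The point $p=(1,1)$ lies on $\partial Q_v$ for \emph{three} vertices: it is a vertex of $Q_{(0,0)}$ (supporting line $x+y=2$), and it lies on the boundary edge $[(0,0),(3,3)]$ of both $Q_{(2,1)}$ (which is contained in $\{y\ge x\}$) and $Q_{(1,2)}$ (contained in $\{y\le x\}$). Here two of your three half-planes are complementary, their boundaries being the common line $y=x$ through the collinear lattice points $(0,0),(1,1),(2,2),(3,3)$; the three outward normals $(-1,-1),(1,-1),(-1,1)$ do \emph{not} positively span the plane, and $H_1\cap H_2\cap H_3$ is the ray $\{(t,t):t\ge1\}$, not $\{p\}$. (Note also that your characterisation ``$p\in\partial Q_v$ iff some line through $p$ \emph{strictly} separates $v$ from every other lattice point'' is too strong: the correct criterion puts the other lattice points in the \emph{closed} complementary half-plane, which is what happens here.) By symmetry $(2,2)$ is likewise cut off by three vertices, so $\sum_v c_v=1+2+1+2=6>4=2i_P$, and your claimed bound $\delta(P)\ge 4i_P+b_P+V-6=10$ fails: one computes $\Eu$ at the four vertices to be $0,-1,0,-1$ and $\delta(P)=3\cdot6-2\cdot4-2=8$.

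The conclusion $\delta(P)>0$ of course still holds in this example, and the proposition itself is true, but your argument for $V\ge4$ does not close it; a weaker but provable bound (e.g.\ $\sum_v c_v\le 3i_P$, which already gives $\delta\ge b_P+V-6\ge2$ for $V\ge4$, if one can rule out an interior point being cut off by four or more vertices) or a genuinely different argument is needed. For what it is worth, the proof in the text runs the same step through the triangles $T_j=\Conv(v_{j-1},v_j,v_{j+1})$ and the claim that an interior lattice point lies in at most two of them, i.e.\ $\sum_j i_j\le 2i_P$; the same quadrilateral defeats that claim as well ($(1,1)$ lies in $T_{(0,0)}$, $T_{(2,1)}$ and $T_{(1,2)}$, and $\sum_j i_j=6$). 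So you have correctly identified, and reproduced, the one step of this proposition that genuinely requires more work than either write-up supplies.
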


First we prove an easier result:

\begin{lemma}
The only normal and projective toric surfaces associated to a triangle $P$, which are defective, are those of the form $\p(1,1,n)$.
\end{lemma}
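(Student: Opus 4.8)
The strategy is to use Proposition~\ref{dualformula} (Matsui--Takeuchi's degree formula) together with the surface formula $\deg X_P^\vee = 3\Vol(P) - 2E(P) + \sum_{v}\Eu(v)$ derived in Section~5, and to show that for a triangle $P$ this quantity vanishes only when $P$ defines $\p(1,1,n)$. By Remark~\ref{defect}, $X_P$ is defective precisely when $\delta \defeq 3\Vol(P) - 2E(P) + \sum_v \Eu(v) = 0$. Write the three edges of the triangle $P$ as having normalized lengths $\ell_1,\ell_2,\ell_3$, so $E(P) = \ell_1+\ell_2+\ell_3$. For each vertex $v_i$, Lemma~\ref{euob} (or Corollary~\ref{bi}) gives $\Eu(v_i) \le 1$, with equality iff the cone $\sigma_i^\vee$ at $v_i$ is smooth.

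**Key steps.** First, I would normalize: after an $\mathrm{GL}$-transformation and scaling we may take $P$ to be the triangle with vertices $0$, $\ell_1 e_1$ along one edge; in fact a $2$-dimensional lattice triangle is up to equivalence determined by data equivalent to a weighted projective plane $\p(q_0,q_1,q_2)$ precisely when... — but more robustly, I would just argue directly with $\Vol$, $E$, and the $\Eu(v_i)$. The engine is a lower bound for $\Vol(P)$ in terms of the edge lengths and a compensating upper bound $\sum_i\Eu(v_i)\le 3$. Concretely, if the triangle has edge lengths $\ell_1,\ell_2,\ell_3$ then $\Vol(P) \ge$ (something like) $\max_i \ell_i$, and more usefully one can bound $\Vol(P)$ below using two edges meeting at a vertex: if $e,e'$ meet at $v$ with lattice lengths $a,b$ then $\Vol(P) \ge ab/(\text{type of the cone at }v)$, while the singularity contribution $1-\Eu(v)$ at that vertex is controlled by the continued fraction of that cone. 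The plan is to show $3\Vol(P) - 2E(P) + \sum_v\Eu(v) \ge 1$ unless $P$ is (equivalent to) the triangle of $\p(1,1,n)$, for which one checks directly: that triangle can be taken as $\Conv\{(0,0),(n,0),(0,1)\}$ (or a scaling), with $\Vol = n$, $E(P) = n + 1 + 1 = n+2$ (the hypotenuse has lattice length $1$), and the two lattice-smooth vertices contribute $\Eu = 1$ each while the apex vertex $v$ with the $(n,n-1)$-type cone has $\Eu(v) = \sum(2-b_i)$ with $\tfrac{n}{1}=[n]^-$ giving $\Eu(v) = 2-n$; hence $\delta = 3n - 2(n+2) + 1 + 1 + (2-n) = 0$, confirming defectiveness. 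For the converse, I would use that $\Eu(v) \le 1$ always and argue that $3\Vol(P) - 2E(P)$ already exceeds $3 - \sum\text{(corrections)}$ unless the combinatorics forces the $\p(1,1,n)$ shape; the case analysis is driven by how many vertices are singular (cone not smooth) and by Pick's formula $\Vol(P) = 2i + b - 2$, which links $\Vol$, the number $b$ of boundary points, and the interior count $i\ge 0$.

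**Main obstacle.** The delicate part is the converse: ruling out all triangles except those giving $\p(1,1,n)$. The subtlety is that $\Vol(P)$ can be large while the edge lengths are small (a "thin but tall" triangle), so $3\Vol(P) - 2E(P)$ is large and positive — these are easy; the dangerous cases are triangles where $\Vol(P)$ is comparable to $E(P)$ and where the $\Eu(v_i)$ are very negative, which is exactly when some vertex carries a cone of type $(d,k)$ with a long continued fraction. I expect to handle this by proving an inequality of the form $2\Vol(P) - 2E(P) + \sum_v \Eu(v) \ge$ (nonnegative), isolating the "extra" $\Vol(P)$ as slack; using Pick and the fact that a lattice triangle with a vertex of cone-type $(d,k)$ has $\Vol(P)$ bounded below by $d$ (the normalized volume of $U_\sigma$), while the contribution $2 - \RSV_\Z(P,v) = \sum(2-b_i)$ at that vertex is bounded below by $2 - d$ (since $\RSV_\Z(P,v)\le \Vol(P)$, and more precisely $\le d$ for the relevant sub-triangle). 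Balancing these and checking the boundary case where all inequalities are tight should force $P$ into the $\p(1,1,n)$ family. I would expect to organize this so the equality case of each inequality pins down one feature of $P$ (two smooth vertices, hypotenuse of lattice length $1$, the third cone of type $(n,n-1)$), recovering exactly $\p(1,1,n)$.
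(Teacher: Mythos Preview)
Your setup and the direct verification that $\p(1,1,n)$ is defective are fine, and you correctly identify Pick's formula and the expression $\delta = 3\Vol(P) - 2E(P) + \sum_v \Eu(v)$ as the relevant ingredients. The gap is in the converse: you never produce an inequality that actually closes. The per-vertex bounds you propose, $\RSV_\Z(P,v)\le d$ and $\Vol(P)\ge d$, are true but do not combine across three vertices with different $d$'s to control $\delta$; e.g.\ using $3\Vol(P)\ge d_1+d_2+d_3$ and $\sum_v\RSV_\Z(P,v)\le d_1+d_2+d_3$ only yields $\delta \ge 6 - 2E(P)$, which is useless once $E(P)>3$. Your ``balancing'' paragraph is a plan, not an argument, and the equality-case bookkeeping you sketch is not grounded in an inequality that has been established.

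The paper's proof supplies exactly the missing inequality, and it comes from carrying Pick's formula one step further than you do. Substituting $\Eu(v)=2-\RSV_\Z(P,v)$ and $\Vol(P)=2i+b-2$, $E(P)=b$ into $\delta$ collapses everything to
\[
\delta \;=\; 6i + b - \sum_{v}\RSV_\Z(P,v).
\]
The key estimate is then $\sum_v \RSV_\Z(P,v)\le 3i+b$, proved by a geometric count: for a vertex $v$, the points $A_1,\dots,A_r$ of Construction~\ref{conres} (so $\RSV_\Z(P,v)=r+1$) must each be either an interior lattice point of $P$ or an interior lattice point of the edge opposite $v$, giving $r\le i+b_j-2$ where $b_j$ is the number of lattice points on that opposite edge. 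Summing over the three vertices yields the bound, hence $\delta\ge 3i$. This forces $i=0$ in the defective case, and a short inspection of triangles with $i=0$ shows that $\delta=0$ requires two edges with no interior lattice points, i.e.\ the $\p(1,1,n)$ triangle. The point you are missing is this link between $\RSV_\Z(P,v)$ and the lattice points of $P$ themselves (not just the local cone invariant $d$), which is what makes the three vertex contributions add up against $6i+b$.
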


\begin{proof}
We have by \cite[Thm 1.4]{MatTak} that $\defect X > 0$ if and only if the expression
\begin{equation} \label{de} 3\Vol(P) - 2 E(P) + \sum_{v \text{ vertex } \in P} \Eu(v)  \end{equation}
equals $0$. We have that $E(P)=b$, where $b$ is the number of boundary points of $P$, and letting $i$ be the number of internal lattice points, we have by Picks' formula
\[ \Vol(P) = 2i+b-2. \]
We also have that $\Eu(v) = 2 - \RSV_Z(P,v)$. Thus we get
\[ 3 (2i+b-2) -2b + 6 - \sum_{v \text{ vertex } \in P} \RSV_Z(P,v) \]
\[ = 6i+b - \sum_{v \text{ vertex } \in P} \RSV_Z(P,v). \]
We now claim that 
\[ \sum_{v \text{ vertex } \in P} \RSV_Z(P,v)  \leq 3i+b, \]
which would imply that  \eqref{de} is $\geq 0$ with equality only possible if $i=0$.
To see that the claim is true, let $b_1,b_2,b_3$ be the number of lattice points on the $3$ edges of $P$.
By doing Construction \ref{conres} for a vertex we construct a sequence of points $A_0,...,A_{r+1}$. By the construction we see that each of the points $A_1,...,A_r$ has to be either an inner point of $P$ or an inner point of the edge opposite to the vertex. Then we get $r \leq i+ b_j-2$, thus $\RSV_\Z(P,v) = r+1 \leq i+ b_j- 1$, hence 
\[ \sum_{v \text{ vertex } \in P} \RSV_Z(P,v) \leq \sum_{j=1}^3 i +b_j -1 = 3i +b, \]
proving the claim.

If $i=0$, then we need to check when $b = \sum_{v \text{ vertex } \in P} \RSV_Z(P,v)$. Assuming there are two different edges with internal lattice points, we see by Construction \ref{conres} that $\sum_{v \text{ vertex } \in P} \RSV_Z(P,v)=3$. Hence the only way in which a triangle can satisfy
 \[ 3\Vol(P) - 2 E(P) + \sum_{v \text{ vertex } \in P} \Eu(v) = 0,\]
is if it has two edges with no internal lattice points. After a change of basis this will always be a polytope of the form $\Conv ( (0,0),(n,0),(0,1))$ which is isomorphic to $\p(1,1,n)$. That $\defect \p(1,1,n) > 0$ can be easily calculated from Theorem \ref{alg}. Alternatively this also follows from the fact that all cones have positive defect and $\p(1,1,n)$ is the cone over the $n$-th Veronese embedding of $\p^1$, i.e., the rational normal curve of degree $n$.
\end{proof}

Using this we can prove the general case:

\begin{proof}
Let the polytope have vertices $v_1,...,v_n$, indexed such that $v_j$ is connected to $v_{j-1}$ and $v_{j+1}$ via an edge (take indices modulo $n$ when necessary). To estimate $\RSV_\Z(P,v_j)$ we will consider the triangle $T_j \defeq v_{j-1}v_jv_{j+1}$ . Let $i_j$ be the number of internal lattice points of $P$ contained in $T_j$. By a similar argument as in the previous lemma, by Construction \ref{conres} we have that $RSV_\Z(P,v) \leq i_j + 1$. Since an internal vertex of $P$  at most can be contained in two triangles $T_j$, we get that $\sum_{j=1}^n i_j \leq 2i$. Thus
\[ \sum_{v \text{ vertex } \in P} \RSV_Z(P,v) \leq  \sum_{j=1}^n i_j+1 \leq 2i +n .\]
The expression we wish to consider is 
\[ 3\Vol(P) - 2 E(P) + \sum_{v \text{ vertex } \in P} \Eu(v) \]
\[ = 3(2i+b-2) -2b + 2n -  \sum_{v \text{ vertex } \in P} \RSV_Z(P,v) \]
\[ = 6i+b-6+2n -  \sum_{v \text{ vertex } \in P} \RSV_Z(P,v) \geq 6i+b-6+2n -2i -n = 4i+b+n-6 .\]
This last expression is always greater than $0$ when $n > 3$.
\end{proof}

For $3$-folds it is again more difficult to get general results, however for a subclass of wps we can get similar results:

\begin{proposition}
The only defective $3$-dimensional wps of the form $\p(1,k,m,n)$ with only isolated singularities  are those of the form $\p(1,1,1,n)$.
\end{proposition}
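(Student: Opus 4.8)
The plan is to follow the same strategy used for toric surfaces in the previous two proofs: estimate the problematic volume terms $\RSV_\Z(P,v)$ from above, plug everything into the dual degree formula from Proposition~\ref{dualformula} (in the simplified isolated-singularities form), and show the resulting expression is strictly positive unless the weights force $\p(1,k,m,n)\cong\p(1,1,1,n)$. Concretely, by the isolated-singularities assumption $\gcd(k,m)=\gcd(k,n)=\gcd(m,n)=1$, so I may use the closed formulas $\Vol(P)=k^2m^2n^2$, $A(P)=kmn(1+k+m+n)$, $E(P)=k+m+n+mn+kn+km$ already recorded in the excerpt, together with $\Eu(e)=1$ for every edge. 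The dual degree is then $4\Vol(P)-3A(P)+2E(P)-\sum_v\Eu(v)$, and by Remark~\ref{defect} the variety is defective precisely when this vanishes. Only the four vertices $v_0,\dots,v_3$ contribute, with $v_0$ smooth so $\Eu(v_0)=1$, and for $v_i$ ($i\ge 1$) formula~\eqref{isoEu} gives $\Eu(v_i)=\RSV_\Z(P,v_i)-3+\sum_{f}(2-c_{f,\bullet})\le \RSV_\Z(P,v_i)$ since each $c_{f,j}\ge 2$ (equivalently $\Eu(v_i)=\RSV_\Z(P,v_i)+3-b_i$ where $b_i$ is the number of boundary lattice points of the associated pllp, which is $\ge 3$). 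So an \emph{upper} bound on $\sum_{i=1}^3\RSV_\Z(P,v_i)$ gives an upper bound on $\sum_v\Eu(v)$, hence a lower bound on the dual degree.

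The key step is thus bounding $\RSV_\Z(P,v_i)$ for each weighted vertex. Here I would mimic the surface argument: $\RSV_\Z(P,v_i)$ is the normalized volume of $P$ minus the convex hull of $(P\setminus\{v_i\})\cap M$ near $v_i$, and is controlled by the "staircase" of lattice points cut off near $v_i$. Since $v_1=(mn,0,0)$, $v_2=(0,km,0)$... wait, with the normalization $v_1=(mn\cdot\tfrac{?}{},\dots)$ — using the coordinates $v_1=(\tfrac{d}{k},0,0)$ etc. where $d=\lcm(k,m,n)=kmn$ in the coprime case, so $v_1=(mn,0,0)$, $v_2=(0,kn,0)$... actually $v_2=(0,km,0)$? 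I'd fix coordinates carefully, but the point is that the local cone at $v_i$ is built from three edges whose combinatorics are governed by the $(d,k)$-cone data of the adjacent facets. A clean way to get the bound: the lattice points removed near $v_i$ all lie in a bounded region whose size is linear in the weights, so $\RSV_\Z(P,v_i)$ is at most something like $O(k^2)$ (for $v_1$), and crucially $\sum_{i=1}^3\RSV_\Z(P,v_i)$ is dominated by $\Vol(P)=(kmn)^2$ unless the weights are very small. I expect one can show $\sum_{i=1}^3\RSV_\Z(P,v_i)\le$ (something like) $k^2+m^2+n^2$ or a comparable quadratic expression, and then $4(kmn)^2-3kmn(1+k+m+n)+2E(P)-1-\sum\Eu(v_i)>0$ whenever at least two of $k,m,n$ exceed $1$; a direct check handles the remaining cases $\p(1,1,m,n)$ with $\gcd(m,n)=1$, reducing to $\p(1,1,1,n)$, and one verifies $\defect\p(1,1,1,n)>0$ directly (it is the cone over the Veronese $\p^2\hookrightarrow\p^{\binom{n+2}{2}-1}$, or equivalently computed from Algorithm~\ref{3foldalg}).

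The main obstacle I anticipate is making the upper bound on $\RSV_\Z(P,v_i)$ both explicit and sharp enough. Unlike the surface case, where Construction~\ref{conres} and Hirzebruch--Jung fractions give an exact combinatorial description of $\RSV_\Z(P,v)$, in dimension three there is no such clean formula — the associated pllp can have many compact faces and, as the excerpt itself notes, "almost nothing is known" about the general structure. So I would not try to compute $\RSV_\Z(P,v_i)$ exactly; instead I would bound it crudely by the number of lattice points of $M$ lying in the simplex spanned by $v_i$ and its neighbouring lattice points along the three edges, using that these neighbours are given explicitly (e.g. the primitive vectors $e_1,e_2,e_3$ from Subsection~\ref{poly}). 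That gives a volume bounded by an explicit polynomial in $k,m,n$ of degree strictly less than the degree-$6$ leading term of $\Vol(P)$, which suffices. The bookkeeping — getting the vertex coordinates, the edge lattice lengths, and the three $(d,k)$-cones at each weighted vertex correct, then verifying positivity of the final expression and isolating the boundary cases — is routine but lengthy; the only genuinely delicate point is the combinatorial bound on the local volume defect, and I would be content with a non-optimal but transparent estimate there.
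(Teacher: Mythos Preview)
Your approach is essentially the paper's: bound $\Eu(v_i)\le\RSV_\Z(P,v_i)\le k^2,\,m^2,\,n^2$ respectively, plug into $4\Vol(P)-3A(P)+2E(P)-\sum_v\Eu(v)$, and check positivity by elementary inequalities (the paper orders $k>m>n$, assumes $k\ge 3$, $m\ge 2$, and handles $\p(1,k,2,1)$ separately). The step you flag as ``genuinely delicate'' is in fact a one-line determinant: the region whose volume is $\RSV_\Z(P,v_1)$ is contained in the parallelepiped spanned by the three primitive edge vectors $e_1,e_2,e_3$ at $v_1$ from Subsection~\ref{poly}, and $\bigl|\det(e_1\;e_2\;e_3)\bigr|=k^2$ (with $(m,k)=(n,k)=1$), so no lattice-point counting or pllp analysis is needed.
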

\begin{proof}
As before, by \cite[Thm 1.4]{MatTak} for a toric $3$-fold $X$ with isolated singularities, $\defect X > 0$ if and only if the expression
\begin{equation} \label{eqqq} 4\Vol(P) - 3A(P) + 2  E(P) - \sum_{v \in P} \Eu(v)  \end{equation} 
equals $0$. For $\p(1,k,m,n)$ we have as before
\begin{align*}
 \Vol(P) &= k^2 m^2 n^2 ,\\
 A(P) &= knm(1+k+m+n) ,\\
 E(P) &= k+m+n+mn+kn+km ,
\end{align*}
and for a vertex $v$ of $P$  
\[ \Eu(v) =   \RSV_\Z(P,v) + 3 - \sum_{v \preceq f, \dim f = 2} \RSV_\Z(f,v). \]
We now claim that for the vertex $v_1=(mn,0,0)$, $\Eu(v_1) \leq k^2$.

Indeed, by using the description of $P$ from Section \ref{poly} we have that the volume which equals $\RSV_\Z(P,v)$ is enclosed in a polygon with volume
\[ \det \begin{bmatrix} -1 & -n & -m \\ 0  &0  & k \\ 0  & k & 0 \end{bmatrix} = k^2 .\]
Thus $\RSV_\Z(P,v) \leq k^2$. Also  for any face $f$ containing $v$, $\RSV_\Z(f,v) \geq 1$. Combining this we get
\[ \RSV_\Z(P,v) + 3 - \sum_{v \preceq f, \dim f = 2} \RSV_\Z(f,v) \leq k^2 +3 -3 = k^2. \]
By symmetry we also have $\Eu(v_2) \leq m^2, \Eu(v_3) \leq n^2$. Thus \eqref{eqqq} reduces to
\[ 4 k^2 m^2 n^2 - 3knm(1+k+m+n) +2(k+m+n+mn+kn+km) - \sum_{v \in P} \Eu(v) \]
\[ \geq 4 k^2 m^2 n^2 - 3knm(1+k+m+n) +2(k+m+n+mn+kn+km) - 1 -k^2 -m^2-n^2 .\]
If we are not in the case $\p(1,1,1,n)$, we may assume without loss of generality that $k \geq 3, m \geq 2$ and $k > m > n$. We have that
\begin{align*}
 k^2 m^2 n^2 - 3k m^2 n &= k m^2 n (kn-3) \geq 0, \\
 k^2 m^2 n^2 - 3k m n^2 &= k m n^2 (mk-3) \geq 0, \\
 2 k^2 m^2 n^2 -3 k^2 m n - 3kmn - k^2  &= k(k(mn(2mn-3)-1)-3mn).
\end{align*}
Now unless $m=2$ and $n=1$, we have  $2mn-3 \geq 2$, thus $mn(2mn-3)-1 \geq mn$, implying $k(mn(2mn-3)-1) \geq 3mn$. Hence
\[ 2 k^2 m^2 n^2 -3 k^2 m n - 3kmn - k^2  \geq 0 .\]
Also we have that
\begin{align*}
 kn-n^2 &\geq 0, \\
 km - m^2 &\geq 0. 
\end{align*}
Combining all these we get 
\begin{align*}
 4 k^2 m^2 n^2 - 3knm(1+&k+m+n) +2(k+m+n+mn+kn+km) \\
- 1 -k^2 -m^2-n^2 &\geq 2(k+n+m +mn) +kn+km-1 > 0 .
\end{align*}
One can easily verify that the exception $\p(1,k,2,1)$ has defect $0$.

That $\defect \p(1,1,1,n) >0$ follows from the fact that it is the cone over the n-th Veronese embedding of $\p^2$.
\end{proof}

Using our algorithms for  calculations of degrees of dual varieties, we have checked which wps of the form $\p(1,k,m,n)$ that do not necessarily have isolated singularities, are defective. For $k,m,n \leq 10$ we have computed that the only defective wps of the form $\p(1,k,m,n)$ are  $\p(1,1,1,l), \p(1,1,m,lm), \p(1,k,m,km)$ which are cones over $(\p^2,\mathcal{O}(l)), (\p(1,1,m),\mathcal{O}(l)), (\p(1,k,m),\mathcal{O}(1))$ respectively. Based on the numerical data we conjecture the following.

\begin{conjecture}
The only defective wps are those which are cones over a  wps (not necessarily with reduced weights) of lower dimension.
\end{conjecture}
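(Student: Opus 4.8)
The plan is to attack the conjecture with the same strategy that settled the surface case and the case of $3$-folds with isolated singularities. By Remark~\ref{defect} a weighted projective space $\p(q_0,\dots,q_n)$, with the embedding by $D_P$, is defective exactly when the Matsui--Takeuchi expression
\[ \sum_{Q \preceq P} (-1)^{\codim Q}(\dim Q +1)\,\Eu(Q)\,\Vol(Q) \]
vanishes, where $P$ is the $n$-simplex attached to $\p(q_0,\dots,q_n)$ as in Section~\ref{poly}. So the statement splits into two halves: (i) whenever $X_P$ is a cone over a lower-dimensional wps, this sum is $0$; and (ii) otherwise it is strictly positive. Part (i) is the easy direction and does not really need the formula: if $X_P$ is a cone with vertex a torus-fixed point then $X_P^\vee$ lies in the hyperplane dual to that vertex, so the defect is positive. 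Combinatorially this corresponds to $P$ being a lattice join $\{w\} * P'$, and then the faces through $w$ cancel against the faces of $P'$ in the alternating sum; this is the mechanism behind the already-noted facts that $\p(1,1,n)$ and $\p(1,1,1,n)$ are the cones over the rational normal curve and the Veronese surface.

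For part (ii) I would induct on $n$. Every proper face $Q$ of the simplex $P$ is, up to the sublattice subtleties recorded by the indices $i(\Delta_\alpha,\Delta_\beta)$ (cf.\ Examples~\ref{ind1} and~\ref{ind2}), the polytope of a weighted projective space $\p(q_i : i \in J)$, so by the inductive hypothesis $\Eu(Q)$ is controlled on all faces, and on every face of positive dimension it is bounded above by a quantity polynomial in the weights entering $Q$. The only contributions that can be large and negative are the vertex terms $\Eu(v)$. Writing them as in Algorithm~\ref{3foldalg} and its higher-dimensional analogue,
\[ \Eu(v) = \RSV_\Z(P,v) - \sum_{v \preceq f,\, \dim f = n-1} \RSV_\Z(f,v) + (\text{terms of lower codimension}), \]
the one ingredient with no closed combinatorial formula is $\RSV_\Z(P,v)$, the normalized volume of $P$ minus the convex hull of $(P\setminus\{v\})\cap M$ after translating $v$ to the origin.

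The key estimate to establish is the crude bound
\[ \RSV_\Z(P,v) \le [\,M : \langle b_1,\dots,b_n\rangle\,], \]
where $b_1,\dots,b_n$ are the primitive generators of the edges of $P$ at $v$: the region one removes is contained in the half-open parallelepiped they span, so its normalized volume is at most that index. For a wps simplex this index is a product of the weights of the facets through $v$ --- exactly the computation that produced the bounds $\Eu(v_i)\le k^2,m^2,n^2$ in the $\p(1,k,m,n)$ argument. Since the leading term $(n+1)\Vol(P)$ of the Matsui--Takeuchi sum grows like a strictly higher power of the weights than any vertex contribution, while the intermediate terms $\Eu(Q)\Vol(Q)$ for faces $Q$ of intermediate dimension are controlled by the induction, this should force the whole alternating sum to be strictly positive, with equality only in a boundary regime; the remaining work is to show that in that regime $P$ is forced to be a lattice join, i.e.\ $X_P$ is a cone over a wps.

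The main obstacle --- the reason this remains a conjecture --- is exactly the step the paper already flags: there is no multidimensional Hirzebruch--Jung machinery to evaluate $\RSV_\Z(P,v)$ or the $\RSV_\Z(f,v)$ for $n\ge 3$, since essentially nothing is known about the associated pllp once it has more than one compact face. Hence the parallelepiped bound above must be sharp enough, and one must trace exactly when equality propagates through all the inequalities, to pin down the extremal simplices. A second difficulty is that for non-reduced weights and for wps with non-isolated singularities the positive-dimensional strata no longer have $\Eu\equiv 1$ (Example~\ref{61015}), so the induction must carry genuine information about $\Eu$ on every face simultaneously, and the closing rigidity statement --- that an extremal lattice simplex of this type is necessarily a join --- is the part I expect to be genuinely hard, as it lies beyond what the continued-fraction techniques of this paper reach.
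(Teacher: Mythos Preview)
The statement you are attempting is stated in the paper as an open \emph{conjecture}, supported only by the numerical evidence for $\p(1,k,m,n)$ with $k,m,n\le 10$; the paper offers no proof. So there is nothing in the paper to compare your argument against.

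Your proposal is not a proof either, and to your credit you say so. The easy direction (i) is fine: a wps that is a cone over a lower-dimensional wps is defective for the general reason you give. For the hard direction (ii), the outline you sketch is exactly the natural extrapolation of the methods the paper uses in the proven special cases --- Matsui--Takeuchi positivity, the crude bound $\RSV_\Z(P,v)\le |\det(b_1,\dots,b_n)|$ coming from containment in the simplex on the primitive edge vectors (this is precisely the $\Eu(v_i)\le k^2,m^2,n^2$ estimate in the $\p(1,k,m,n)$ argument), and then domination by the leading volume term. But two steps remain genuinely open, and you identify them correctly: first, for $n\ge 4$ one no longer has Lemma~\ref{index1}, so the indices $i(\Delta_\alpha,\Delta_\beta)$ can be nontrivial and the inductive control of $\Eu(Q)$ on faces is not automatic; second, and more seriously, even granting all the crude inequalities, the rigidity step --- that equality throughout forces $P$ to be a lattice pyramid --- has no known combinatorial mechanism once the pllp at a vertex has more than one compact face, which is exactly the regime the paper flags as ``almost nothing is known''. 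Until those two points are settled, what you have is a plausible plan of attack, not a proof.
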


\section*{Acknowledgements}
This article is partly based on the author's master thesis \cite{biun}. I wish to thank my advisor Ragni Piene for many helpful conversations while writing this article. I also wish to thank my co-advisor John Christian Ottem for helpful comments and discussions as well as the anonymous referee for helpful suggestions and comments, in particular for suggesting the current formulation of Lemma \ref{euob}.

\appendix

\section{Computations} \label{data}

The table below shows weights (W), Euler-obstructions (E1,E2,E3) and $\RSV_\Z(P,v)$ (R1,R2,R3) for $\p(1,k,m,n)$, where $k,m,n \leq 10$ and the singularities are isolated. The computations were done using the Macaulay2 package EulerObstructionWPS which can be found at the author's webpage\cite{EBWPSBIUN}. Note also that the package EDPolytope\cite{EDHS} by Helmer and Sturmfels can in principle calculate the degree of the dual variety of toric varieties $X_A$ of any dimension. However unless $A$ is quite small, their computation will not terminate.
\small{
\begin{verbatim}
          W          E1    E2    E3    R1    R2    R3
    _____________    __    __    __    __    __    __
    1     1     1     1     1     1     1     1     1
    1     1     2     1     1     1     1     1     4
    1     1     3     1     1     3     1     1     9
    1     1     4     1     1     7     1     1    16
    1     1     5     1     1    13     1     1    25
    1     1     6     1     1    21     1     1    36
    1     1     7     1     1    31     1     1    49
    1     1     8     1     1    43     1     1    64
    1     1     9     1     1    57     1     1    81
    1     1    10     1     1    73     1     1   100
    1     2     3     1     1     1     1     4     5
    1     2     5     1     1     5     1     4    13
    1     2     7     1     1    13     1     4    25
    1     2     9     1     1    25     1     4    41
    1     3     4     1     3     1     1     9     6
    1     3     5     1     1     3     1     5    11
    1     3     7     1     3     7     1     9    17
    1     3     8     1     1    11     1     5    24
    1     3    10     1     3    19     1     9    34
    1     4     5     1     7     1     1    16     7
    1     4     7     1     1     7     1     6    19
    1     4     9     1     7     9     1    16    21
    1     5     6     1    13     1     1    25     8
    1     5     7     1     5     3     1    13    13
    1     5     8     1     3     5     1    11    16
    1     5     9     1     1    13     1     7    29
    1     6     7     1    21     1     1    36     9
    1     7     8     1    31     1     1    49    10
    1     7     9     1    13     3     1    25    15
    1     7    10     1     7     7     1    17    22
    1     8     9     1    43     1     1    64    11
    1     9    10     1    57     1     1    81    12
    2     3     5     1     1     1     4     5     6
    2     3     7     1     1     3     4     5    10
    2     5     7     1     3     1     4    11     7
    2     5     9     1     1     5     4     6    15
    2     7     9     1     7     1     4    19     8
    3     4     5     1     1     1     5     6     6
    3     4     7     3     1     1     9     6     7
    3     5     7     1     1     3     5     6    10
    3     5     8     1     5     1     5    13     7
    3     7     8     1     7     1     5    17     7
    3     7    10     3     3     1     9    13     8
    4     5     7     1     1     3     6     6    10
    4     5     9     7     1     1    16     7     8
    4     7     9     1     3     1     6    12     7
    5     6     7     5     1     1    13     8     7
    5     7     8     1     3     1     6    13     7
    5     7     9     1     1     5     6     7    15
    5     8     9     1     5     1     6    16     8
    7     8     9    13     1     1    25    10     8
    7     9    10     3     3     1    10    15     8
\end{verbatim}}

The table below shows weights (W), Euler-obstructions (E1,E2,E3) and $\RSV_\Z(P,v)$ (R1,R2,R3) for $\p(1,k,m,n)$, where $k,m,n \leq 6$, where the singularities are not  isolated.
\small{
\begin{verbatim}
         W         E1    E2    E3    R1    R2    R3
    ___________    __    __    __    __    __    __

    1    2    2     1     0     0     1     2     2
    1    2    4     1     0     2     1     2     8
    1    2    6     1     0     8     1     2    18
    1    3    3     1    -1    -1     1     3     3
    1    3    6     1    -1     3     1     3    12
    1    4    4     1    -2    -2     1     4     4
    1    4    6     1     2     2     1     8    10
    1    5    5     1    -3    -3     1     5     5
    1    6    6     1    -4    -4     1     6     6
    2    2    3     0     0     1     2     2     5
    2    2    5     0     0     3     2     2    11
    2    3    3     1     0     0     4     2     2
    2    3    4     0     1     0     2     5     4
    2    3    6     0     0     1     2     2     6
    2    4    5     0     2     1     2     8     6
    2    5    5     1    -1    -1     4     3     3
    2    5    6     0     5     0     2    13     5
    3    3    4    -1    -1     1     3     3     6
    3    3    5     0     0     5     2     2    13
    3    4    4     3     0     0     9     2     2
    3    4    6    -1     0    -1     3     4     4
    3    5    5     1    -1    -1     5     3     3
    3    5    6     0     3     1     2    11     4
    4    4    5    -2    -2     1     4     4     7
    4    5    5     7     0     0    16     2     2
    4    5    6     2     1     2     8     7     5
    5    5    6    -3    -3     1     5     5     8
    5    6    6    13     0     0    25     2     2
\end{verbatim}}

\bibliography{ref}
\bibliographystyle{plain}
\end{document}